\let\@wraptoccontribs\wraptoccontribs
\title[Algebras from NIM-Reps]{Reconstructing algebra objects from NIM-reps in pointed, near-group and quantum group-like fusion categories}
\date{\today}
\author{Samuel Hannah}
\address{School of Mathematics, Cardiff University, Abacws, Senghennydd Road, Cardiff, CF24 4AG, Wales}
\email{hannahs@cardiff.ac.uk}
\author{Ana Ros Camacho}
\address{School of Mathematics, Cardiff University, Abacws, Senghennydd Road, Cardiff, CF24 4AG, Wales}
\email{roscamachoa@cardiff.ac.uk}
\newcommand{\stkout}[1]{\ifmmode\text{\sout{\ensuremath{#1}}}\else\sout{#1}\fi}
\DeclareMathAlphabet{\cal}{OMS}{zplm}{m}{n}
\newcommand{\coev}{\mathsf{coev}}
\newcommand{\coevr}{\widetilde{\mathsf{coev}}}
\newcommand{\ev}{\mathsf{ev}}
\newcommand{\evr}{\widetilde{\mathsf{ev}}}
\newcommand{\Hom}{\mathsf{Hom}}
\newcommand{\ide}{\mathsf{Id}}
\newcommand{\Ob}{\mathsf{Ob}}
\newcommand{\one}{\mathds{1}}
\newcommand{\rev}{\mathrm{rev}}
\newcommand{\Mod}{\mathsf{Mod}}
\newcommand{\act}{\vartriangleright}
\newcommand{\mZ}{\mathbb{Z}}
\newcommand{\mZZ}{\mathbb{Z}_{\geq 0}}
\newcommand{\cC}{\cal{C}}
\newcommand{\cM}{\cal{M}}
\newcommand{\cZ}{\cal{Z}}
\newtheoremstyle{defstyle}
  {0.5cm} 
  {0.5cm}
  {\normalfont}
  {}  
  {\normalfont\bfseries}
  {:}
  {0.3cm}
  {\thmname{#1}\thmnumber{ #2}\thmnote{ (#3)}}
\numberwithin{equation}{subsection}
\newtheorem*{rep@theorem}{\rep@title}
\newcommand{\newreptheorem}[2]{
\newenvironment{rep#1}[1]{
 \def\rep@title{#2 \ref{##1}}
 \begin{rep@theorem}}
 {\end{rep@theorem}}}
\newtheorem{theorem}{Theorem}[section]
\newtheorem{proposition}[theorem]{Proposition}
\newtheorem{corollary}[theorem]{Corollary}
\newtheorem{lemma}[theorem]{Lemma}
\newtheorem{conjecture}[theorem]{Conjecture}
\newtheorem{theorem*}{Theorem}
\theoremstyle{definition}
\newtheorem{definition}[theorem]{Definition}
\newtheorem{notation}[theorem]{Notation}
\newtheorem{example}[theorem]{Example}
\newtheorem{remark}[theorem]{Remark}
\begin{document}

\maketitle

\begin{abstract}
In this article we study the possible Morita equivalence classes of algebras in three families of fusion categories (pointed, near-group and $A \left( 1,l \right)_{\frac{1}{2}}$) by studying the Non-negative Integer Matrix representations (NIM-reps) of their underlying fusion ring, and compare these results with existing classification results of algebra objects. Also, in an appendix we include a test of the exponents conjecture for modular tensor categories of rank up to 4. 
\end{abstract}


{\footnotesize \tableofcontents}

\section{Introduction}

The study of fusion rings and their associated non-integer matrix representations (or NIM-reps, for short) has stimulated a rich production in the literature, see e.g. \cites{Behrend, DiFZ,EP,GannonNIM,Gannon}. A good reason behind this is the connection of NIM-reps to boundary rational conformal field theory (or D-branes in string theory): in particular, finding NIM-reps is equivalent to solving Cardy's equation in these cases. Some recent papers have continued this work \cites{CRSS,YZL}, showing that this interest keeps up after time.

Given a rational, $c_2$-cofinite vertex operator algebra describing the chiral symmetries of a rational conformal field theory, its category of representations is a modular tensor category \cite{Hua,HLZ}. In this paper we go for a slightly more general setting than modular and focus on the study and classification of NIM-reps for two different families of fusion categories, pointed and near-group. We also include the case of the modular tensor category $A \left( 1,k \right)_{\frac{1}{2}}$, thus covering a certain amount of modular tensor categories known to date (see e.g. \cites{BGNPRW,BNRW,BOM,BPRZ,Br,NRWW,RSW}, more recently \cite{NRW}, and references thereof). Our main motivation for undertaking this work is to test the potential of NIM-reps as a detection tool for families of algebra objects in fusion categories that represent their corresponding module categories. In this way, we generalise the results of \cite{DB} for any pointed fusion category (recovering the expected group algebras as described by \cites{Nat,Ost}), extend our understanding of algebra objects in near-group categories as discussed in \cites{MM,Gal}, and recover the results outlined by \cite{EK} for $\mathbb{Z}_+$-modules and by \cite{Ost2} for algebra objects in $A \left( 1,k \right)_{\frac{1}{2}}$. Furthermore, there is an interesting conjecture relating the exponents of NIM-reps and those of the modular invariants of the category they live on. In an appendix, we test explicitly this conjecture for unitary modular tensor categories up to rank 4 (which by \cite{RSW} include examples of the three families of categories considered in this paper for which we compute their respective NIM-reps).

The structure of the paper is as follows. In Section \ref{prelim} we introduce all the necessary background on fusion and module categories as well as algebra objects, and also on $\mathbb{Z}_+$-modules and NIM-reps. In Section \ref{NIMrepscalcs} we compute the NIM-reps for each category, and also the algebra objects derived from those. Appendix \ref{modularinvs} includes the calculations testing the exponents conjecture for modular tensor categories of rank up to 4.

\subsection*{Acknowledgements}
SH is supported by the Engineering and Physical Sciences Research Council. ARC is supported by Cardiff University. Devi Young's contribution was supported by the Cardiff University On-Campus Internship Scheme 21/22.

\section{Preliminaries}\label{prelim}

Throughout this paper, we fix $\mathbbm{k}$ to be an algebraically closed field. In this section, we collect some basic definitions necessary for our work, from group theory (see e.g. \cite{Cam}), fusion and modular tensor categories, $\mathbb{Z}_+$-rings and module categories {\`a} la \cite{EGNO}, to NIM-reps \cites{Behrend,BPRZ,GannonNIM,Gannon,Gannonbook} and their relation to the previous.

\subsection{Group Actions}

\begin{definition}
Let $G$ be a group and $S$ a set.
A \textit{G-action on $S$} is a binary operation $\ast \colon G \times S \to S$ such that,  for all $s\in S$,
\begin{equation*}
    e \ast s =s \text{, and }  (g \cdot h)\ast s = g \ast (h \ast s),
\end{equation*}
where $e\in G$ is the group identity element. A set with such an action is called a \textit{G-set}.
\end{definition}

\begin{definition}
Let $S$ be a $G$-set, and $s \in S$.
\begin{itemize}
    \item[-] The \textit{orbit of G through s} is the subset of $S$ defined by
\begin{equation*}
    \mathrm{Orb}(s) = \{g\ast s \vert g\in G\}.
\end{equation*}
\item [-] The $G$-action on $S$ is called \textit{transitive} if $\mathrm{Orb}(s) = S$.
    \item[-] The \textit{stabiliser of s} is the subgroup of $G$ defined by 
\begin{equation*}
    \mathrm{Stab}(s)=\{g\in G \vert g\ast s = s\}.
\end{equation*}

\end{itemize}
\end{definition}
For two elements $s,t \in S$, the orbits $\mathrm{Orb}(s),\mathrm{Orb}(t)$ are either equal or disjoint. Hence the set $S$ can be partitioned into a collection of transitive $G$-sets.

\begin{proposition}\label{finorb}
Let $S$ be a $G$-set, and take $s\in S$. There is an isomorphism of $G$-sets between the orbit $\text{Orb}(s)$ and the set of left cosets $G/\text{Stab}(s)$, where $G$ acts on the set of left cosets by left-translation.
\end{proposition}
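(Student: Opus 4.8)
The plan is to construct the isomorphism of $G$-sets explicitly via the orbit-stabiliser correspondence. First I would fix $s \in S$ and define a map $\varphi \colon G/\mathrm{Stab}(s) \to \mathrm{Orb}(s)$ by sending a coset $g\,\mathrm{Stab}(s)$ to $g \ast s$. The first task is to check this is well-defined: if $g\,\mathrm{Stab}(s) = g'\,\mathrm{Stab}(s)$, then $g^{-1}g' \in \mathrm{Stab}(s)$, so $(g^{-1}g') \ast s = s$, and applying $g \ast (-)$ and using the action axiom gives $g' \ast s = g \ast s$. This step is the only place where a small verification is really needed, and it is routine.

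Next I would check that $\varphi$ is a bijection. Surjectivity is immediate from the definition of $\mathrm{Orb}(s)$, since every element of the orbit has the form $g \ast s$ for some $g \in G$. For injectivity, suppose $g \ast s = g' \ast s$; acting by $g^{-1}$ and using the axioms yields $(g^{-1}g') \ast s = s$, hence $g^{-1}g' \in \mathrm{Stab}(s)$, so $g\,\mathrm{Stab}(s) = g'\,\mathrm{Stab}(s)$. This is essentially the same computation run backwards from the well-definedness check.

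Finally I would verify that $\varphi$ intertwines the two $G$-actions, namely that $\varphi(h \ast (g\,\mathrm{Stab}(s))) = h \ast \varphi(g\,\mathrm{Stab}(s))$ for all $h \in G$. Since the action on cosets is left-translation, the left-hand side is $\varphi((hg)\,\mathrm{Stab}(s)) = (hg) \ast s$, while the right-hand side is $h \ast (g \ast s)$, and these agree by the associativity axiom of the $G$-action. Putting well-definedness, bijectivity, and equivariance together gives the claimed isomorphism of $G$-sets.

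There is no serious obstacle here; this is the standard orbit-stabiliser theorem. If anything, the only point demanding a little care is keeping the well-definedness and injectivity arguments cleanly separated (they are logically the two directions of the same equivalence $g\,\mathrm{Stab}(s) = g'\,\mathrm{Stab}(s) \iff g \ast s = g' \ast s$), and making sure the action axioms are invoked correctly when cancelling $g^{-1}$.
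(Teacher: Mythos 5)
Your proof is correct and is the standard orbit--stabiliser argument; the paper itself states this proposition without proof, treating it as a standard fact from group theory (citing a general reference), so there is nothing to compare against. The three steps you identify --- well-definedness, bijectivity, and $G$-equivariance of the map $g\,\mathrm{Stab}(s) \mapsto g \ast s$ --- are exactly what is needed.
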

\begin{proposition} \label{prop-conjgroup}
    Two left coset $G$-sets $G/H$,$G/K$ are isomorphic as $G$-sets if and only if $H, K$ are conjugate as subgroups of $G$.
\end{proposition}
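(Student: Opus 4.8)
The plan is to prove both implications by tracking the stabiliser subgroup of a base point, exploiting the fact that an isomorphism of $G$-sets intertwines stabilisers. The computation I will use repeatedly is that in $G/H$ the stabiliser of the coset $gH$ equals $gHg^{-1}$: indeed $a \ast gH = gH$ if and only if $g^{-1}ag \in H$.

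For the forward direction, suppose $\phi \colon G/H \to G/K$ is an isomorphism of $G$-sets and write $\phi(eH) = gK$ for some $g \in G$. First I would record that any morphism of $G$-sets $\phi$ satisfies $\mathrm{Stab}(s) \subseteq \mathrm{Stab}(\phi(s))$ for every $s$, with equality when $\phi$ is injective (the reverse inclusion uses that $\phi(a \ast s) = \phi(s)$ forces $a \ast s = s$). Applying this with $s = eH$ gives $\mathrm{Stab}(eH) = \mathrm{Stab}(gK)$, and by the computation above this reads $H = gKg^{-1}$. Hence $H$ and $K$ are conjugate.

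For the converse, suppose $K = g^{-1}Hg$ for some $g \in G$; the plan is to exhibit an explicit equivariant bijection. Define $\phi \colon G/K \to G/H$ by $\phi(xK) = xg^{-1}H$. The one point needing care is well-definedness: if $xK = yK$ then $y^{-1}x \in K = g^{-1}Hg$, which rearranges to $(yg^{-1})^{-1}(xg^{-1}) \in H$, so $xg^{-1}H = yg^{-1}H$. Equivariance, $\phi(a \ast xK) = a \ast \phi(xK)$, is then immediate, and the assignment $yH \mapsto ygK$ is a two-sided inverse by the symmetric computation (using $H = gKg^{-1}$). Therefore $\phi$ is an isomorphism of $G$-sets.

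I do not expect a genuine obstacle here: the only subtlety is fixing the left/right conventions so that a conjugation relation $H = gKg^{-1}$ is converted into right multiplication of cosets by $g^{-1}$ — the naive guess $xK \mapsto gxK$ is not even well-defined in general. Alternatively one could route everything through Proposition~\ref{finorb}, realising each coset space as an abstract transitive $G$-set and comparing stabilisers of base points there, but the direct argument above is shorter and self-contained.
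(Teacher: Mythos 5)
Your proof is correct. Note that the paper states this proposition without proof, treating it as standard background from group theory, so there is nothing in the text to compare against; your argument --- computing $\mathrm{Stab}(gH) = gHg^{-1}$, using that a bijective $G$-map preserves stabilisers to get $H = gKg^{-1}$, and then exhibiting the explicit mutually inverse equivariant maps $xK \mapsto xg^{-1}H$ and $yH \mapsto ygK$ --- is the standard one and is complete. One small correction to your closing aside: the map $xK \mapsto gxK$ \emph{is} well-defined (left translation always descends to left cosets); what actually fails is that it is not $G$-equivariant, and it does not land in $G/H$ in any case. The genuine subtlety, which your construction handles correctly, is that the conjugacy relation $K = g^{-1}Hg$ must be converted into \emph{right} multiplication by $g^{-1}$ in order to commute with the left $G$-action.
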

As a result of these propositions, we can study all $G$-sets by studying the $G$-sets of left cosets for all conjugacy classes of subgroups in $G$.

\subsection{Fusion Categories}

A \textit{monoidal category} $\cC$ consists of a tuple $\left( \cC,\otimes, \mathbbm{1},\alpha,\lambda,\rho \right)$ where $\cC$ is a category, $\otimes \colon \cC \times \cC \to \cC$ is a bifunctor, $\mathbbm{1} \in \Ob \left( \cC \right)$, $\alpha_{X,Y,Z} \colon \left( X \otimes Y \right) \otimes Z \to X \otimes \left( Y \otimes Z \right)$ is a natural isomorphism for each $X, Y, Z \in \Ob \left( \cC \right)$, and $\lambda_X \colon \mathbbm{1} \otimes X \to X$ and $\rho_X \colon X \otimes \mathbbm{1} \to X$ are natural isomorphisms for all $X \in \Ob \left( \cC \right)$,
satisfying coherence axioms (pentagon and triangle).

A monoidal category is called \textit{rigid} if it comes equipped with left and right dual objects --- that means, for every $X \in \Ob \left( \cC \right)$ there exists respectively an object $X^* \in \Ob \left( \cC \right)$ with evaluation and coevaluation maps $\ev_X \colon X^* \otimes X \to \mathbbm{1}$ and $\coev_X \colon \mathbbm{1} \to X \otimes X^*$, as well as an object ${}^*X \in \Ob \left( \cC \right)$ with evaluation and coevaluation maps $\evr_X \colon X \otimes {}^*X \to \mathbbm{1}$ and $\coevr_X \colon \mathbbm{1} \to {}^*X \otimes X$ satisfying in both cases the usual conditions.

A $\mathbbm{k}$-linear abelian category $\cC$ is \textit{locally finite} if, for any two objects $V,W \in \Ob \left( \cC \right)$, $\Hom_\cC \left( V,W \right)$ is a finite-dimensional $\mathbbm{k}$-vector space and every object has a finite filtration by simple objects. Further, we say $\cC$ is \textit{finite} if there are finitely many isomorphism classes of simple objects. A \textit{tensor category} is a locally finite, rigid, monoidal category such the the tensor product is $\mathbbm{k}$-linear in each slot and the monoidal unit is a simple object of the category.

At this point, it is useful to introduce the following notion. Given an abelian category $\cC$, the \textit{Grothendieck group} $\mathrm{Gr} \left( \cC \right)$ of $\cC$ is the free abelian group generated by isomorphism classes $X_i$ of simple objects in $\cC$. If $X$ and $Y$ are objects in $\cC$ such that $Y$ is simple then we denote as $\left[ X \colon Y\right]$ the multiplicity of $Y$ in a Jordan-H{\"o}lder series of $X$. To any object $X$ in $\cC$ we can canonically associate its class $\left[ X \right] \in \mathrm{Gr} \left( \cC \right)$ given by the formula:
\begin{equation*}
    \left[ X \right]=\sum_i \left[ X \colon X_i \right] X_i.
\end{equation*}

A monoidal category $\cC$ is called \textit{braided} if it comes equipped with natural isomorphisms $c_{X,Y} \colon X \otimes Y \to Y \otimes X$, for all $X, Y \in \Ob \left( \cC \right)$, called the \textit{braiding}, that are compatible with the monoidal structure of the category. This means, the braiding satisfies the so-called \textit{hexagon identities} for any three objects $X, Y, Z \in \Ob \left( \cC \right)$:
\begin{equation*}
    \xymatrix{& X \otimes \left( Y \otimes Z \right) \ar[rr]^{c_{X, Y \otimes Z}} && \left( Y \otimes Z \right) X \ar[dr]^{\alpha_{Y,Z,X}} & \\
    \left( X \otimes Y \right) \otimes Z \ar[ur]^{\alpha_{X,Y,Z}} \ar[dr]_{c_{X,Y} \otimes \ide_Z} &&&& Y \otimes \left( Z \otimes X \right) \\
    & \left( Y \otimes X \right) \otimes Z \ar[rr]^{\alpha_{Y,X,Z}} && Y \otimes \left( X \otimes Z \right) \ar[ur]_{\ide_Y \otimes c_{X,Z}}
    }
    \end{equation*}
    \begin{equation*}
    \xymatrix{ & \left( X \otimes Y \right) \otimes Z \ar[rr]^{c_{X \otimes Y,Z}} && Z \otimes \left( X \otimes Y \right) \ar[dr]^{\alpha^{-1}_{Z,X,Y}} & \\
    X \otimes \left( Y \otimes Z \right) \ar[ur]^{\alpha^{-1}_{X,Y,Z}} \ar[dr]_{\ide_X \otimes c_{Y,Z}} &&&& \left( Z \otimes X \right) \otimes Y \\
    & X \otimes \left( Z \otimes Y \right) \ar[rr]^{\alpha^{-1}_{X,Z,Y}} && \left( X \otimes Z \right) \otimes Y \ar[ur]_{c_{X,Z} \otimes \ide_Y}
    }
\end{equation*}

A \emph{ribbon category} is a braided tensor category $\cC$ together with a \emph{ribbon twist}, i.e., a natural isomorphism $\theta_X\colon X\to X$ which satisfies 
\begin{align*}
\theta_{X \otimes Y} = (\theta_X \otimes \theta_Y)   c_{Y,X}  c_{X,Y},\qquad  \theta_\one = \ide_\one,\qquad (\theta_X)^* = \theta_{X^*}.
\end{align*}

In order to define modular tensor categories, we require the notion of non-degeneracy of a braided category. We say that an object $X$ \emph{centralizes} another object $Y$ of $\cC$ if 
$$c_{Y,X}c_{X,Y}=\ide_{X\otimes Y}.$$
A braided finite tensor category $\cC$ is \emph{non-degenerate}  if the only objects $X$ that centralize \emph{all} objects of $\cC$ are of the form $X=\one^{\oplus n}$ \cite{EGNO}*{Section~8.20}. 
Equivalently, $\cC$ is non-degenerate if and only if it is \emph{factorizable}, i.e., there is an equivalence of braided monoidal categories $\cZ(\cC)\simeq \cC^{\rev}\boxtimes\cC$, where $\cC^\rev$ is $\cC$ as a tensor category, but with reversed braiding given by the inverse braiding \cite{Shi1}. If $\cC$ is a \emph{fusion category} (i.e., a semi-simple finite tensor category) then the above notion of non-degeneracy is equivalent to the commonly used condition that the $S$-matrix is non-singular.

\begin{definition}[{\cites{KL,Shi1}}]
A braided finite tensor category is \emph{modular} if it is a non-degenerate ribbon category.
\end{definition}

\subsection{$\mZ_+$-Rings}

Denote as $\mZ_{+}$ the semi-ring of positive integers with zero.
\begin{definition}
    Let $R$ be a ring which is free as a $\mZ$-module.
    \begin{enumerate}
        \item A $\mZ_+$-\textit{basis} of $R$ is a basis $B=\lbrace b_i \rbrace_{i \in I}$ such that $b_i b_j=\sum\limits_{k \in I} c_{ij}^k b_k$, where $c_{ij}^k \in \mZ_+$.
        \item A $\mZ_+$-\textit{ring} is a ring with a fixed $\mZ_+$-basis and with an identity $1$ which is a non-negative linear combination of the basis elements. If $1$ is a basis element, then it is called a \textit{unital} $\mathbb{Z}_+$-\textit{ring}.

        \item Given a $\mathbb{Z}_+$-ring $\left( R,B \right)$, a $\mathbb{Z}_+$-\textit{module} is an $R$-module $A$ with a fixed $\mathbb{Z}$-basis $M$ such that for any $m_i \in M$, $b_j \in B$, then $b_i \cdot m_j=\sum\limits_k a_{i,l}^k m_k$ with $a_{i,l}^k \in \mathbb{Z}_+$.
    \end{enumerate}
\end{definition}

\begin{example} 
For $\cC$ a fusion category with $X_i$ the representatives of the isomorphism classes of simple objects, the tensor product on $\cC$ induces a natural multiplication on $\mathrm{Gr} \left( \cC \right)$ defined by the formula:
\begin{equation*}
    X_i X_j:=\left[ X_i \otimes X_j \right]=\sum\limits_{k \in I} \left[ X_i \otimes X_j \colon X_k \right] X_k
\end{equation*}
where $i,j \in I$. This multiplication is associative, and thus $\mathrm{Gr} \left( \cC \right)$ is a $\mathbb{Z}_+$-ring with unit $\left[ \one \right]$. $\mathrm{Gr} \left( \cC \right)$ is called the \textit{Grothendieck ring} of $\cC$.
\end{example}

Let $(R,B)$ be a $\mathbb{Z}_+$-ring, and let $i \in I_0 \subset I$ such that $b_i$ appears in the decomposition of 1. Then, let $\tau \colon R \to \mathbb{Z}$ denote the group homomorphism defined by:
\begin{equation*}
    \tau \left( b_i \right)= \begin{cases}
    1 & i \in I_0, \\
    0 & i \notin I_0.
    \end{cases}
\end{equation*}

\begin{definition}
    A $\mathbb{Z}_+$-ring $(R,B)$ is called a \textit{based ring} if there exists an involution $({})^* \colon I \to I$, $i \mapsto i^*$ of the label set $I$ such that the induced map
    \begin{equation*}
        a=\sum\limits_{i \in I} a_i b_i \mapsto a^*=\sum_{i \in I} a_i b_{i^*},
    \end{equation*}
    where $a_i \in \mathbb{Z}$, is an anti-involution of the ring $R$ and
    \begin{equation*}
    \tau \left( b_i b_j \right)= \begin{cases}
    1 & i=j^*, \\
    0 & i \neq j^*.
    \end{cases}
\end{equation*}
\end{definition}

We arrive to one of the main definitions of this paper:

\begin{definition}
    A \textit{fusion ring} is a unital, based ring of finite rank.
\end{definition}

In order to later introduce the NIM-reps, it is convenient to observe the following property:

\begin{proposition}\label{rigprop}(Rigidity property)
   A fusion ring $(R,B)$ can be equipped with a symmetric bilinear form $(-,-)$ satisfying $(b_ib_j,b_k) = (b_j,b_{i^*}\cdot b_k)$.
\end{proposition}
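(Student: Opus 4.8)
The plan is to take the evident candidate — the form for which the fixed basis $B$ is orthonormal — and read off both required properties directly from the based-ring axioms. Concretely, I would \emph{define}
\begin{equation*}
(a,c) := \tau\!\left(a\, c^*\right), \qquad a,c \in R,
\end{equation*}
where $({})^*$ denotes the anti-involution of $R$ induced by $i \mapsto i^*$ and $\tau$ is the homomorphism introduced above, and then verify that this form is symmetric and satisfies the adjunction identity.

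First I would identify the form explicitly. Since $(R,B)$ is a based ring, $\tau(b_i b_{j^*}) = 1$ precisely when $i = (j^*)^* = j$, and is $0$ otherwise; expanding $a = \sum_i a_i b_i$ and $c = \sum_j c_j b_j$ then gives $(a,c) = \sum_i a_i c_i$. Hence the form is $\mathbb{Z}$-valued, symmetric, and non-degenerate, so the symmetry requirement is immediate and needs no separate argument. (Over $\mathbb{R}$ it is in fact positive-definite, which is why it is the natural pairing to use later for NIM-reps.)

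Next, the adjunction identity. The one structural fact I would isolate is that $\tau$ is \emph{cyclic}: from $\tau(b_p b_q) = \delta_{p,q^*} = \delta_{q,p^*} = \tau(b_q b_p)$ together with bilinearity, $\tau(xy) = \tau(yx)$ for all $x,y \in R$. Writing $b_i b_j = \sum_k c_{ij}^k b_k$ and using associativity of $R$ and $\tau(b_l b_{k^*}) = \delta_{lk}$, we obtain $(b_i b_j, b_k) = \tau(b_i b_j b_{k^*}) = c_{ij}^k$. On the other hand, since $({})^*$ is an anti-homomorphism with $b_l^* = b_{l^*}$,
\begin{equation*}
(b_j, b_{i^*} b_k) = \tau\!\left(b_j\,(b_{i^*} b_k)^*\right) = \tau\!\left(b_j\, b_{k^*}\, b_i\right) = \tau\!\left(b_i\, b_j\, b_{k^*}\right),
\end{equation*}
the last step being cyclicity of $\tau$. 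Thus $(b_i b_j, b_k) = (b_j, b_{i^*} b_k)$ on basis elements, and extending linearly in each argument (noting that $b_i \mapsto b_{i^*}$ is itself linear) yields the identity on all of $R$.

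The argument is essentially bookkeeping; the single point deserving attention is the cyclicity of $\tau$ and its compatibility with $({})^*$ — i.e.\ that $\delta_{p,q^*}$ is unchanged under swapping $p$ and $q$, which is exactly where one uses that $i \mapsto i^*$ is an \emph{involution} rather than merely a bijection of $I$. Once that is in place, associativity of $R$ does the rest, so there is no genuine obstacle: the content of the proposition is really the observation that the obvious inner product has the Frobenius/adjunction property for free.
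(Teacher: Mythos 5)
Your proof is correct and follows essentially the same route as the paper's: both use the form for which $B$ is orthonormal (your $(a,c)=\tau(ac^*)$ is exactly this form), reduce the adjunction identity to the cyclic invariance of $\tau$ on triple products, and invoke the anti-involution property of $({})^*$. Your write-up is somewhat more explicit in isolating cyclicity of $\tau$ as the key lemma, but the mathematical content is identical.
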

\begin{proof}
    Let $(-,-)$ be the symmetric bilinear form defined by the condition $( b_i,b_j )=\delta_{i,j}$. Then the property of $\tau(b_ib_j) = \delta_{i,j^*}$ can be reformulated as 
    \begin{equation*}
        (1,b_ib_j) = (b_i,b_{j^*}) = (b_{i^*},b_j)
    \end{equation*}
    It is clear that $(1,(b_ib_j)b_k)\tau((b_ib_j)b_k) = c_{ij}^{k^*}$, and by the cyclic invariance we get that 
    \begin{equation*}
        (1,(b_ib_j)b_k)= (1,b_i(b_jb_k))
    \end{equation*}
    Thus, we find that
    \begin{equation*}
        ((b_ib_j)^*,b_k) = (b_{j^*}b_{i^*},b_k) = (b_{i^*},b_jb_k)
    \end{equation*}
    where the first equality uses that the induced map is an anti-involution. Relabelling of the indices gives the stated result.
\end{proof}

\begin{notation}
Whenever clear from the context, we may refer to a fusion ring simply as $R$ instead of $\left( R,B \right)$.
\end{notation}

\begin{example}\label{Grothring}
    Let $\cC$ be a semi-simple rigid monoidal category. Then the Grothendieck ring is a fusion ring, with the simple objects acting as the basis. The involution is then taking the dual of an object and the symmetric bilinear form is $(X,Y)=\mathrm{dim}_{\mathbbm{k}} \left( \mathrm{Hom}_\cC \left( X,Y \right)\right)$, for simple objects $X,Y \in \text{Ob(C)}.$
\end{example}

The following explicit examples will be studied in detail in later Sections.

\begin{example}[\emph{Group rings}]\label{grouprng}
    Take a finite group G, and construct the group ring $\mZ G$ where addition is linear and multiplication is given by the group operation. Then $(\mZ G, G)$ is a fusion ring with involution $g^*= g^{-1}$. 
\end{example}

\begin{example}[\emph{Ising fusion ring}]\label{ising}
    Let $B=\{1, X, Y\}$, and $R$ the integer span $\mZ B$ with addition defined linearly and multiplication given by the fusion rules
    \begin{equation*}
        X^2 = 1 + Y, \quad Y^2 = 1 \quad XY = YX = X.
    \end{equation*}
    $(\mZ B,B)$ is a fusion ring with the self-dual involution $X^* = X , Y^* = Y.$
    
\end{example}

\subsection{NIM-Reps}

In this subsection we introduce the main character of this article. 

\begin{definition}
Let $(R,B)$ be a fusion ring. A \textit{non-negative integer matrix representation} (NIM-rep for short) of $(R,B)$ is a $\mZ_+$-module $(A,M)$ that satisfies the following condition;
\begin{itemize}
    \item[-] \emph{(Rigidity condition):} let A have a symmetric bilinear form $(-,-)$ defined by 
\begin{equation*}
(m_l,m_k)=\delta_{l,k}
\end{equation*}
for any $l,k \in L$.
Then we must have, for any $i\in I, l,k\in L$ 
\begin{equation*}
(b_i\act m_l,m_k) = (m_l,b_{i^*}\act m_k).
\end{equation*}
\end{itemize}
\end{definition}

\begin{remark}
    In this definition, unlike \Cref{rigprop}, rigidity is a condition, not a property.
\end{remark}

\begin{example}
    A fusion ring can always be considered as a NIM-rep of itself, with the module action simply the ring multiplication.
\end{example}

\begin{example}\label{catnim}
(Follow-up from Definition \ref{Grothring}) If we take a semi-simple module category $\cM$ over $\cC$, then $\mathrm{Gr}(\cM)$ is a NIM-representation of $\mathrm{Gr}( \cC )$.
\end{example}

\begin{remark}\label{rigidity}
    Note that, for $a \in A, m_l \in M$, the symmetric bilinear form $(a,m_l)$ counts the mulitplicity of $m_l$ in the basis decomposition of $a$. We then immediately see that $(a,a)=1$ if and only if $a \in M$.
\end{remark}

Viewed as $\mathbb{Z}_+$-modules, it is straightforward to define the direct sum of NIM-reps: given two NIM-reps $(A,M),(A',M')$ over a fusion ring $(R,B)$ the \textit{direct sum of NIM-reps} is the $R$-module $A \oplus A'$ with a distinguished basis $M \oplus M'$. Other basic notions like \textit{sub-NIM-rep} are defined in a similar way.

\begin{definition}[\cite{EGNO} Section 3.4, \cite{Ost2} Lemma 2.1.]
   A NIM-rep is called \textit{irreducible} if it has no proper sub-NIM-reps\footnote{One can also have the notion of \textit{indecomposable NIM-rep}, meaning one which is not isomorphic to a non-trivial direct sum of NIM-reps. Since we are working over fusion rings, these two notions are equivalent.}.
\end{definition}

\begin{remark}\label{nim0}
Suppose we have a NIM-rep $(A,M)$ over a fusion ring $(R,B)$ that satisfies $b_i \act m_j = 0_R$ for some $b_i \in B, m_j \in M$. The rigidity condition then imposes that $m_j =0_R \in M$. However, the only way $0_R$ appears in the NIM-rep basis is if $\{0_R\} = M$, (i.e this NIM-rep is the trivial NIM-rep). We shall remove this NIM-rep from future considerations. 
\end{remark}

\begin{definition}[\cite{DB}] Let $(A,M),(A',M')$ be two NIM-reps over a fusion ring $(R,B)$. A \textit{NIM-rep morphism} is a function $\psi \colon M \to M'$ inducing a $\mZ$-linear map between the modules. If $\psi$ is a bijection, and the induced map is an isomorphism of $R$-modules, then we say that the NIM-reps are \textit{equivalent}.
\end{definition}

\begin{notation}
Similarly to the case of a fusion ring, we may refer to a NIM-rep simply as $M$ instead of $\left( A,M \right)$.
\end{notation}

We can visually express the data of a NIM-rep in the following way.
For a given NIM-rep $(A,M)$ over a fusion ring $(R,B)$, a \textit{NIM-graph} (sometimes also called in the literature \textit{`fusion graphs'}) is constructed with a node for each element of the basis $M$, and a directed arrow with source $m_l$ and target $m_k$, labelled by an element $b_i \in B$, for every copy of $m_k$ in $b_i \vartriangleright m_l$.  Every node in a NIM-graph will have a self-loop labelled by the ring identity, which we omit for simplicity.

\begin{example}
    If we consider the Ising fusion ring from \Cref{ising} as a NIM-rep over itself, the corresponding NIM-graph is given by;
\begin{center}
    \begin{tikzcd}
        m_1 \arrow[rr,red, leftrightarrow , "X" description] \arrow[dd, blue, leftrightarrow, "Y" description] & & m_X \arrow[ddll, red, leftrightarrow, "X" description] \\
        & & \\
        m_Y   &  &  
\end{tikzcd}
\end{center}
\end{example}

\begin{remark}\label{rmkNIMreps}
\begin{itemize}
    \item[-] The NIM-graph allows us to visualise irreducibility of the corresponding NIM-rep, as a NIM-rep is irreducible if and only if the NIM-graph is connected.
    \item[-] In \cites{Gannon,Gannonbook}, NIM-reps are defined equivalently as an assignment $a \mapsto N_a$ to each $a \in \Phi=\mathrm{Ob} \left( \cC \right)$ of a matrix $N_a$ with non-negative integer entries, satisfying several compatibility conditions. We will not use this description in the main text but it will be useful in Appendix \ref{modularinvs}.
\end{itemize}
\end{remark}

\subsection{Module Categories, Algebra Objects and NIM-Reps}

In this section, unless specified $\cC=\left( \cC,\otimes, \mathbbm{1},\alpha,\lambda,\rho \right)$ is a fusion category. 
\begin{definition}
An \textit{algebra} in $\cC$ is a triple $\left( A,m,u \right)$, with $A \in \Ob \left( \cC \right)$, and  $m \colon A \otimes A \to A$ (multiplication), $u \colon \mathbbm{1} \to A$ (unit) being morphisms in $\cC$,
    satisfying unitality and associativity constraints: 
    \begin{align*} 
      \hspace{.5in}  m (m \otimes \ide_A) = m(\ide_A \otimes m) \alpha_{A,A,A}, \quad \quad
       m (u \otimes \ide_A) = \lambda_A, \quad   \quad m(\ide_A \otimes u) = \rho_A. 
    \end{align*}
\end{definition}

\begin{example}
    Let $\cC$ be a tensor category, then $\one$ is an algebra. In fact, for any $X \in \mathrm{Ob}(\cC)$ the object $A=X \otimes X^*$ has a natural structure of an algebra with unit $u=\mathrm{coev}_X$ and multiplication $m=\ide_X \otimes \mathrm{ev}_X \otimes \ide_{X^*}$.
\end{example}

\begin{definition}
\begin{enumerate}
     \item[(a)] An algebra $A$ in $\cC$ is \textit{indecomposable} if it is not isomorphic to a direct sum of non-trivial algebras in $\cC$.
        \item[(b)] An algebra $A$ in $\cC$ is \textit{separable} if there exists a morphism $\Delta' \colon A \to A \otimes A$ in $\cC$ so that  $m \Delta' = \ide_A$ as maps in $\cC$ with
    \begin{equation*}
               \left( \ide_A\otimes m \right) \alpha_{A, A, A} \left( \Delta'\otimes \ide_A \right) = \Delta' m = \left( m \otimes \ide_A \right) \alpha^{-1}_{A, A, A} \left( \ide_A\otimes \Delta' \right).
   \end{equation*}
     \end{enumerate}
\end{definition}

Just like in abstract algebra, one can construct the related notion of a module over an algebra in the following way.

\begin{definition} Take $A:=(A,m_A,u_A)$, an algebra in $\cC$. 
A {\it right $A$-module in $\cC$} is a pair $(M, \rho_M)$, where $M \in \mathrm{Ob}(\cC)$, and $\rho_M:=\rho_M^A \colon M \otimes A \to M$ is a morphism in $\cC$  so that $$\rho_M(\rho_M \otimes \ide_A) = \rho_M(\ide_M \otimes m_A)\alpha_{M,A,A} \quad \text{ and } \quad r_M = \rho_M(\ide_M \otimes u_A).$$ A {\it morphism} of right $A$-modules in $\cC$ is a morphism $f \colon M \to N$ in $\cC$ so that $f\rho_M  = \rho_N (f \otimes \ide_A)$. Right $A$-modules in $\cC$ and their morphisms form a category, which we denote by~$\Mod_\cC - A$. 
The categories $A-\Mod_\cC$ of {\it left $A$-modules $(M, \lambda_M:=\lambda_M^A \colon A \otimes M \to M)$  and $A-\mathrm{Bimod}_\cC$ of {\it $A$-bimodules} in $\cC$} are defined likewise.
\end{definition}

We want to relate these categories of modules to the following notion:
\begin{definition}
    Let $\cC$ be a monoidal category. A {\it left module category} over $\cC$ is a category $\cM$ equipped with an {\it action} (or {\it module product}) bifunctor $\otimes \colon \cC \times \cM \to \cM$ and natural isomorphisms 
    \begin{equation*}
            m_{X,Y,M} \colon \left( X \otimes Y \right) \otimes M \to X \otimes \left( Y \otimes M \right),
    \end{equation*}
    called the {\it module associativity constraint} and
    \begin{equation*}
            l_{M} \colon \one \otimes M \to M, \quad \quad r_{M} \colon M \otimes \one \to M
    \end{equation*}
    called the {\it unit constraints}, such that both {\it pentagon diagram}: 
    \begin{equation*}
    \xymatrix{
    & \left( \left( X \otimes Y \right) \otimes Z \right) \otimes M \ar[dl]_{a_{X,Y,Z} \otimes \ide_M} \ar[dr]^{m_{X \otimes Y,Z,M}} & \\
    \left( X \otimes \left( Y \otimes Z \right) \right) \otimes M \ar[d]^{m_{X,Y \otimes Z,M}} && \left( X \otimes Y \right) \otimes \left( Z \otimes M \right) \ar[d]_{m_{X,Y,Z \otimes M}} \\
    X \otimes \left( \left( Y \otimes Z \right) \otimes M \right) \ar[rr]^{\ide_X \otimes m_{Y,Z,M}} && X \otimes \left( Y \otimes \left( Z \otimes M \right) \right)
    }
    \end{equation*}
    and the {\it triangle diagram}:
    \begin{equation*}
        \xymatrix{
    \left( X \otimes \one \right) \otimes M \ar[rr]^{m_{X,\one,M}} \ar[dr]_{r_X \otimes \ide_M} && X \otimes \left( \one \otimes M \right) \ar[dl]^{\ide_X \otimes l_M} \\
    & X \otimes M &
        }
    \end{equation*}
    are commutative $\forall X,Y,Z \in \text{Ob}(\cC)$ and $M \in \cM$.
\end{definition}
In a similar way, one defines a {\it right} $\cC$-module category.

A tensor category is the simplest example: it is a module category over itself. A convenient, less trivial example for us is the following:

    \begin{proposition}[Proposition 7.8.10, \cite{EGNO}]
        $\Mod_\cC -A$ is a left $\cC$-module category.
    \end{proposition}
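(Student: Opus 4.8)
The plan is to construct the module-category structure on $\Mod_\cC - A$ directly from the monoidal structure of $\cC$, using the fact that the right $A$-action is ``transverse'' to the left $\cC$-action. First I would define the action bifunctor $\otimes \colon \cC \times (\Mod_\cC - A) \to \Mod_\cC - A$ on objects by $(X, M) \mapsto X \otimes M$, where the right $A$-module structure on $X \otimes M$ is
\begin{equation*}
\rho_{X \otimes M} := (\ide_X \otimes \rho_M)\, \alpha_{X, M, A} \colon (X \otimes M) \otimes A \to X \otimes M.
\end{equation*}
I would check that this is a genuine right $A$-module: the unit axiom follows from the triangle axiom of $\cC$ together with the unit axiom for $M$, and the associativity axiom follows from two applications of the pentagon axiom of $\cC$ combined with the associativity of $\rho_M$. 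On morphisms, given $f \colon X \to X'$ in $\cC$ and $g \colon M \to N$ in $\Mod_\cC - A$, one sets $f \otimes g := f \otimes g$ (the monoidal product in $\cC$) and verifies it is a morphism of right $A$-modules using naturality of $\alpha$; bifunctoriality is then inherited from that of $\otimes$ in $\cC$.

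Next I would take the module associativity constraint $m_{X,Y,M}$ and the unit constraints $l_M$, $r_M$ to be exactly the components $\alpha_{X,Y,M}$, $\lambda_M$, $\rho_M$ (the structural isomorphisms of $\cC$, not the $A$-action $\rho_M$ — here I mean the right unitor of $\cC$ evaluated at the object $M$). The key point to verify is that each of these is in fact a morphism of right $A$-modules with respect to the $A$-module structures defined above, so that they live in $\Mod_\cC - A$. For $\alpha_{X,Y,M}$ this is precisely the pentagon axiom of $\cC$ applied with last argument $A$ (relating the $A$-actions on $((X\otimes Y)\otimes M)\otimes A$ after associating in two ways); for the unitors it is the naturality of $\lambda$ and $\rho$ together with the triangle axiom. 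Naturality of $m$, $l$, $r$ as natural transformations of functors on $\cC \times (\Mod_\cC - A)$ is inherited directly from the naturality of $\alpha$, $\lambda$, $\rho$ in $\cC$.

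Finally I would observe that the pentagon and triangle diagrams required of a module category are literally the pentagon and triangle coherence diagrams of the monoidal category $\cC$, restricted to the relevant objects (with $M$ playing the role of the fourth/third object); since these commute in $\cC$ and all arrows involved are $A$-module morphisms living in $\Mod_\cC - A$, they commute in $\Mod_\cC - A$ as well. Hence $\Mod_\cC - A$ equipped with this data is a left $\cC$-module category.

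The main obstacle — really the only non-formal step — is checking that $X \otimes M$ genuinely carries a right $A$-module structure, i.e. that $\rho_{X\otimes M}$ as defined satisfies the associativity constraint $\rho_{X\otimes M}(\rho_{X\otimes M} \otimes \ide_A) = \rho_{X\otimes M}(\ide_{X\otimes M} \otimes m_A)\,\alpha_{X\otimes M, A, A}$; this requires carefully threading three nested associators $\alpha$ of $\cC$ through the diagram and invoking the pentagon axiom twice, then the $A$-module associativity of $M$ once. Everything afterwards is bookkeeping: the coherence diagrams for the module structure are formal consequences of those for $\cC$, and the verifications that the structural isomorphisms are $A$-module maps are short diagram chases using naturality plus one coherence axiom each.
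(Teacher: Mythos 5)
Your construction is exactly the standard one (and the one in the cited source, \cite{EGNO}*{Proposition 7.8.10}): equip $X\otimes M$ with the $A$-action $(\ide_X\otimes\rho_M)\,\alpha_{X,M,A}$, take the module constraints to be the structural isomorphisms of $\cC$, and reduce all coherence checks to the pentagon and triangle axioms of $\cC$ together with naturality. The proof is correct and follows the same route as the paper's reference, so there is nothing further to add.
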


    In fact, given certain conditions one can go one step further.

    \begin{lemma}[Proposition 7.8.30, \cite{EGNO}]
        Let $A$ be a separable algebra in a fusion category $\cC$. Then the category $\Mod_\cC-A$ of right $A$-modules in $\cC$ is also semisimple.
    \end{lemma}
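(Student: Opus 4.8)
The plan is to realise $\Mod_\cC\text{-}A$ as a module category with a self-dual (in the sense of having an internal Hom / exact structure) action and then deduce semisimplicity from the separability of $A$ together with the semisimplicity of $\cC$. Concretely, the first step is to recall that $\Mod_\cC\text{-}A$ is a $\mathbbm{k}$-linear abelian category with finitely many simple objects: abelianness follows because $\cC$ is abelian and the module structure morphisms are compatible with kernels and cokernels (the forgetful functor $\Mod_\cC\text{-}A\to\cC$ is exact and faithful), and finiteness of the simples follows since every $A$-module is a quotient of a free module $X\otimes A$ with $X\in\cC$, and $\cC$ has finitely many simples. So it remains to show every object is a direct sum of simples, equivalently that every short exact sequence of $A$-modules splits.

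The heart of the argument is a standard ``averaging'' trick using the separability idempotent. Suppose $0\to K\xrightarrow{\iota} M\xrightarrow{\pi} N\to 0$ is a short exact sequence of right $A$-modules. Since $\cC$ is semisimple, this sequence splits \emph{in $\cC$}: choose $s\colon N\to M$ in $\cC$ with $\pi s=\ide_N$. The map $s$ need not be a morphism of $A$-modules, so one corrects it by pre- and post-composing with the action and the separability map $\Delta'\colon A\to A\otimes A$ of the definition of a separable algebra. Explicitly, set
\begin{equation*}
\bar s = \rho_M\,(s\otimes\ide_A)\,\rho_N^{-1}\cdots
\end{equation*}
— more carefully, one uses the composite that takes $n\mapsto$ (act on $M$) $\circ\,(s\otimes\ide_A)\,\circ$ (coact on $N$ via $\Delta'$ and the unit), i.e.
\begin{equation*}
\bar s = \rho_M\,(s\otimes\ide_A)\,(\rho_N^{-1}\otimes\ide_A)\,(\ide_N\otimes\Delta')\,\rho_N^{-1}.
\end{equation*}
One then checks, using $m\Delta'=\ide_A$, that $\pi\bar s=\ide_N$, and using the bimodule-type relations satisfied by $\Delta'$ (the displayed identities in the definition of separability) together with the module axiom for $\rho_M,\rho_N$, that $\bar s$ intertwines the $A$-actions. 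Hence the sequence splits in $\Mod_\cC\text{-}A$, so the category is semisimple.

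The main obstacle is the verification that $\bar s$ is genuinely a module morphism: this is the only genuinely computational point, and it is where \emph{all three} clauses of the separability condition are used (not merely $m\Delta'=\ide_A$, but the compatibility of $\Delta'$ with the multiplication on both sides, which is precisely what makes the ``twisted averaging'' well-behaved). A clean way to organise this is diagrammatically, or by invoking the fact that separability of $A$ is equivalent to $A$ being a direct summand of $A\otimes A$ as an $A$-bimodule, so that the multiplication functor $-\otimes_A A$ splits off the relevant pieces; with that reformulation the splitting of the sequence is immediate from its splitting in $\cC$. Either route works; the abelian/finiteness bookkeeping in the first step is routine given that $\cC$ is a fusion category.
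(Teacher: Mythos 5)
The paper does not actually prove this lemma: it is quoted verbatim from \cite{EGNO}*{Proposition 7.8.30}, so there is no in-paper argument to compare against. Your proposal is the standard proof of that result and is correct: split the sequence in $\cC$ by semisimplicity, then average the section through the separability morphism, with $m\Delta'=\ide_A$ giving $\pi\bar s=\ide_N$ and the two-sided compatibility of $\Delta'$ with $m$ giving $A$-linearity of $\bar s$; together with the finite-length bookkeeping (every module is a quotient of some $X\otimes A$) this yields semisimplicity. The only blemish is notational: in your displayed formula the middle factor should be the module action $\rho_N^A\otimes\ide_A$ preceded by $\ide_N\otimes u$ to feed $\Delta'$, not an inverse $\rho_N^{-1}\otimes\ide_A$ (the action is not invertible) — but your surrounding prose makes the intended composite $N\to N\otimes A\otimes A\to N\otimes A\to M\otimes A\to M$ unambiguous.
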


We now describe how to extract algebra objects from certain NIM-reps. We state the following theorem using results from \cite{EGNO}*{Section 7.10}:
\begin{theorem}\label{thm-EGNOmodule}
    Let $\cC$ be a fusion category, $\cM$ an indecomposable, semisimple $\cC$-module category, and $N \in \mathrm{Ob} \left(\cM \right)$ such that $[N]$ generates $\mathrm{Gr}(\cM)$ as a based $\mZ_+$-module over $\mathrm{Gr}(\cC)$. Then there is an equivalence $\cM \simeq \Mod_\cC-A$ of $\cC$-module categories, where $A= \underline{\mathrm{Hom}}(N,N)$.
\end{theorem}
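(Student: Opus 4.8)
The plan is to deduce this from the theory of internal Hom objects as developed in \cite{EGNO}*{Section 7.9--7.10}, rather than re-proving it from scratch. The key input is the notion of the internal Hom $\underline{\Hom}(M,N) \in \Ob(\cC)$ associated to objects $M, N$ of a $\cC$-module category $\cM$, characterized by the adjunction $\Hom_\cC(X, \underline{\Hom}(M,N)) \cong \Hom_\cM(X \otimes M, N)$ natural in $X \in \Ob(\cC)$, which exists whenever $\cM$ is an exact (in particular, semisimple) module category over a finite tensor category. One then knows that $A := \underline{\Hom}(N,N)$ carries a canonical algebra structure in $\cC$ (composition of internal Homs supplies the multiplication, the coevaluation-type map supplies the unit), so the statement to prove is really that the canonical functor $\cM \to \Mod_\cC\text{-}A$, $L \mapsto \underline{\Hom}(N,L)$, is an equivalence of $\cC$-module categories under the stated generation hypothesis on $[N]$.

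First I would recall the general result (\cite{EGNO}*{Theorem 7.10.1}) that for \emph{any} nonzero $N$, the functor $F = \underline{\Hom}(N,-) \colon \cM \to \Mod_\cC\text{-}A$ is fully faithful onto its image and is a morphism of $\cC$-module categories; moreover $\cM$ is equivalent to the module subcategory generated by the image of $N$. So the entire content reduces to showing that $F$ is \emph{essentially surjective}, equivalently that $N$ generates $\cM$ as a $\cC$-module category — i.e. every simple object of $\cM$ is a subobject of $X \otimes N$ for some $X \in \Ob(\cC)$. This is where the hypothesis "$[N]$ generates $\mathrm{Gr}(\cM)$ as a based $\mZ_+$-module over $\mathrm{Gr}(\cC)$" enters: by \Cref{catnim} the Grothendieck group $\mathrm{Gr}(\cM)$ is a NIM-rep of the fusion ring $\mathrm{Gr}(\cC)$, and the generation statement at the level of the based module says precisely that for each simple $M_j \in \cM$ there is a simple $X_i \in \cC$ with $[M_j \colon X_i \otimes N] = \dim_\mathbbm{k}\Hom_\cM(M_j, X_i \otimes N) > 0$ — using here that $\cM$ is semisimple so Jordan--Hölder multiplicities are genuine Hom-space dimensions, and that $[X_i \otimes N]$ is an honest sum of classes of simples. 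Thus every simple of $\cM$ embeds in some $X_i \otimes N$, which is exactly the condition that the $\cC$-module subcategory generated by $N$ is all of $\cM$.

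Concretely the steps are: (1) invoke existence of $\underline{\Hom}$ and the canonical algebra structure on $A = \underline{\Hom}(N,N)$, noting $A$ is separable because $\cM$ is semisimple (\cite{EGNO}*{Section 7.10}); (2) recall that $F = \underline{\Hom}(N,-)$ is a $\cC$-module functor, fully faithful, with image the module subcategory generated by $N$; (3) translate the hypothesis on $[N]$ via \Cref{catnim} and semisimplicity into: every simple object of $\cM$ is a summand of some $X_i \otimes N$; (4) conclude $F$ is essentially surjective, hence an equivalence of $\cC$-module categories $\cM \simeq \Mod_\cC\text{-}A$; the indecomposability of $\cM$ matches the indecomposability of $\Mod_\cC\text{-}A$ and is consistent with $A$ being indecomposable, though it is not strictly needed once generation is known.

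The main obstacle — really the only non-bookkeeping point — is step (3): making the passage from the purely combinatorial "based $\mZ_+$-module generation" statement to the categorical "module-category generation" statement fully rigorous. One must be careful that a relation like $[M_j] \le \sum_i c_i [X_i \otimes N]$ in $\mathrm{Gr}(\cM)$ with $c_i \in \mZ_{\ge 0}$ genuinely forces $M_j$ to be a \emph{subobject} (equivalently, by semisimplicity, a direct summand) of $\bigoplus_i (X_i \otimes N)^{\oplus c_i}$, and hence of $X_i \otimes N$ for at least one $i$; this is immediate in a semisimple category but is exactly the place where semisimplicity of $\cM$ (Lemma above, for $\Mod_\cC\text{-}A$, and the hypothesis for $\cM$) is essential and should be stated explicitly. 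Everything else — the adjunction defining $\underline{\Hom}$, functoriality, full faithfulness, the algebra axioms for $A$ — can be cited verbatim from \cite{EGNO}*{Sections 7.8--7.10}.
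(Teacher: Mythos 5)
Your proposal is correct and follows the same route as the paper, which states this result without proof as a direct consequence of \cite{EGNO}*{Section 7.10} (in particular Theorem 7.10.1 there). Your only genuinely additional content --- the translation of ``$[N]$ generates $\mathrm{Gr}(\cM)$ as a based $\mZ_+$-module'' into ``every simple of $\cM$ is a summand of some $X_i\otimes N$'' via semisimplicity, so that $N$ generates $\cM$ as a module category --- is exactly the right bridging step and is handled correctly.
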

\begin{definition}
    We shall call a NIM-rep that comes from $\Mod_\cC-A$ in the sense of \Cref{catnim} as \textit{admissible}.
\end{definition}

Suppose we are in the setup required for \Cref{thm-EGNOmodule}.
\begin{proposition}\label{prop-NIMadmis}
    A NIM-rep $(A,M)$ over the fusion ring $(R,B)$ constructed from some fusion category $\cC$ is admissible only if there exists an element $m_0$ in the basis $M$ such that for every element $m_i \in M$, there exists an element $b_j \in B$ such that $b_j \act m_0 = m_i$.
\end{proposition}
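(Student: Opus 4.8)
The plan is to translate the hypothesis into the language of module categories via \Cref{catnim} and \Cref{thm-EGNOmodule}, and then to prove — as the single substantive step — that \emph{any} simple object of an indecomposable semisimple $\cC$-module category generates the whole category under the action of $\cC$.

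I would begin by unwinding admissibility. Being in the setup of \Cref{thm-EGNOmodule} and assuming $(A,M)$ admissible, the NIM-rep is equivalent, in the sense of \Cref{catnim}, to $\mathrm{Gr}(\cM)$ for an indecomposable, semisimple $\cC$-module category $\cM \simeq \Mod_\cC-\Lambda$ (I write $\Lambda$ for the algebra to avoid a clash with the symbol $A$ used here for the underlying abelian group of the NIM-rep). Under this equivalence the basis $M$ is the set of isomorphism classes $[M_l]$ of simple objects of $\cM$, and $b_i \act [M_l] = [X_i \otimes M_l]$ where $b_i = [X_i]$, so that the multiplicity of $m_k = [M_k]$ in $b_i \act m_l$ equals $\dim_{\mathbbm{k}} \Hom_\cM(X_i \otimes M_l, M_k)$. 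Hence what must be proved is: there is a simple object $M_0$ of $\cM$ such that every simple object $M_i$ of $\cM$ is a direct summand of $X_j \otimes M_0$ for some simple $X_j \in \cC$.

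Next I would prove this generation statement, for which in fact any simple $M_0$ works. Let $\cM_0 \subseteq \cM$ be the full subcategory of objects isomorphic to direct summands of $X \otimes M_0$, $X \in \Ob(\cC)$. It is visibly closed under the $\cC$-action and under direct sums and summands, so it is a $\cC$-submodule category, and it is semisimple because $\cM$ is. Let $\cM_0'$ be the full subcategory spanned by the simple objects of $\cM$ not lying in $\cM_0$. The key computation is that $\cM_0'$ is again a $\cC$-submodule category: if a simple summand $M_k$ of $X \otimes M_i$ with $M_i \notin \cM_0$ were to lie in $\cM_0$, then $0 \neq \Hom_\cM(X \otimes M_i, M_k) \cong \Hom_\cM(M_i, {}^*X \otimes M_k)$ by rigidity of $\cC$, so $M_i$ is a summand of ${}^*X \otimes M_k$; since $M_k$ is a summand of some $Z \otimes M_0$, module-associativity exhibits $M_i$ as a summand of $({}^*X \otimes Z) \otimes M_0$, i.e.\ $M_i \in \cM_0$, a contradiction. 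Consequently $\cM \simeq \cM_0 \oplus \cM_0'$ as $\cC$-module categories, and since $\cM$ is indecomposable with $M_0 \in \cM_0 \neq 0$, we conclude $\cM_0' = 0$ and $\cM_0 = \cM$.

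Finally, taking $m_0 = [M_0]$ for any simple $M_0$: given $m_i = [M_i]$, the previous step yields a simple $X_j \in \cC$, equivalently a basis element $b_j \in B$, with $M_i$ a summand of $X_j \otimes M_0$, i.e.\ $m_i$ occurs in the basis expansion of $b_j \act m_0$ (equivalently, in the NIM-graph the node $m_0$ has an outgoing edge to every node) — the asserted condition. Alternatively one could try to read the statement straight off \Cref{thm-EGNOmodule} by taking the generating object $N$ there to be simple, but justifying that reduces to the same lemma. The only genuinely non-routine point is this generation lemma, and within it the one place where care is needed is checking that the complementary family of simples is closed under the $\cC$-action; this is exactly where the rigidity adjunction $\Hom_\cM(X \otimes M, N) \cong \Hom_\cM(M, {}^*X \otimes N)$ is used, and everything else is bookkeeping around the identification of the NIM-rep with $\mathrm{Gr}(\cM)$.
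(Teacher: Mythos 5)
Your generation lemma (any simple object of an indecomposable semisimple $\cC$-module category generates it, proved by splitting off the complement of the orbit of $M_0$ and using the rigidity adjunction) is correct and standard, but it delivers a strictly weaker conclusion than the proposition asserts. What you obtain is that every simple $M_i$ is a \emph{direct summand} of $X_j \otimes M_0$, i.e.\ that $m_i$ occurs with positive multiplicity in $b_j \act m_0$; the proposition demands the exact equality $b_j \act m_0 = m_i$. These are not the same condition, and your closing sentence ("$m_i$ occurs in the basis expansion of $b_j \act m_0$ \dots\ the asserted condition") silently conflates them. A concrete witness: for the Fibonacci fusion ring acting on itself, taking $m_0 = m_\tau$ gives $\tau \act m_\tau = m_\one + m_\tau$, so $m_\one$ occurs in, but does not equal, $b_j \act m_\tau$ for any basis element $b_j$ --- your "any simple $M_0$ works" claim is simply false for the equality version, which is the version the paper actually uses downstream (e.g.\ to force $c_{1,1}=0$ in the near-group case and throughout the $A(1,l)_{\frac{1}{2}}$ analysis).

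The paper's proof avoids this by choosing $m_0 = [N]$ for the \emph{specific} generating object $N$ of \Cref{thm-EGNOmodule}: since $[N]$ generates $\mathrm{Gr}(\cM)$ as a $\mZ_+$-module, each basis element satisfies $m_i = X \act m_0$ on the nose for some $X = \sum_j a_j b_j$ with $a_j \in \mZ_+$, and then non-negativity of all the structure constants forces exactly one summand $b_j \act m_0$ to contribute, with $a_j = 1$ and $b_j \act m_0 = m_i$ exactly. So the missing step in your argument is precisely this positivity bookkeeping applied to the distinguished generator, which is where the equality comes from; it cannot be recovered from the "every simple generates" lemma alone, since that lemma holds for base points for which the equality statement fails.
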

\begin{proof}
    This follows straightforwardly from the conditions in \Cref{thm-EGNOmodule}. Assuming $(A,M)$ is admissible from $\cM$, we identify $m_0$ with the class $[N]$ that generates Gr$(\cM)$ as a $\mZ_+$-module. Then for any $m_i \in M$ we must have some element $X \in R$ such that $X \act m_0 = m_i$. But if we write $X$ in terms of basis elements, it is clear from the non-negativity of the NIM-rep that there must be a distinguished $b_i \in B$ such that $b_i \act m_0 = m_i$. We then also get that $m_0 \in M$ from the rigidity condition.
\end{proof}
\begin{proposition}
    Let $M$ be an admissible NIM-rep over the fusion ring $(R,B)$ constructed from $\cC$. Then the decomposition of the algebra $A=\underline{\mathrm{Hom}}(N,N)$ is given by $\bigoplus_{i \in I}a_i b_i$, where $a_i$ is the number of self-loops of $m_0$ labelled by $b_i$ in the NIM-graph of $(A,M)$. 
\end{proposition}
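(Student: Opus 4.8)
The plan is to compute the multiplicity of the basis element $b_i$ in the class $[A] = [\underline{\Hom}(N,N)] \in \mathrm{Gr}(\cC)$ by using the internal-Hom adjunction together with the rigidity (bilinear form) structure that is available on both $\mathrm{Gr}(\cC)$ and the NIM-rep $\mathrm{Gr}(\cM)$. First I would recall that $N$ corresponds to the distinguished basis element $m_0$ of $M$ furnished by \Cref{prop-NIMadmis}, and that $A = \underline{\Hom}(N,N)$ is characterised by the natural isomorphism $\Hom_\cC(X, \underline{\Hom}(N,N)) \cong \Hom_\cM(X \otimes N, N)$ for all $X \in \Ob(\cC)$. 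Taking $X = X_i$ to range over the simple objects of $\cC$, the multiplicity $a_i = [A : X_i]$ equals $\dim_\mathbbm{k} \Hom_\cC(X_i, A) = \dim_\mathbbm{k} \Hom_\cM(X_i \otimes N, N)$, using semisimplicity of $\cC$ (so that $\Hom(X_i, A)$ detects the $X_i$-isotypic component) and self-duality of the bilinear form $(X,Y) = \dim_\mathbbm{k}\Hom_\cC(X,Y)$ on simple objects.

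Next I would translate the right-hand side into NIM-rep language. Under the identification of $\mathrm{Gr}(\cM)$ with the NIM-rep $(A,M)$ (notation clash aside), the class $[X_i \otimes N]$ is exactly $b_i \act m_0$, and $\dim_\mathbbm{k}\Hom_\cM(X_i \otimes N, N)$ is the multiplicity of the simple object $N$ (i.e.\ of $m_0$) inside $X_i \otimes N$, because $\cM$ is semisimple. By \Cref{rmkNIMreps} (and \Cref{rigidity}), this multiplicity is precisely $(b_i \act m_0, m_0)$, the coefficient of $m_0$ in the basis decomposition of $b_i \act m_0$ — which, read off the NIM-graph, is the number of arrows from $m_0$ to $m_0$ labelled $b_i$, i.e.\ the number of self-loops of $m_0$ labelled by $b_i$. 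Combining the two identifications gives $a_i = (b_i \act m_0, m_0) = \#\{\text{self-loops at } m_0 \text{ labelled } b_i\}$, and hence $[A] = \sum_{i \in I} a_i\, b_i$ with these $a_i$, which is the claim.

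The one genuine subtlety — and the step I expect to be the main obstacle — is justifying that the NIM-rep structure constants $a_{i,0}^{0}$ literally coincide with the Hom-space dimensions $\dim_\mathbbm{k}\Hom_\cM(X_i \otimes N, N)$, rather than merely agreeing up to the bookkeeping of the based-module structure; this requires invoking that the distinguished basis $M$ of the NIM-rep is the one coming from isomorphism classes of simple objects of $\cM$ (as in \Cref{catnim}), so that the module action matrices are exactly the fusion matrices $N_i$ with entries $(N_i)_{k,l} = \dim_\mathbbm{k}\Hom_\cM(X_i \otimes N_l, N_k)$. Once that dictionary is in place, together with the internal-Hom adjunction of \Cref{thm-EGNOmodule}, everything else is the routine semisimple computation sketched above. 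A secondary point worth a sentence is that the self-loop at $m_0$ labelled by the unit $\one$ (which by convention is suppressed from the NIM-graph) accounts for the canonical summand $\one \hookrightarrow A$ via the unit $u \colon \one \to A$; if one prefers to make the suppressed loop explicit, the formula reads uniformly over all $i \in I$ including $i$ with $b_i = \one$.
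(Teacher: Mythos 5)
Your proposal is correct and follows essentially the same route as the paper: both apply the internal-Hom adjunction $\Hom_\cC(X,\underline{\mathrm{Hom}}(N,N))\cong\Hom_\cM(X\act N,N)$, restrict to simple objects via Schur's Lemma, and read the multiplicity $(b_i\act m_0,m_0)$ off the NIM-graph as the number of self-loops at $m_0$. Your version is in fact slightly more careful than the paper's, which only argues which simples appear rather than tracking their multiplicities, and your remark about the dictionary between NIM-rep structure constants and $\Hom$-space dimensions makes explicit a step the paper leaves implicit.
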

\begin{proof}
    Using the isomorphism from \cite{EGNO}*{Equation 7.21} applied to the algebra $A$, we have that 
    \begin{equation*}
        \Hom_{\cC}(X, A) \cong \Hom_{\cM}(X \act N,N).
    \end{equation*}
    By Schur's Lemma, if we restrict $X$ to the simple objects of $C$, then $X$ appears in the decomposition of $A$ if and only if $N$ is in the decomposition of $X \act N$. But by restricting to the NIM-reps picture, and the identification of $m_0$ with $N$, we see that this occurs exactly when $X$ labels a self-loop on $m_0$. This gives the result.
\end{proof}

\section{NIM-Representations}\label{NIMrepscalcs}

In the following subsections we compute explicitly the NIM-reps of fusion rings associated to relevant examples of families of modular and fusion categories. We also  extract from these any algebra objects and compare to existing results.

\subsection{Group Rings}\label{sec:groupNIM}
Let $G$ be a finite group. In this section, we will focus on classifying all possible NIM-reps over the group fusion rings $R(G):=(\mZ G,G)$ described in \Cref{grouprng}. 
 
\begin{proposition}
Let $M$ be a NIM-Rep over the fusion ring $R(G)$. The NIM-rep module action restricts to a group action on $M$.
\end{proposition}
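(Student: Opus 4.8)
The plan is to show that for each group element $g \in G$, the map $m \mapsto g \act m$ permutes the distinguished basis $M$ of the NIM-rep, and that this assignment is a group action in the sense of the first definition of the paper. The key observation is that the basis element $g \in G$ of the fusion ring $R(G)$ is \emph{invertible}: its inverse is $g^{-1} = g^*$, and $g \cdot g^* = e$, the ring identity, which is itself the basis element corresponding to the identity of $G$.

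First I would argue that $g \act m$ lies in $M$ for any $m \in M$. Using the rigidity condition together with \Cref{rigidity}, it suffices to show $(g \act m, g \act m) = 1$. By the rigidity condition, $(g \act m, g \act m) = (m, g^* \act (g \act m))$. Since the action is an $R$-module action, $g^* \act (g \act m) = (g^* g) \act m = e \act m = m$, because $e$ acts as the identity on $A$. Hence $(g \act m, g \act m) = (m,m) = 1$, so $g \act m \in M$.

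Next I would check that $m \mapsto g \act m$ is a bijection of $M$. Injectivity follows since $g \act m = g \act m'$ implies $m = g^* \act (g \act m) = g^* \act (g \act m') = m'$; surjectivity follows because $g \act (g^* \act m) = m$, so every $m \in M$ is in the image. Finally, the two axioms of a group action are immediate from the module structure: $e \act m = m$ since $e = [\one]$ acts as the identity, and $(gh) \act m = g \act (h \act m)$ is exactly associativity of the module action, noting that the product of the basis elements $g$ and $h$ in $R(G)$ is the single basis element $gh$. Therefore $\ast \colon G \times M \to M$, $(g,m) \mapsto g \act m$, is a well-defined $G$-action on $M$.

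The only subtlety — and the one point worth stating carefully rather than the main obstacle — is the interplay between "the action lands in the span $A$" (automatic from being a $\mZ_+$-module) and "the action lands in the basis $M$" (which is what we actually need and where rigidity plus invertibility of $g$ is essential); without invertibility one would only get that $g \act m$ is a non-negative integer combination of basis elements, not a single basis element. No part of the argument is genuinely hard once this is isolated.
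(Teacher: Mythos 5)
Your proof is correct and uses exactly the same key step as the paper: applying the rigidity condition to compute $(g \act m, g \act m) = (m, g^{-1} \act (g \act m)) = (m,m) = 1$ and invoking \Cref{rigidity} to conclude $g \act m \in M$. The additional verifications of bijectivity and the group-action axioms are fine but are left implicit in the paper's argument.
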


\begin{proof}
   The NIM-rep module action will restrict to a group action on $M$ if every element $g\vartriangleright m_l$ is in the basis of $M$. This can be seen as 
   \begin{equation*}
       (g \vartriangleright m_l, g \vartriangleright m_l) = (m_l,g^{-1}\vartriangleright(g \vartriangleright m_l)) = (m_l,m_l) = 1
   \end{equation*}
   and so $g \vartriangleright m_l$ is in the basis of $M$ by \Cref{rigidity}.
\end{proof}

If we restrict ourselves to irreducible NIM-reps, we get the following result. 

\begin{proposition}\label{groupclassify}
    Irreducible NIM-reps of the group fusion ring $R(G)$ correspond to transitive group actions of G. 
\end{proposition}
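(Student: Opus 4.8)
The plan is to set up a bijection between equivalence classes of irreducible NIM-reps of $R(G)$ and isomorphism classes of transitive $G$-sets, and then invoke the orbit–stabiliser machinery recorded in \Cref{finorb} and \Cref{prop-conjgroup} to make this concrete. The previous proposition already tells us that any NIM-rep $(A,M)$ over $R(G)$ restricts to a genuine $G$-action on the finite basis set $M$; conversely, given any finite $G$-set $S$, one builds a NIM-rep by taking $A = \mZ S$ with basis $M = S$ and letting $g$ act by the given permutation. So the bulk of the argument is checking that these two constructions are mutually inverse and that irreducibility on one side matches transitivity on the other.

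First I would verify that the construction $S \mapsto (\mZ S, S)$ does produce a NIM-rep: the module axioms hold because the $G$-action is associative and unital, the structure constants $a_{g,s}^t = \delta_{g\ast s, t}$ are visibly non-negative integers, and the rigidity condition $(g\act m_l, m_k) = (m_l, g^{-1}\act m_k)$ holds because permuting basis elements is orthogonal with respect to the form $(m_l,m_k)=\delta_{l,k}$ --- indeed $(g\ast s = t) \iff (s = g^{-1}\ast t)$. Next I would observe that the two operations are inverse: starting from a NIM-rep, restricting to the $G$-action and re-forming $\mZ M$ recovers the original module since a NIM-rep is by definition free on $M$ with the action determined by where basis elements go; starting from a $G$-set, the restricted action is the original one. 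Finally, the key equivalence of properties: by \Cref{rmkNIMreps}, a NIM-rep is irreducible iff its NIM-graph is connected; since every arrow in the NIM-graph of $(\mZ S, S)$ is of the form $m_s \to m_{g\ast s}$, two nodes $m_s, m_t$ are joined by a (directed) path iff $t \in \mathrm{Orb}(s)$, so connectedness of the graph is exactly transitivity of the $G$-action. (Alternatively, phrased module-theoretically: a sub-NIM-rep is precisely a $G$-stable subset of $M$, i.e. a union of orbits, so properness of sub-NIM-reps is obstructed exactly when there is a single orbit.)

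I do not expect a serious obstacle here --- the statement is essentially a dictionary translation --- but the one point requiring a little care is matching the \emph{equivalences} on both sides rather than just the objects: a NIM-rep morphism is a bijection $\psi\colon M \to M'$ inducing an $R$-module isomorphism, and I would need to note that such a $\psi$ intertwines the $G$-actions (so is an isomorphism of $G$-sets) and conversely. Once that is in place, one could, if desired, append the sharper statement that these are classified by conjugacy classes of subgroups of $G$ via $S \cong G/\mathrm{Stab}(s)$, quoting \Cref{finorb} and \Cref{prop-conjgroup} directly; but for the proposition as stated it suffices to produce the correspondence with transitive $G$-actions, so I would keep the proof short and defer the subgroup reformulation to a remark or to the subsequent discussion of explicit examples.
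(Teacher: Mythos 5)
Your proof is correct and takes essentially the same approach as the paper's: both arguments rest on the observation that a decomposition into sub-NIM-reps is the same thing as a partition of the basis into $G$-orbits, so irreducibility matches transitivity. You additionally verify that every finite $G$-set yields a NIM-rep (checking the rigidity condition explicitly) and that the two constructions are mutually inverse, which the paper leaves implicit; this is extra care rather than a different argument.
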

\begin{proof}
    If a NIM-rep $(A,M)$ over $R(G)$ is not irreducible, then its corresponding group action will always be partitioned into $G$-orbits by restricting the action to the NIM-reps that sum to give $(A,M)$, Thus the group action is transitive only if the NIM-rep is irreducible.

    Conversely, if the group action is not transitive, then we can write it as the sum of some finite combination of G-actions $\act_i:G \times M_i \to M_i$. It is easy to see that each $M_i$ is a $\mZ$-basis for another NIM-rep over $R(G)$. Hence the NIM-rep is irreducible only if the group action is transitive.
\end{proof}

We can thus explicitly describe the structure of such NIM-reps of $R(G)$. By \Cref{finorb}, the basis elements of an irreducible NIM-rep $(A,M)$ are parametrised by the left cosets of $H$ in $G$, for some subgroup $H \subseteq G$ i.e If we let $\{g_i\}_{1\leq i \leq |G:H|}$ be a set of coset representatives, then $M = \{m_{g_i}\}_{1\leq i\leq |G:H|}$.

The NIM-rep action is then given by the induced $G$-action on this set of left cosets, $$X \act m_{g_i} = m_{g_j}$$ where $Xg_i \in g_j H$. We shall write $M(H)$ for such a NIM-rep.

\begin{proposition}
   Two NIM-reps $M(H),M(K)$ over $R(G)$ are isomorphic if and only if $H,K$ are conjugate subgroups of $G$. 
\end{proposition}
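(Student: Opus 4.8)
The plan is to reduce the statement about NIM-reps to the purely group-theoretic fact recorded in \Cref{prop-conjgroup}, using the explicit description of the NIM-reps $M(H)$ just established. Recall that the basis of $M(H)$ is identified with the left coset $G$-set $G/H$, with module action given by the induced $G$-action by left translation, and by \Cref{groupclassify} a NIM-rep morphism between irreducible NIM-reps over $R(G)$ is precisely a $G$-equivariant bijection between the underlying $G$-sets (the $\mathbb{Z}$-linear induced map is an $R(G)$-module isomorphism exactly when the underlying map of bases intertwines the group actions). So two NIM-reps $M(H)$, $M(K)$ are equivalent as NIM-reps if and only if $G/H$ and $G/K$ are isomorphic as $G$-sets.

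First I would make this reduction precise: given a NIM-rep equivalence $\psi \colon M(H) \to M(K)$, observe that $\psi$ is a bijection of bases and the induced map is an $R(G)$-module isomorphism, so in particular $\psi(g \act m) = g \act \psi(m)$ for all $g \in G$ and all basis elements $m$; hence $\psi$ is an isomorphism of the $G$-sets $G/H \cong G/K$. Conversely, any isomorphism of $G$-sets $\phi \colon G/H \to G/K$ is a bijection on the distinguished bases that intertwines the $G$-actions, hence (extending $\mathbb{Z}$-linearly) intertwines the full $R(G)$-module actions since $R(G) = \mZ G$ is spanned over $\mZ$ by $G$; thus $\phi$ is a NIM-rep equivalence $M(H) \to M(K)$.

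Having set up this dictionary, the proposition follows immediately: by \Cref{prop-conjgroup}, $G/H \cong G/K$ as $G$-sets if and only if $H$ and $K$ are conjugate in $G$, and by the previous paragraph $G/H \cong G/K$ as $G$-sets if and only if $M(H) \cong M(K)$ as NIM-reps. Chaining the two equivalences gives the claim.

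I do not anticipate a genuine obstacle here; the content is really in the earlier propositions. The one point requiring a little care is the "if and only if" at the level of morphisms: one must check that a NIM-rep equivalence, which is by definition only required to be a bijection of bases inducing an $R(G)$-module isomorphism, automatically respects the $G$-set structure, and conversely that a $G$-set isomorphism extends to an $R(G)$-module isomorphism. Both directions are straightforward because the basis elements are permuted by the $G$-action and $G$ is a $\mZ$-spanning set of $R(G)$, so this is a matter of spelling out definitions rather than a real difficulty.
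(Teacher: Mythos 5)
Your proposal is correct and follows the same route as the paper, which simply combines Proposition \ref{prop-conjgroup} with the identification of irreducible NIM-reps over $R(G)$ with transitive $G$-sets from Proposition \ref{groupclassify}. The extra care you take in checking that NIM-rep equivalences correspond exactly to $G$-set isomorphisms is a worthwhile elaboration of what the paper leaves implicit, but it is not a different argument.
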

\begin{proof}
  This follows immediately by combining \Cref{prop-conjgroup} and \Cref{groupclassify}.
\end{proof}

\begin{example}\label{ex-klein4} 
(NIM-reps of $R(\mZ_2 \times \mZ_2)$)

The Klein-four group has presentation
\begin{equation*}
    \mZ_2 \times \mZ_2 = \{ a,b,c | a^2 = b^2 = c^2 = abc =e\}
\end{equation*}

There are 3 isomorphism classes of subgroups in $\mZ_2 \times \mZ_2$;
\begin{itemize}
    \item $\mZ_2 \times \mZ_2$ as a subgroup of itself; Then $M(\mZ_2 \times \mZ_2)$ has a single basis element corresponding to the single coset representative $e$. The NIM-rep graph is given by 
    \begin{center}
        \begin{tikzcd}[nodes={inner sep=0.5pt,minimum size=1ex,circle}]
            m_e \arrow[out=120, in = 60, loop, "a" description] \arrow[out=240, in=180, loop, "b" description,blue] \arrow[out=0, in=300,loop,"c" description,red]
        \end{tikzcd}
    \end{center}
    \item Isomorphism class of $\mZ_2$; There are 3 conjugacy classes of subgroups in this case; $H_1 = \{e,a\}, H_2=\{e,b\}, H_3 = \{e,c\}$. The NIM-reps $M(H_1),M(H_2),M(H_3)$ have two basis elements parameterised by coset representatives $\{e,b\}, \{e,c\}, \{e,a\}$ respectively.
    \begin{center}
        \begin{tikzcd}
            m_e \arrow[out=120,in=60,loop,"a" description] \arrow[rr,leftrightarrow, "b" description,blue, yshift = 1ex] \arrow[rr,leftrightarrow,"c" description,red, yshift = -1ex] & & m_b \arrow[out=120,in=60,loop, "a" description]  ,& m_e \arrow[out=120,in=60,loop,"b" description,blue] \arrow[rr,leftrightarrow, "a" description, yshift = 1ex] \arrow[rr,leftrightarrow,"c" description,red, yshift = -1ex] & & m_c \arrow[out=120,in=60,loop, "b" description,blue] ,&m_e \arrow[out=120,in=60,loop, "c" description,red] \arrow[rr,leftrightarrow, "b" description,blue, yshift = -1ex] \arrow[rr,leftrightarrow,"a" description, yshift = 1ex] & & m_a \arrow[out=120,in=60,loop, "c" description,red]
        \end{tikzcd}
    \end{center}
        
    \item The trivial subgroup $H=\{e\}$; The basis elements of $M(H)$ are simply parameterised by elements of $\mZ_2 \times \mZ_2$.
    \begin{center}\vspace{-15pt}
        \begin{tikzcd}[nodes={inner sep=1pt,minimum size=0.5ex,circle}]
            m_e \arrow[rr, leftrightarrow, "a" description] \arrow[dd, leftrightarrow, "b" description, blue] \arrow[ddrr, leftrightarrow, red] & & m_a \arrow[dd, leftrightarrow, "b" description, blue] \arrow[ddll, leftrightarrow, "c" description, red] \\
             & & \\
            m_b \arrow[rr, leftrightarrow, "a" description] & & m_c
        \end{tikzcd}
    \end{center}
\end{itemize}
\end{example}
\vspace{-30pt}

\begin{example}\label{ex-d3} 
(NIM-reps of $R(D_3)$)

     Consider the dihedral group $D_3$, with presentation
     \begin{equation*}
         D_3 = \{x,a \; | \; x^2=a^3=e, \; xa=a^{-1}x\}
     \end{equation*}
There are four conjugacy classes of subgroups of $D_3$, giving four isomorphism classes of NIM-reps;
\begin{itemize}
\item $D_3$ as a subgroup of itself. This NIM-graph simple consist of a single basis element, with each group ring element acting trivially.
\begin{center}\vspace{-5pt}
        \begin{tikzcd}[nodes={inner sep=6pt,minimum size=0.1ex,circle}]
            m_e \arrow[out=108, in =72, loop, "x" description] \arrow[out=36, in=0, loop, "a" description,blue] 
            \arrow[out=324, in=288,loop,"xa^2" description,red]
            \arrow[out=252, in=216,loop, "xa" description, green]
            \arrow[out=180, in=144,loop, "a^2" description, blue]
        \end{tikzcd}
    \end{center}
    \item The isomorphism class of $\mZ_3$, given by $H =\{e,a,a^2\}$.
    \begin{center}\vspace{-20pt}
        \begin{tikzcd}[nodes={inner sep=0.5pt,minimum size=1ex,circle}]
            m_e \arrow[out=120,in=60,loop, "a" description,blue, distance=1cm] \arrow[in=240,out=300,loop, "a^2" description ,blue, distance=1cm]\arrow[rrr,leftrightarrow,bend left=20, "x" description, shift left=0.8] \arrow[rrr,leftrightarrow, "xa" description, green] \arrow[rrr,leftrightarrow, "xa^2" description, red, bend right=20, shift right=0.8] & & & m_x \arrow[out=60,in=120,loop, "a" description,blue,distance=1cm] \arrow[out=240,in=300,loop, "a^2" description,blue,distance=1cm] 
        \end{tikzcd}
    \end{center}\vspace{-15pt}
    \item The isomorphism class of $\mZ_2$ is given by 3 conjugate subgroups, $H =\{e,x\}, \{e,xa\},\{e,xa^2\}$. This gives one NIM-graph, up to isomorphism of NIM-reps. We shall label our graph using the subgroup $H=\{e,x\}$;
    \begin{center}\vspace{-10pt}
        \begin{tikzcd}[nodes={inner sep=0.5pt,minimum size=1ex,circle}]
            m_e \arrow[out=60,in=120,loop, "x" description,distance=1cm] \arrow[rr, leftrightarrow, "xa^2" description, red, bend left=35, shift left=1] \arrow [rr, rightarrow, "a", blue, shift left=0.5 ] \arrow[ddr, rightarrow, "a^2", blue, shift left=1] &  & m_a \arrow[out=120,in=60,loop, "xa" description,green, distance=1cm]\arrow[ll,rightarrow, "a^2",blue, shift left=1] \arrow[ddl, rightarrow, "a",blue, shift left=0.5] \arrow[ddl,leftrightarrow, "x" description, bend left=35, shift left=1] \\
            & & \\
             & m_{a^2} \arrow[uur, rightarrow, "a^2", blue, shift left=1] \arrow[uul, rightarrow, "a", blue, shift left=0.5] \arrow[uul, leftrightarrow, "xa" description, green, bend left=35, shift left=1]\arrow[out=300,in=240,loop, "xa^2" description, red,distance=1cm]&
        \end{tikzcd}
    \end{center}
    \item The trivial subgroup $H=\{e\}$. The basis elements are parameterised by the elements of $D_3$;\vspace{-20pt}
    \begin{center}
        \begin{tikzcd}[nodes={inner sep=0.5pt,minimum size=1ex,circle}]
            && m_a \arrow[drr, rightarrow, "a"{yshift=-0.5ex}, blue, shift left=2] \arrow[dll, rightarrow, "a^2" {yshift=1ex}, blue, shift right=0.5] \arrow[ddd, leftrightarrow, "x" description] \arrow[ddll, leftrightarrow, "xa^2" description, red, bend right=100] \arrow[ddrr, leftrightarrow, "xa" description, green, bend left=120]&& \\
           m_e \arrow[urr, rightarrow, "a" {yshift=-0.5ex}, blue,shift left=2] \arrow[rrrr, rightarrow, "a^2"{yshift=-0.5ex, xshift=2ex} , blue, shift left=1.4] \arrow[d, leftrightarrow, "x" description] \arrow[ddrr, leftrightarrow, "xa" description, green, bend right=120]  \arrow[drrrr,leftrightarrow, "xa^2" {description, yshift=-1.8ex, xshift=5ex},red, bend left=10]  &&& & m_{a^2} \arrow[llll, rightarrow, "a"{yshift=0.5ex, xshift=-2ex} , blue, shift left=0.1] \arrow[ddll,leftrightarrow, "xa^2" description, red, bend left=100]\arrow[dllll,leftrightarrow, "xa" {description, xshift=-5ex, yshift=-1.8ex}, green, bend right=10] \arrow[ull, rightarrow, "a^2"{yshift=0.5ex} , blue, shift right=0.5] \arrow[d, leftrightarrow, "x" description] \\
           m_x \arrow[drr, rightarrow, "a^2"{yshift=-0.5ex} , blue, shift right=0.5] \arrow[rrrr, rightarrow, "a"{yshift=-0.5ex,xshift=2ex} , blue, shift left=0.1] &&& & m_{xa^2} \arrow[llll, rightarrow, "a^2"{yshift=0.6ex, xshift=-2ex} , blue, shift left=1.4] \arrow[dll, rightarrow, "a" {yshift=0.3ex,xshift=-0.2ex}, blue, shift left=2]\\
              && m_{xa} \arrow[ull, rightarrow, "a" {yshift=0.3ex,xshift=0.2ex}, blue,shift left=2] \arrow[urr, rightarrow, "a^2"{yshift=-0.5ex, xshift=0.4ex} , blue, shift right=0.5]&&
        \end{tikzcd}
    \end{center}
    
\end{itemize}

\end{example}

\subsection{Near-Group Fusion Rings}
In this subsection, we shall focus on another class of fusion rings that can be formed from a finite group $G$.

\begin{definition}[\cite{Siehler}] Let $G$ be a finite group and $\alpha \in \mZZ$. The \textit{near-group fusion ring} is the fusion ring constructed by taking the integer span of the set $G \cup \{X\}$, with multiplication of the group elements as the group operation, and with the element $X$ as:
\begin{equation*}\label{ngfusion}
\begin{split}
    X g &=g X=X, \\ 
    X^2 &=g+\alpha X \\
\end{split}
\end{equation*}
for $g \in G$. The element $X$ is self-dual, i.e $X^* = X$. This is a fusion ring, which we shall denote by $K(G,\alpha)$.
    
\end{definition}

\begin{example}\cite{EGNO}*{Example 4.10.5}
 The case $\alpha=0$ is known as the \textit{Tambara-Yamagami fusion ring}. Notice that this ring is categorifiable if and only if $G$ is abelian. 
\end{example}

The action of $K(G,\alpha)$ on a NIM-rep $(A,M)$ consists of the action of the group $G$ and the non-invertible element $X$. By the results of \Cref{sec:groupNIM}, we know that the NIM-action of the group component will correspond to some $G$-action on the NIM-rep basis. However, unlike \Cref{sec:groupNIM}, we cannot guarantee that this $G$-action is transitive on the basis $M$, due to the action of the non-invertible element $X$. This can be seen in the following example:

\begin{example}(NIM-rep over $K(\mZ_2,0)$)

Let $M=\{m^1,m^2\}$, with $\mZ_2$ acting on both elements trivially i.e. they both have a $G$-action governed by $\mZ_2$. With the action of $X$
given by $X \act m^1 = m^2, X \act m^2 = m^1$, we get a NIM-rep over $K(\mZ_2,0)$ that contains two $G$-orbits.
\end{example}

From this example, we see that the NIM-rep basis $M$ can be partitioned into $G$-orbits, which are connected to each other by the action of $X$. We will write this partition as
\begin{equation*}
    M = \bigcup_{i=1}^p \{m^i_l\}_{1\leq l\leq |G:H_i|},
\end{equation*} 
where the $i$-label counts the $p$-distinct orbits, which are governed by subgroups $\{H_i\}$. The $l$-label denotes the individual elements in each orbit.

As we have already covered the NIM-rep structure of the group part of $K(G,\alpha)$ in \Cref{sec:groupNIM}, we now need to focus on the action of the non-invertible element $X$.

\begin{proposition}\label{ngXsame}
   Let $(A,M)$ be an irreducible NIM-rep over $K(G,\alpha)$. For a fixed group orbit label $i$ in the partition of $M$, we have that $X \act m^i_{l_1} = X \act m^i_{l_2}$ for all $1 \leq l_1,l_2 \leq |G:H_i|$.
\end{proposition}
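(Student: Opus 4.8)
The plan is to exploit the rigidity condition together with the explicit structure of the near-group fusion rules, in particular the identity $Xg = X$ for all $g \in G$. The key observation is that within a single $G$-orbit $\{m^i_l\}$, any two basis elements $m^i_{l_1}, m^i_{l_2}$ are related by the action of some group element $g \in G$, say $g \act m^i_{l_1} = m^i_{l_2}$. Then I would compute $X \act m^i_{l_2} = X \act (g \act m^i_{l_1}) = (Xg) \act m^i_{l_1} = X \act m^i_{l_1}$, where the middle equality uses the associativity of the $\mathbb{Z}_+$-module action and the last uses the fusion rule $Xg = X$ in $K(G,\alpha)$. This is essentially a one-line argument once the setup is unpacked.

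More carefully, I would first recall that by \Cref{groupclassify} and the discussion following it, the restriction of the NIM-action to the group part of $K(G,\alpha)$ gives a $G$-action on $M$, and within the fixed orbit labelled by $i$ the elements $\{m^i_l\}_{1\leq l \leq |G:H_i|}$ form a transitive $G$-set isomorphic to $G/H_i$. Transitivity means: for any $l_1, l_2$ there is $g \in G$ with $g \act m^i_{l_1} = m^i_{l_2}$. The element $X \act m^i_{l_1}$ is some element $a \in A$ (a non-negative combination of basis elements — not necessarily a single one). Applying $g$ and using that the module action is associative, $g \act (X \act m^i_{l_1}) = (g X) \act m^i_{l_1} = X \act m^i_{l_1}$ since $gX = X$. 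On the other hand $g \act (X \act m^i_{l_1})$ is, by associativity the other way — no, more directly: I should instead write $X \act m^i_{l_2} = X \act (g \act m^i_{l_1}) = (Xg) \act m^i_{l_1} = X \act m^i_{l_1}$. This avoids needing to know that $X \act m^i_{l_1}$ is a single basis vector; the computation is purely at the level of the module identity $(rb)\act m = r \act (b \act m)$ applied with $r = X$, $b = g$.

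The role of irreducibility in the statement deserves a remark: strictly, the computation above works for any NIM-rep, not just irreducible ones, so I would either note that the hypothesis is not essential here, or simply invoke the partition of $M$ into $G$-orbits that was set up in the preceding paragraph (which is where irreducibility enters, via \Cref{groupclassify}, to guarantee the orbits are well-defined transitive pieces). The only genuinely needed inputs are: (i) the group part acts by a genuine $G$-action with transitive orbits, so $m^i_{l_1}$ and $m^i_{l_2}$ lie in the same $G$-orbit; (ii) the module action satisfies associativity with respect to ring multiplication; and (iii) the fusion rule $Xg = X$.

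I do not anticipate any real obstacle: this is a direct consequence of the defining relations. The only subtlety to state cleanly is that $X \act m^i_l$ need not be a single basis element, so the argument must be phrased entirely through the associativity identity $(Xg)\act m = X \act (g \act m)$ rather than by tracking individual nodes in the NIM-graph. Concretely, the proof writes: fix $i$ and $l_1, l_2$; by transitivity of the $G$-action on the orbit, pick $g \in G$ with $g \act m^i_{l_1} = m^i_{l_2}$; then
\begin{equation*}
X \act m^i_{l_2} = X \act (g \act m^i_{l_1}) = (X g) \act m^i_{l_1} = X \act m^i_{l_1},
\end{equation*}
using $Xg = X$ from the near-group fusion rules, which completes the argument.
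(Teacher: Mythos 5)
Your proof is correct and is essentially identical to the paper's: both pick $g \in G$ with $g \act m^i_{l_1} = m^i_{l_2}$ (using transitivity of the $G$-action within the orbit) and then compute $X \act m^i_{l_2} = X \act (g \act m^i_{l_1}) = (Xg)\act m^i_{l_1} = X \act m^i_{l_1}$ via the module associativity and the fusion rule $Xg = X$. Your added remarks on the inessential role of irreducibility and on $X \act m^i_l$ not being a single basis element are sensible but do not change the argument.
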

\begin{proof}
    As $m^i_{l_1}$ and $m^i_{l_2}$ are in the same $G$-orbit, there is a group element $g \in G$ such that $m^i_{l_1} = g \act m^i_{l_2}$. Then, using the module action and the fusion rules in \Cref{ngfusion}, we have that
    \begin{equation*}
        X \act m^i_{l_1} = X \act (g \act m^i_{l_2})=(Xg) \act m^i_{l_2} = X \act m^i_{l_2}.
    \end{equation*}
\end{proof}

\begin{notation}
    We shall write $c_{i,j} := (X\act m^i_{l},m^j_{k})$. It is clear from \Cref{ngXsame} that varying the $l,k$ labels has no effect. Additionally, note that by the rigidity condition of the NIM-rep,  we have $c_{i,j}=c_{j,i}$.
\end{notation} 

\begin{remark}\label{rmrk-irrepNIM}
  Irreducibility of a NIM-rep over $K(G, \alpha)$ implies that the group orbits are connected to each other by the non-invertible element $X$. 
 \end{remark}

From now on, we shall denote the action of $X$ on an element $m^i_l$ by
\begin{equation}\label{actionX}
    X \act m^i_l = \sum_{j=1}^p c_{i,j}\sum_{k=1}^{|G:H_j|} m^j_k.
\end{equation}

If we act on both sides of \Cref{actionX} with $X$, using the fusion rules in \Cref{ngfusion} along with \Cref{ngXsame}, we find that 
\begin{equation*}
    |H_i|\sum^{|G:H_i|}_{k=1}m^i_k + \alpha X \act m^i_l = \sum_{j=1}^p c_{i,j}|G:H_j| X \act m^j_l.
\end{equation*}
By counting in this equation the multiplicities of the orbit labelled with $i$, and one labelled by $q\neq i$ respectively, we get that
\begin{align}
    |H_i| + \alpha c_{i,i} = \sum^p_{j=1} c_{i,j}^2|G:H_j|\label{CBC-1} \\
    \alpha c_{i,q} = \sum_{j=1}^p c_{i,j}c_{j,q}|G:H_j|\label{CBC-2}
\end{align}
\begin{remark}
    Should our NIM-rep partition contain only one $G$-orbit, then we would only have one equation of the form of \Cref{CBC-1} as we clearly can pick no $q$ such that $q\neq i$.
\end{remark}
To classify all NIM-reps over a near-group fusion ring, we thus need to find solutions for this set of equations. To better visualise this problem, we can think of these equations in terms of matrices. By setting $C=:\{c_{i,j}\}$, the matrix that determines the action of $X$, and $B=$diag$\{|G:H_i|\}$, it is clear that the action of $X$ is governed by the matrix equation
    \begin{equation}\label{eq-CBC}
        CBC = \alpha\cdot C + |G| \cdot B^{-1}.
    \end{equation}
We are thus seeking to find choices of subgroups $\{H_i\}$ such that there is an non-negative integer-valued symmetric matrix $C$ satisfying \Cref{eq-CBC}.

As the matrix $B$ is invertible, we obtain a quadratic matrix equation in the variable $CB$; 
\begin{equation*}
   (CB)^2 = \alpha \cdot (CB) + |G|\cdot I,
\end{equation*}
where $I$ is the identity matrix. As all of the coefficient matrices commute with each other, we can solve via the quadratic equation, giving us
\begin{equation}\label{eq-CBY}
 CB = \frac{1}{2}\alpha \cdot I \pm \sqrt{(\frac{\alpha^2}{4}+|G|)\cdot I} = \frac{1}{2}\alpha \cdot I \pm  \sqrt{(\frac{\alpha^2}{4}+|G|)} \cdot Y
\end{equation}
where $Y$ is a square root of the identity matrix $I$. As all the elements in $CB$ are non-negative, the non-diagonal elements of $Y$ must all have the same sign. As both $Y$ and $-Y$ are square roots of the identity matrix, only one can provide a valid solution for $CB$, so we can always take the sign in \Cref{eq-CBY} to be positive.

\begin{remark}
    As the elements of $CB$ are integers, the non-diagonal elements of $Y$ must be divisible by $(\frac{\alpha^2}{4}+|G|)^{-1/2}$
\end{remark}

Hence, the problem of classifying NIM-reps over $K(G,\alpha)$ comes down to finding suitable square roots of identity matrices. We do this in full detail for $p=1,2$ in the following propositions. 

\begin{proposition}\label{prop-1orbitnim}
    NIM-reps over $K(G,\alpha)$ consisting of one group orbit are parametrised by pairs $(H,c_{1,1})$, where $H \subseteq G$ a subgroup and $c_{1,1} \in \mZ_{>0}$,  such that $\alpha = c_{1,1}|G:H| - \frac{|H|}{c_{1,1}}$, $c_{1,1}$ divides $|H|$ and $(c_{1,1})^2|G:H|\geq |H|$.
\end{proposition}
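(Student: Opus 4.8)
The plan is to specialise the matrix equation \eqref{eq-CBC} to the case $p=1$, where all matrices become scalars, and then read off the arithmetic constraints. When there is a single group orbit governed by a subgroup $H \subseteq G$, the matrix $C$ is the $1 \times 1$ matrix $(c_{1,1})$ and $B = (|G:H|)$, so \eqref{eq-CBC} becomes the scalar equation
\begin{equation*}
    c_{1,1}^2 \, |G:H| = \alpha \, c_{1,1} + \frac{|G|}{|G:H|} = \alpha \, c_{1,1} + |H|.
\end{equation*}
This is exactly \eqref{CBC-1} in the one-orbit case (with $i=1$), consistent with the remark following \eqref{CBC-2}. Solving for $\alpha$ gives $\alpha = c_{1,1}|G:H| - |H|/c_{1,1}$, which is the stated formula.

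Next I would extract the integrality and positivity conditions. Since $\alpha \in \mZ_{\geq 0}$, the expression $c_{1,1}|G:H| - |H|/c_{1,1}$ must be a non-negative integer. As $c_{1,1}|G:H| \in \mZ$, integrality forces $c_{1,1} \mid |H|$. Non-negativity of $\alpha$ then reads $c_{1,1}|G:H| \geq |H|/c_{1,1}$, i.e. $c_{1,1}^2 |G:H| \geq |H|$. Finally, $c_{1,1} = (X \act m^1_l, m^1_k)$ is a non-negative integer by the rigidity condition, and it must be strictly positive: if $c_{1,1}=0$ then by \eqref{actionX} we would have $X \act m^1_l = 0$, which by \Cref{nim0} forces the trivial NIM-rep, which we have excluded. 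Conversely, given any pair $(H, c_{1,1})$ with $H\subseteq G$, $c_{1,1}\in\mZ_{>0}$, $c_{1,1}\mid |H|$, $c_{1,1}^2|G:H|\geq |H|$, one sets $\alpha := c_{1,1}|G:H| - |H|/c_{1,1}$ (which lies in $\mZ_{\geq 0}$ by these hypotheses), defines the $G$-action on the cosets $G/H$ as in \Cref{sec:groupNIM} and the action of $X$ by \eqref{actionX} with the single coefficient $c_{1,1}$; one then checks that \eqref{ngfusion} is satisfied — the relation $X^2 = \sum_{g} g + \alpha X$ acting on a basis element reduces precisely to \eqref{CBC-1}, which holds by the choice of $\alpha$ — and that the rigidity condition holds since $C$ is symmetric. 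Irreducibility is automatic here: the single $G$-orbit is all of $M$, so the NIM-graph is connected.

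The only genuinely delicate point is verifying that the construction in the converse direction really yields a well-defined NIM-rep, i.e. that \emph{all} the $\mZ_+$-module axioms and the rigidity condition hold, not merely \eqref{CBC-1}. Associativity of the module action must be checked against all three types of products in $K(G,\alpha)$ — $g\cdot g'$, $g\cdot X$ (equivalently $X\cdot g$), and $X\cdot X$. The first is just associativity of the $G$-action on $G/H$; the second follows from \Cref{ngXsame}-type reasoning together with $Xg = X$; and the third is exactly \eqref{CBC-1}. I expect this bookkeeping to be the main (though routine) obstacle, since it is the step where one must confirm that the single scalar equation genuinely captures the full structure. The forward direction, by contrast, is a direct scalar specialisation of the already-derived \eqref{eq-CBC} together with the elementary number-theoretic reading of the formula for $\alpha$.
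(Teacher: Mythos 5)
Your proposal is correct and follows essentially the same route as the paper: both reduce to the single scalar instance of the quadratic relation $|H| + \alpha c_{1,1} = c_{1,1}^2\,|G:H|$ (the paper arrives there by squaring the element-wise form of \eqref{eq-CBY}, you by specialising \eqref{eq-CBC} directly, which is an equivalent manipulation), then solve for $\alpha$ and read off the divisibility, positivity, and $c_{1,1}>0$ conditions exactly as the paper does. The only difference is your sketch of the converse construction, which the paper leaves implicit in the word ``parametrised.''
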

\begin{proof}
    As $p=1$, $Y=1$ is the only possible choice that leads to a valid solution. Let $H$ be the subgroup of $G$ that governs this orbit. The action of the non-invertible element $X$ is given by a single non-negative integer $c_{1,1} \in \mZ_{>0}$. By \Cref{nim0}, this integer is in fact strictly positive.
    
    If we switch to the element-wise notation, \Cref{eq-CBY} can be written as
    \begin{equation*}
       \sqrt{\frac{\alpha^2}{4}+|G|} = c_{1,1}|G:H| - \frac{1}{2}\alpha 
    \end{equation*}
By squaring both sides,
\begin{equation*}
     \frac{\alpha^2}{4}+|G| = (c_{1,1})^2|G:H|^2 - \alpha c_{1,1}|G:H| +\frac{\alpha^2}{4}
\end{equation*}
Rearranging this for $\alpha$ gives the required equation, with the other conditions following from the condition that $\alpha$ must be a non-negative integer.
\end{proof}

\begin{proposition}\label{prop-2orbitnim}
    All irreducible NIM-reps over $K(G,\alpha)$ consisting of two group orbits are parametrised by tuples  $(H_1,H_2,c_{1,1},c_{2,2})$, where $H_1,H_2\subseteq G$ are subgroups and $c_{1,1},c_{2,2}\in \mZ_{\geq0}$, such that $\alpha = c_{1,1}|G:H_1| + c_{2,2}|G:H_2|$, $|G|$ divides $|H_1||H_2|$, and $(\frac{|H_1||H_2|}{|G|}+ c_{1,1}c_{2,2})$ is a square number.
\end{proposition}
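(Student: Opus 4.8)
The plan is to reduce everything to a $2\times 2$ matrix equation and solve it directly, exploiting the elementary fact that, for a \emph{non-scalar} $2\times 2$ matrix, the relation $(CB)^2=\alpha(CB)+|G|\cdot I$ is nothing more than a prescription for the trace and determinant of $CB$. First I would observe that a two-orbit NIM-rep over $K(G,\alpha)$ is precisely the data of two subgroups $H_1,H_2\subseteq G$ governing the two group orbits (in the sense of \Cref{sec:groupNIM}) together with the symmetric non-negative integer matrix $C=(c_{i,j})$ recording the action of $X$ via \Cref{actionX}, subject to the single constraint \Cref{eq-CBC}, equivalently $(CB)^2=\alpha(CB)+|G|\cdot I$ with $B=\mathrm{diag}(|G:H_1|,|G:H_2|)$. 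All remaining $\mZ_+$-module axioms are automatic: on each orbit the $G$-action is the coset action of \Cref{sec:groupNIM}; the defining relations $gX=Xg=X$ of the near-group fusion ring only force $g$ to act trivially on the orbit-sums occurring in \Cref{actionX}, which holds by \Cref{ngXsame}; and the rigidity condition holds because $C$ is symmetric and the coset action is by permutations.

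For the forward direction, let $(A,M)$ be irreducible with two orbits. By \Cref{rmrk-irrepNIM} the orbits are linked by $X$, so $c_{1,2}\geq 1$, and hence $CB=\left(\begin{smallmatrix} c_{1,1}|G:H_1| & c_{1,2}|G:H_2| \\ c_{1,2}|G:H_1| & c_{2,2}|G:H_2|\end{smallmatrix}\right)$ is not scalar, so its minimal polynomial has degree $2$. This minimal polynomial divides $t^2-\alpha t-|G|$ (since $(CB)^2=\alpha(CB)+|G|\cdot I$, the form of \Cref{eq-CBC}) and it divides the characteristic polynomial $t^2-\mathrm{tr}(CB)\,t+\det(CB)$; all three being monic of degree $2$, they coincide. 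Comparing traces gives $\alpha=c_{1,1}|G:H_1|+c_{2,2}|G:H_2|$. Comparing determinants gives $-|G|=\det(CB)=(\det C)(\det B)=(c_{1,1}c_{2,2}-c_{1,2}^2)\,|G:H_1|\,|G:H_2|=(c_{1,1}c_{2,2}-c_{1,2}^2)\tfrac{|G|^2}{|H_1||H_2|}$, that is $c_{1,2}^2=c_{1,1}c_{2,2}+\tfrac{|H_1||H_2|}{|G|}$. Since the left side is a non-negative integer, $|G|$ divides $|H_1||H_2|$ and $\tfrac{|H_1||H_2|}{|G|}+c_{1,1}c_{2,2}$ is a square number.

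Conversely, given $(H_1,H_2,c_{1,1},c_{2,2})$ with the three stated properties, I would set $c_{1,2}:=\sqrt{\,c_{1,1}c_{2,2}+|H_1||H_2|/|G|\,}\in\mZ_{\geq 0}$ and $\alpha:=c_{1,1}|G:H_1|+c_{2,2}|G:H_2|\in\mZ_{\geq 0}$. As $|G|$ divides $|H_1||H_2|$ and $|H_1||H_2|\geq 1$, we get $|H_1||H_2|/|G|\geq 1$, so $c_{1,2}\geq 1$; thus the $X$-action of \Cref{actionX} connects the two orbits (in particular each $X\act m^i_l\neq 0$, so we avoid the degenerate case of \Cref{nim0}) and the module is irreducible. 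By the first paragraph the only remaining check is \Cref{eq-CBC}, i.e. $(CB)^2=\alpha(CB)+|G|\cdot I$; but with $C=\left(\begin{smallmatrix} c_{1,1} & c_{1,2} \\ c_{1,2} & c_{2,2}\end{smallmatrix}\right)$ and $B=\mathrm{diag}(|G:H_1|,|G:H_2|)$ one computes $\mathrm{tr}(CB)=\alpha$ and $\det(CB)=(c_{1,1}c_{2,2}-c_{1,2}^2)\tfrac{|G|^2}{|H_1||H_2|}=-|G|$, so the Cayley--Hamilton theorem gives $(CB)^2-\alpha(CB)-|G|\cdot I=0$ at no cost. The two passages are mutually inverse up to the obvious identifications (replacing an $H_i$ by a conjugate subgroup, and interchanging $(H_1,c_{1,1})$ with $(H_2,c_{2,2})$), which yields the stated parametrisation.

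The main obstacle is conceptual rather than computational: the key is to notice that, once $\mathrm{tr}(CB)$ and $\det(CB)$ are pinned down, \Cref{eq-CBC} holds automatically by Cayley--Hamilton, so that the whole content of the proposition is the translation of ``$\det(CB)=-|G|$ with $c_{1,2}$ a non-negative integer'' into the divisibility and square-number conditions. The steps requiring genuine care are the non-scalarity argument behind ``minimal polynomial $=$ characteristic polynomial $=$ prescribed quadratic'' and being exhaustive about the $\mZ_+$-module axioms in the converse, in particular confirming that the mixed relations $gX=Xg=X$ impose nothing beyond \Cref{ngXsame}.
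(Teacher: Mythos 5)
Your proof is correct, and it arrives at the same two constraints ($\alpha = c_{1,1}|G:H_1|+c_{2,2}|G:H_2|$ and $c_{1,2}^2 = c_{1,1}c_{2,2}+|H_1||H_2|/|G|$) by a genuinely different route. The paper works from the solved form $CB=\tfrac{1}{2}\alpha I+\sqrt{\alpha^2/4+|G|}\,Y$ of \Cref{eq-CBY}, invokes the classification of $2\times 2$ square roots of the identity, discards the diagonal ones via irreducibility, and then manipulates the resulting system of four scalar equations (adding the first two for the trace condition, multiplying pairs and using $y_{1,1}^2+y_{1,2}y_{2,1}=1$ for the determinant condition). You instead observe that irreducibility forces $c_{1,2}\geq 1$, hence $CB$ is non-scalar, so its minimal polynomial is exactly $t^2-\alpha t-|G|$ and must coincide with its characteristic polynomial; reading off $\mathrm{tr}(CB)=\alpha$ and $\det(CB)=-|G|$ then yields both constraints in one stroke. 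Your version is more elementary — it bypasses the square-root classification, the sign ambiguity in $\pm Y$, and any bookkeeping with the possibly irrational quantity $\sqrt{\alpha^2/4+|G|}$ — and your converse direction is actually more complete than the paper's: Cayley--Hamilton shows that prescribing the trace and determinant already forces \Cref{eq-CBC}, whereas the paper essentially only runs the necessity direction before writing down the explicit $X$-action. The one step you assert rather than fully expand is that \Cref{eq-CBC} together with the two coset actions exhausts the $\mZ_+$-module and rigidity axioms, but your justification (orbit-sums are $G$-invariant, so $gX=Xg=X$ imposes nothing new, and rigidity reduces to the symmetry of $C$) is correct and is exactly what the paper implicitly relies on as well.
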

\begin{proof}
    It is well known that all square root $Y$s of the 2-by-2 identity matrix are either diagonal matrices whose non-zero elements are from the set $\{-1,1\}$, or have the form
    \begin{equation}\label{eq-Y2orbit}
        Y = \begin{pmatrix}
            y_{1,1} & y_{1,2} \\
            y_{2,1} & -y_{1,1}
        \end{pmatrix} \text{ where } y_{1,1}^2 + y_{1,2}y_{2,1} = 1, \quad y_{1,2},y_{2,1}\neq 0. 
    \end{equation}
It is clear by \Cref{rmrk-irrepNIM} that irreducible NIM-reps can only come from square root matrices of the form in \Cref{eq-Y2orbit}.

Let $H_1,H_2$ be subgroups of $G$ that govern the group orbits in the NIM-rep, and $C=\{c_{i,j}\}_{1\leq i,j\leq 2}$ the matrix governing the NIM-action of $X$.
By inputting this data into \Cref{eq-CBY}, we get the following system of equations:
\begin{align*}
    &c_{1,1}|G:H_1| = \frac{1}{2}\alpha + \sqrt{\frac{\alpha^2}{4}+|G|}y_{1,1}\\
    &c_{2,2}|G:H_2| = \frac{1}{2}\alpha - \sqrt{\frac{\alpha^2}{4}+|G|}y_{1,1}\\
    &c_{1,2}|G:H_2| = \sqrt{\frac{\alpha^2}{4}+|G|}y_{1,2}\\
    &c_{1,2}|G:H_1| = \sqrt{\frac{\alpha^2}{4}+|G|}y_{2,1}
\end{align*}

By adding the first two equations, we get that $\alpha = c_{1,1}|G:H_1|+c_{2,2}|G:H_2|$. 
By multiplying the first two equations together and the last two equations together, we obtain
\begin{align*}
    &c_{1,1}c_{2,2}|G:H_1||G:H_2| = \frac{\alpha^2}{4}-\Bigg(\frac{\alpha^2}{4}+|G|\Bigg)(y_{1,1})^2,\\
    &(c_{1,2})^2|G:H_1||G:H_2| = \Bigg(\frac{\alpha^2}{4}+|G|\Bigg)y_{1,2}y_{2,1}.    
\end{align*}
By using the defining relation in \Cref{eq-Y2orbit}, and rearranging to solve for $(c_{1,2})^2$, we find that
\begin{align*}
    (c_{1,2})^2= \frac{|H_1||H_2|}{|G|} + c_{1,1}c_{2,2}.
\end{align*}
Hence we can always obtain $c_{1,2}$ from the other input data.
The remaining conditions come from the fact that $c_{1,2}$ must be a positive integer.

Hence, a two-orbit NIM-rep over $K(G,\alpha)$ has input data of $(H_1,H_2,c_{1,1},c_{2,2})$ satisfying the above conditions, and explicit $X$-action given by 
\begin{align*}
    X \act m^1_i = c_{1,1}\sum_{k=1}^{|G:H_1|}m^1_k + \sqrt{\frac{|H_1||H_2|}{|G|} + c_{1,1}c_{2,2}}\sum_{k=1}^{|G:H_2|}m^2_k,\\
     X \act m^2_i =  \sqrt{\frac{|H_1||H_2|}{|G|} + c_{1,1}c_{2,2}}\sum_{k=1}^{|G:H_1|}m^1_k + c_{2,2}\sum_{k=1}^{|G:H_2|}m^2_k .
\end{align*}
\end{proof}

While this only completes the classification for NIM-reps consisting of two group orbits, this is sufficient to completely classify irreducible NIM-reps over the Tambara-Yamagami fusion rings.

\begin{corollary}\label{cor-TY}
    All irreducible NIM-reps over the Tambara-Yamagami fusion ring $K(G,0)$ consist of at most two group orbits.
\end{corollary}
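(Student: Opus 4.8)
The plan is to exploit the quadratic matrix equation \Cref{eq-CBC}, which for $\alpha = 0$ reads $CBC = |G|\,B^{-1}$, and to squeeze out of it a rigid structural constraint on the symmetric non-negative (integer, but we only use non-negative) matrix $C = \{c_{i,j}\}_{1\le i,j\le p}$. First I would symmetrise the problem: since $B = \mathrm{diag}\{|G:H_i|\}$ is positive diagonal, set $D := B^{1/2} C B^{1/2}$, which is a real symmetric matrix with non-negative entries, and compute directly
$D^2 = B^{1/2} C B C B^{1/2} = B^{1/2}(CBC)B^{1/2} = |G|\, B^{1/2} B^{-1} B^{1/2} = |G|\, I$.
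Hence $E := |G|^{-1/2} D$ is a symmetric matrix with non-negative entries satisfying $E^2 = I$.

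The crux is then a short piece of linear algebra: any symmetric $E \in M_p(\mR)$ with non-negative entries and $E^2 = I$ is the permutation matrix $P_\sigma$ of an involution $\sigma \in S_p$. Reading off the $(i,i)$-entry of $E^2 = I$ and using $E = E^{T}$ gives $\sum_k E_{ik}^2 = 1$, so row $i$ has nonempty support; reading off an off-diagonal entry, for $i \ne j$ we get $\sum_k E_{ik}E_{kj} = 0$, and since every summand is $\ge 0$ each vanishes, so (using symmetry) the supports of rows $i$ and $j$ are disjoint. Thus the $p$ row-supports are pairwise disjoint nonempty subsets of a $p$-element set, forcing each to be a singleton $\{\sigma(i)\}$ with $\sigma$ a bijection, and then $E_{i\sigma(i)}^2 = 1$ with $E_{i\sigma(i)}\ge 0$ gives $E_{i\sigma(i)} = 1$; so $E = P_\sigma$, and $E^2 = I$ forces $\sigma^2 = \mathrm{id}$.

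Translating back, $C = |G|^{1/2}\, B^{-1/2} P_\sigma B^{-1/2}$, so $c_{i,j} > 0$ precisely when $j = \sigma(i)$. By \Cref{rmrk-irrepNIM}, irreducibility of the NIM-rep forces the $p$ group orbits to be connected to one another through the action of $X$; that is, the graph on $\{1,\dots,p\}$ with an edge joining $i \ne j$ whenever $c_{i,j} > 0$ must be connected. But this edge relation is exactly ``$j = \sigma(i)$'', and since $\sigma$ is an involution this graph is a disjoint union of isolated vertices (fixed points of $\sigma$) and single edges (transpositions); such a graph is connected only if it has at most two vertices. Hence $p \le 2$, which is the claim.

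I expect the only real obstacle to be making the linear-algebra lemma of the second paragraph fully watertight — in particular the pigeonhole step from ``pairwise disjoint nonempty supports in a $p$-set'' to ``singletons'', and checking no row can a priori have larger support. Everything else is routine: a direct manipulation of the already-proved equation \Cref{eq-CBC} together with an appeal to \Cref{rmrk-irrepNIM}. It is worth remarking that integrality of the $c_{i,j}$ plays no role in this corollary (only non-negativity and the equation do); integrality is what later cuts the infinitely many formal solutions down to the genuine examples.
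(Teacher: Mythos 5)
Your proof is correct, and it takes a somewhat different route from the paper's even though both start by setting $\alpha=0$ in \Cref{eq-CBC} to get the diagonal equation $CBC=|G|\,B^{-1}$. The paper extracts only the off-diagonal information: $(CBC)_{i,j}=0$ for $i\neq j$ forces each non-negative summand $c_{i,k}c_{k,j}$ to vanish, and then a connected orbit graph on $p\geq 3$ vertices must contain some $k$ with two distinct neighbours $i\neq j$, giving a contradiction. You instead symmetrise to $E=|G|^{-1/2}B^{1/2}CB^{1/2}$ and prove the stronger classification that a symmetric non-negative solution of $E^2=I$ is the permutation matrix of an involution $\sigma$; your pigeonhole step (pairwise disjoint nonempty row-supports in a $p$-set are singletons) is watertight as sketched, and the diagonal entries of $E^2=I$ supply the nonemptiness you need. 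This buys strictly more than the corollary asks for: $C=\sqrt{|G|}\,B^{-1/2}P_\sigma B^{-1/2}$ pins down the whole $X$-action, giving $c_{i,\sigma(i)}=\sqrt{|H_i||H_{\sigma(i)}|/|G|}$ and zero elsewhere, consistent with \Cref{prop-2orbitnim}, and your closing observation that only non-negativity (not integrality) is used is accurate. The cost is the extra linear-algebra lemma, which the paper's lighter termwise argument avoids. Both proofs invoke \Cref{rmrk-irrepNIM} in the same way at the end. One incidental point in your favour: the paper's displayed formula writes the weight $|G:H_i|$ inside the sum where it should be $|G:H_k|$; your symmetrised formulation sidesteps that slip.
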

\begin{proof}
   By setting $\alpha = 0$ in \Cref{eq-CBC}, we easily see that the matrix $CBC$ must be diagonal. Element-wise, this means that 
   \begin{equation*}
       (CBC)_{i,j} = \sum_{k=1}^pc_{i,k}c_{k,j}|G:H_i| = 0,
   \end{equation*}
   for all $i \neq j$. However, by \Cref{rmrk-irrepNIM}, there always exist a choice of $i\neq j$ such that both $c_{i,k},c_{i,j}\neq 0$ when $p\geq 3$. Thus, as $C$ is symmetric, the result follows.
\end{proof}
    
\begin{example}(NIM-reps over the Ising fusion ring)

Recall \Cref{ising}. The Ising fusion ring can be viewed as the Tambara-Yamagami fusion ring $K(\mZ_2,0)$. So by \Cref{cor-TY}, we only need to check for 1 and 2-orbit NIM-reps.

When $p=1$, we must have $c_{1,1}|\mZ_2:H| = |H|/c_{1,1}$ by \Cref{prop-1orbitnim}. But as the only choices for $H$ are the trivial group and $\mZ_2$, both of which lead to non-integer values for $c_{1,1}$, we see that there are no 1-orbit NIM-reps over $K(\mZ_2,0)$.

For $p=2$, by \Cref{prop-2orbitnim} we must have $0=c_{1,1}|G:H_1| + c_{2,2}|G:H_2|$, which only occurs when $c_{1,1}=c_{2,2}=0$. Hence, we see that $(c_{1,2}^2) = \frac{|H_1||H_2|}{|G|}$, which must be an square number.
The only possible choice of subgroups is $H_1\cong\{1\},H_2 \cong \mZ_2$ (or vice versa, which gives an equivalent NIM-rep). In this case, $c_{1,2}=1$. 

Thus, there is only one irreducible NIM-rep over the Ising fusion ring, given by the tuple $(\{1\},\mZ_2,0,0)$.
\end{example}

For $p\geq 3$, there is not an explicit classification of square roots of the identity matrix. For instance, it is still unknown whether there exists a Hadamard matrix of order $4k$ for every positive integer $k$. So, in what follows we will detail some relevant examples.

\begin{example}
    A Hadamard matrix of order $n$ satisfies $H_nH_n = nI_n$, and rows are mutually orthogonal and the matrix contains only $+1$ and $-1$, so clearly $\frac{1}{\sqrt{n}}H_n$ is a square root of the identity. However, $H_n$ contains exactly $n(n-1)/2$ elements that are $-1$, which contradicts the fact all non-diagonal elements of $Y$ must be non-negative, of which there are $n(n-1)$. So the only value valid is $n=1$, which is the trivial case.
\end{example}

\begin{example}(NIM-rep over $K(\mZ_q,q-1), q$ prime)

    Consider the $p$-by-$p$ matrix that has $-\frac{1}{2}\alpha$ as its diagonal elements, and $1$ for all off-diagonal elements;
    \begin{equation}\label{ex-Zp}
        Z_p = \begin{pmatrix}
            -\frac{1}{2}\alpha & 1 & \hdots &1\\
            1 & \ddots & \ddots & \vdots\\
            \vdots & \ddots  & \ddots&1\\
            1 & \hdots & 1& -\frac{1}{2}\alpha
        \end{pmatrix} 
    \end{equation}
If there is a NIM-rep that corresponds to this matrix, it must have $c_{i,i}=0$ for all i by looking \Cref{eq-CBY}
    
We want to find values of $\alpha$ and $G$ such that this matrix is a square root of $(\frac{\alpha^2}{4}+|G|)\cdot I_p$. It is easily verified that that
\begin{equation*}
    (Z_p^2)_{i,i} = \frac{\alpha^2}{4} + p-1, \quad (Z_p^2)_{i,j} = p-2-\alpha, \; i\neq j.
\end{equation*}
Thus, the matrix $Z_p$ may give a NIM-rep via \Cref{eq-CBY} only if $\alpha = p-2$ and $|G| = p-1$. Thus $Z_p$ may give a NIM-rep over $K(G,p-2)$, where $|G|=p-1$. The existence of such a NIM-rep depends on the particular choice of group, but we can provide the case such that $p=q+1$, where $q$ is prime.

The only group of order $q$ is the cyclic group $\mZ_q$, so $Z_{q+1}$ defines a NIM-rep only over $K(\mZ_q,q-1)$. By putting \Cref{ex-Zp} into \Cref{eq-CBY}, it is clear that $c_{i,j}|G:H_j| = 1$ for all $i\neq j$. As the only subgroups of $\mZ_q$ are the trivial subgroup and the group itself, the only choice that leads to an integer value of $c_{i,j}$ is if $H_j \cong \mZ_q$ for all $j$. This gives $c_{i,j}=1$. 

Thus, for $q$ prime, we have a NIM-rep $(A,M)$ over $K(\mZ_q,q-1)$ where the $M$ consists of $q+1$ elements $\{m^{i}\}_{1\leq i\leq q+1}$, the $\mZ_q$-action on $M$ is trivial, and the action of the non-invertible element $X$ is given by
\begin{equation*}
    X \act m^i = \sum_{\substack{j=1 \\ j\neq i}}^{q+1}m^j.
\end{equation*}
    
\end{example}

Note that to look at the possible case that a NIM-rep over $K(G,\alpha)$ can be partitioned into three or more orbits, one would need to find symmetric integer matrix solutions to the matrix equation. We can use the following result to reduce the options in some circumstances.
\begin{proposition}
    A NIM-rep over $K(G,\alpha)$ consists of an odd number of group orbits if and only $\sqrt{\frac{\alpha^2}{4}+|G|}$ is a rational number. 
\end{proposition}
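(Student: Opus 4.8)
The plan is to reduce the statement to the spectral theory of the involution matrix governing the $X$-action. Recall from the discussion around \Cref{eq-CBY} that for a NIM-rep over $K(G,\alpha)$ with $p$ group orbits one has $CB=\tfrac12\alpha\,I+s\,Y$, where $s:=\sqrt{\tfrac{\alpha^2}{4}+|G|}$, the matrix $Y$ is a real $p\times p$ square root of the identity, and $CB$ has non-negative integer entries $c_{i,j}|G:H_j|$. (The identity \Cref{eq-CBC}, and hence this expression for $CB$, is valid for an arbitrary NIM-rep: the proof of \Cref{ngXsame} only uses that two basis elements lie in the same $G$-orbit, not irreducibility.) Since $t^2-1$ has simple roots, $Y$ is diagonalisable with all eigenvalues in $\{\pm1\}$; writing $a,b$ for the multiplicities of $+1$ and $-1$ we get $a+b=p$ and $\operatorname{tr}(Y)=a-b\in\mZ$.

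For the forward implication I would compare two expressions for $\operatorname{tr}(CB)$. On one hand $\operatorname{tr}(CB)=\sum_i c_{i,i}|G:H_i|$ is a non-negative integer; on the other hand $\operatorname{tr}(CB)=\tfrac12\alpha p+s(a-b)$. Subtracting, $s(a-b)=\operatorname{tr}(CB)-\tfrac12\alpha p\in\mQ$. If $s$ is irrational this is possible only when $a-b=0$, i.e.\ $a=b$, forcing $p=2a$ to be even. Contrapositively, if a NIM-rep over $K(G,\alpha)$ has an odd number of group orbits, then $s$ must be rational.

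For the converse I would construct an explicit one-orbit NIM-rep (one is odd). Suppose $s\in\mQ$; then $2s=\sqrt{\alpha^2+4|G|}$ is a positive integer, and since $(2s)^2-\alpha^2=4|G|$ is even we have $2s\equiv\alpha\pmod 2$, so $c:=\tfrac{\alpha}{2}+s=\tfrac{\alpha+2s}{2}\in\mZ_{>0}$. This $c$ is precisely the positive root of $t^2-\alpha t-|G|=0$. Taking $H=G$ (so $|G:H|=1$ and $|H|=|G|$), one then checks directly that $\alpha=c-|G|/c=c\,|G:H|-|H|/c$; that $|G|=c(c-\alpha)$, so $c$ divides $|H|$; and that $c^2|G:H|=c^2\ge c(c-\alpha)=|H|$ because $\alpha\ge0$. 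Thus $(G,c)$ meets every hypothesis of \Cref{prop-1orbitnim} and therefore defines a NIM-rep over $K(G,\alpha)$ with a single group orbit.

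The computational heart is the trace identity; I expect the only fiddly points to be the remark that \Cref{eq-CBC} applies to reducible NIM-reps as well (needed so that $p$ genuinely counts all orbits), and the parity check $2s\equiv\alpha\pmod2$ that makes $c$ an integer in the converse. Both are short, so I do not anticipate a serious obstacle.
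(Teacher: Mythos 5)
Your proof is correct, and both halves take a genuinely different route from the paper's. For the direction ``odd number of orbits $\Rightarrow$ $s:=\sqrt{\alpha^2/4+|G|}$ rational'', the paper computes the \emph{determinant} of $sY=CB-\tfrac{1}{2}\alpha I$: it is rational and equals $\pm s^{p}$, which for odd $p$ forces $s\in\mQ$. Your \emph{trace} identity $\operatorname{tr}(CB)=\tfrac{1}{2}\alpha p+s(a-b)$ reaches the same conclusion and yields slightly more: if $s$ is irrational then the $+1$- and $-1$-eigenspaces of $Y$ must have equal dimension, not merely the same parity. For the converse, the paper only asserts that the determinant is rational ``if and only if'' $s$ is --- an equivalence valid only for odd $p$, since for even $p$ the determinant $\pm s^{p}$ is automatically rational --- so the paper really proves just one implication; read literally as a biconditional about a fixed NIM-rep the statement even fails, as the paper's own two-orbit example $(\mZ_{75},\mZ_{75},5,5)$ over $K(\mZ_{75},10)$ has $s=10$ rational. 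Your reading of the converse as an existence claim, settled by exhibiting the one-orbit NIM-rep $(H,c_{1,1})=(G,\tfrac{\alpha}{2}+s)$ and verifying the hypotheses of \Cref{prop-1orbitnim}, is the natural way to make the ``if'' direction true and supplies an argument absent from the paper; the parity check making $\tfrac{\alpha}{2}+s$ an integer and the divisibility and inequality conditions all go through as you describe. Your preliminary observation that \Cref{eq-CBC} does not require irreducibility (the proof of \Cref{ngXsame} uses only membership in a common $G$-orbit) is likewise a point the paper glosses over, and it is needed for $p$ to count all orbits of a general NIM-rep.
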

\begin{proof}
    As the matrix $CB$ contains only integers, from \Cref{eq-CBY} we see that the matrix $\sqrt{(\frac{\alpha^2}{4}+|G|)} \cdot Y$ contains only rational numbers. Its determinant is then rational. But $Y$ is a square root of the identity matrix, that has determinant $\pm 1$. Hence, the determinant of $\sqrt{(\frac{\alpha^2}{4}+|G|)} \cdot Y$ is equal to $(\frac{\alpha^2}{4}+|G|)^{p/2}$. The result is then immediate, as the rational numbers are closed under multiplication, so this determinant is rational if and only if  $(\frac{\alpha^2}{4}+|G|)^{1/2}$ is rational.
\end{proof}

We can also express $\alpha$ in terms of elements of $C$ and $B$:
\begin{corollary}
    Suppose $(A,M)$ is an irreducible NIM-rep over $K(g,\alpha)$ that can be partitioned into $p$ $G$-orbits.
    \begin{enumerate}
        \item[] If $p=1$, then $\alpha= c_{1,1}|G:H_1| - |H_1|/c_{1,1}$.
        \item[] If $p\geq 2$. Then $\alpha = \sum_{j=1}^p \frac{c_{i,j}c_{j,q}}{c_{i,q}}|G:H_j|$ for any orbit labels $i\neq q$ where $c_{i,q}>0$.  
    \end{enumerate}
\end{corollary}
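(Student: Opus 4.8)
The plan is to read both identities straight off the two scalar consequences \Cref{CBC-1} and \Cref{CBC-2} of the matrix equation $CBC = \alpha\, C + |G|\, B^{-1}$, so the corollary is essentially a bookkeeping restatement of \Cref{eq-CBC} at the level of entries.

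For $p=1$, I would specialise \Cref{CBC-1} to the single orbit governed by $H_1$, where it reads $|H_1| + \alpha c_{1,1} = c_{1,1}^2 |G:H_1|$. Since we have discarded the trivial NIM-rep, \Cref{nim0} together with \Cref{rigidity} forces $c_{1,1} = (X\act m^1_l, m^1_k) > 0$, so I may divide through by $c_{1,1}$ and rearrange to get $\alpha = c_{1,1}|G:H_1| - |H_1|/c_{1,1}$. This is the same manipulation already carried out in the proof of \Cref{prop-1orbitnim}, now recorded without passing through \Cref{eq-CBY}.

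For $p\geq 2$, I would instead invoke \Cref{CBC-2}: for any two distinct orbit labels $i\neq q$ one has $\alpha c_{i,q} = \sum_{j=1}^p c_{i,j}c_{j,q}|G:H_j|$. The only thing to verify is that there is at least one such pair with $c_{i,q}>0$, and this is exactly \Cref{rmrk-irrepNIM}: irreducibility means the $p$ group orbits are linked by the action of $X$, so the symmetric matrix $C$ is not block-diagonal and has a positive off-diagonal entry. Dividing by $c_{i,q}$ for any such pair yields the claimed formula $\alpha = \sum_{j=1}^p \tfrac{c_{i,j}c_{j,q}}{c_{i,q}}|G:H_j|$, and the identity holds simultaneously for every admissible choice of $(i,q)$.

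The argument is entirely routine arithmetic; the only point worth flagging is the non-vacuity remark in the $p\geq 2$ case — one should state explicitly (via \Cref{rmrk-irrepNIM}) that a pair with $c_{i,q}>0$ exists, since otherwise the asserted expression would be empty. No genuine obstacle is expected beyond this.
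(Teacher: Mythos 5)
Your proposal is correct and follows essentially the same route as the paper: rearrange \Cref{CBC-1} for the $p=1$ case (after noting $c_{1,1}>0$) and \Cref{CBC-2} for $p\geq 2$, using \Cref{rmrk-irrepNIM} to guarantee a pair $(i,q)$ with $c_{i,q}>0$. Your explicit justification that $c_{1,1}>0$ via \Cref{nim0} is a welcome touch of added care, but the argument is the same.
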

\begin{proof}
    For the $p=1$ case, we simply rearrange \Cref{CBC-1} for $\alpha$. 
    The $p=2$ case is just as immediate, instead  rearranging \Cref{CBC-2}. Using the fact $c_{i,1}$ is non-zero by \Cref{rmrk-irrepNIM}, we always have at least one choice of matrix elements to write $\alpha$ in this form.
\end{proof}

We conclude this section with some concrete examples.

\begin{example} 

An example of such a NIM-rep is given by the tuple $(\mZ_{75},\mZ_{75},5,5)$ over the fusion ring $K(\mZ_{75}, 10)$. As the two subgroups that govern the partition of the NIM-rep are the whole group, these orbits consist of one NIM-element each, so $M=\{m,n\}$.

By the found formulas, $c_{1,1} = c_{2,2} = 5, c_{1,2} = 10$ and so the NIM-rep has the following structure;
\begin{align*}
    g \act m = m \quad &, \quad g \act n = n & \forall g \in \mZ_{75},\\
    X \act m = 5m + 10 n \quad &, \quad X \act n = 10m + 5n. &
\end{align*}
\end{example}

\begin{example}
    $K(\mZ_{175},62)$ has a NIM-rep corresponding to $(\mZ_{35}, \mZ_{25}, 11,1)$. Labelling the two group orbits by $\{m_i\}_{1\leq i \leq 5}$, ${n_j}_{1\leq j\leq 7}$, the explicit structure of the non-invertible element is given by 
    \begin{align*}
        X \act m_i = 11\sum_{l=1}^5 m_l + 4 \sum_{k=1}^7 n_k \quad &, \quad X \act n_j  = 4 \sum_{l=1}^5m_l + \sum_{k=1}^7 n_k .
    \end{align*}
\end{example}

\subsection{Admissible NIM-Reps and Algebra Objects for Pointed and Near-Group Fusion Categories}\label{sec:admiss}

Finally, we extract algebra objects from the NIM-reps we have computed. While we cannot read out explicit algebra morphisms, we still are able to recover a collection of familiar results.

\begin{proposition}{(Admissible NIM-reps for group rings)}
    All irreducible NIM-reps over a group fusion rings $R(G)$ are admissible.
\end{proposition}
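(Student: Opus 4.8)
The plan is to exhibit the required module categories explicitly. Realise the abstract fusion ring $R(G)$ as the Grothendieck ring of the pointed fusion category $\Vect_G$ of finite-dimensional $G$-graded $\mathbbm{k}$-vector spaces with trivial associator, whose simple objects $\delta_g$ ($g\in G$) satisfy $\delta_g\otimes\delta_{g'}=\delta_{gg'}$, and work with $\cC=\Vect_G$. By \Cref{groupclassify} and the discussion following it, every irreducible NIM-rep over $R(G)$ is equivalent to $M(H)$ for some subgroup $H\subseteq G$, with basis $\{m_{gH}\}_{gH\in G/H}$ and action $x\act m_{gH}=m_{xgH}$; note in passing that the necessary condition of \Cref{prop-NIMadmis} is met with $m_0=m_{eH}$. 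So it suffices, for each $H\subseteq G$, to produce a separable algebra $A$ in $\Vect_G$ such that $\mathrm{Gr}(\Mod_{\Vect_G}-A)\isomorph M(H)$ in the sense of \Cref{catnim}.

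The candidate is the group-algebra object $A_H:=\bigoplus_{h\in H}\delta_h$, with multiplication $\delta_h\otimes\delta_{h'}\mapsto\delta_{hh'}$ and unit $\delta_e\hookrightarrow A_H$. First I would verify that $A_H$ is a separable algebra: associativity and unitality are inherited from the group law, and the morphism $\delta_h\mapsto\frac{1}{|H|}\sum_{h'\in H}\delta_{h(h')^{-1}}\otimes\delta_{h'}$ splits the multiplication in the sense of the separability axiom, using that $|H|$ is invertible in $\mathbbm{k}$ (valid since $\cha \mathbbm{k}\nmid|G|$). By \cite{EGNO}*{Proposition 7.8.30}, $\Mod_{\Vect_G}-A_H$ is then a semisimple $\Vect_G$-module category. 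Next I would pin down its simple objects: for each coset $gH$, put $M_{gH}:=\bigoplus_{x\in gH}\delta_x$ with the evident right $A_H$-action; these are simple, pairwise non-isomorphic (disjoint supports), and exhaust the simples. Finally, the left $\Vect_G$-module structure on $\Mod_{\Vect_G}-A_H$ satisfies $\delta_x\otimes M_{gH}\cong M_{xgH}$, so $m_{gH}\mapsto[M_{gH}]$ is an isomorphism of NIM-reps $M(H)\isomorph\mathrm{Gr}(\Mod_{\Vect_G}-A_H)$. Hence $M(H)$ is admissible, and as $H$ was arbitrary the proposition follows.

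The main obstacle is the structural step of identifying the simple objects of $\Mod_{\Vect_G}-A_H$. The key point is that the $G$-grading rigidifies the module structure: any $G$-graded right $A_H$-submodule of an object is determined by its support, which (being stable under right multiplication by $H$) is a union of cosets of the form $gH$; consequently $A_H=M_{eH}$ is simple, every $M_{gH}\cong\delta_g\otimes A_H$ is simple, and a general simple right $A_H$-module is forced to be supported on a single coset with one-dimensional graded pieces. Here one must also keep the left/right coset conventions aligned with the definition of $M(H)$. A minor point is the separability of $A_H$, which would fail if $\cha \mathbbm{k}$ divided $|G|$, but that is excluded by the fusion hypothesis. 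I would close by observing that, under the identification $N=M_{eH}$, one has $A_H=\underline{\mathrm{Hom}}(N,N)$, so \Cref{thm-EGNOmodule} recognises $\Mod_{\Vect_G}-A_H$ as the module category generated by $N$ and recovers the expected group algebras, matching \cites{Nat,Ost}.
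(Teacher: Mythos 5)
Your proof is correct, and it takes a genuinely different --- and in fact stronger --- route than the paper's. The paper only verifies the generation condition of \Cref{prop-NIMadmis}, which is stated there as a \emph{necessary} condition for admissibility, and then concludes admissibility from it; you instead certify admissibility directly from its definition by exhibiting, for each subgroup $H\subseteq G$, a separable algebra $A_H=\bigoplus_{h\in H}\delta_h$ in $\Vect_G$ whose category of right modules realises $M(H)$ in the sense of \Cref{catnim}. The structural core of your argument is sound: every subobject in $\Vect_G$ is graded, the support of an $A_H$-submodule is a union of left cosets $gH$, and invertibility of the $\delta_h$ forces a simple module to have constant one-dimensional graded pieces on a single coset, so the simples are exactly the $M_{gH}$ and $\delta_x\otimes M_{gH}\cong M_{xgH}$ recovers the NIM-action; the identification $A_H=\underline{\mathrm{Hom}}(M_{eH},M_{eH})$ closes the loop with \Cref{thm-EGNOmodule}. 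Two caveats. First, your justification of separability is off: $\Vect_G$ is a fusion category in \emph{any} characteristic, so ``the fusion hypothesis'' does not exclude $\cha\mathbbm{k}$ dividing $|H|$; the splitting $\delta_h\mapsto\frac{1}{|H|}\sum_{h'}\delta_{h(h')^{-1}}\otimes\delta_{h'}$ requires $\cha\mathbbm{k}\nmid|H|$ as a genuine hypothesis (implicit throughout the paper, e.g. characteristic zero), and in characteristic $p\mid|H|$ the conclusion can fail. Second, you rightly fix the trivial associator: for $\Vect_G^\omega$ with nontrivial $\omega$ the object $\bigoplus_{h\in H}\delta_h$ admits an algebra structure only when $\omega|_H$ is cohomologically trivial, so the proposition is sensitive to the choice of categorification of $R(G)$; your reading (admissibility with respect to the untwisted pointed category) is the one consistent with the paper's comparison to \cites{Nat,Ost}. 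What your approach buys is an actual witness for admissibility rather than a check of a one-directional criterion; what the paper's approach buys is brevity, at the cost of leaving the converse implication implicit.
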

\begin{proof}
   Let $M(H)$ be a NIM-rep over $R(G)$. Recall that the basis elements are parametrised by the left cosets $G/H$. For any two NIM-basis elements $m_{g_i}$, $m_{g_j}$, we can have that $(g_jg_i^{-1})\act m_{g_i} = m_{g_j}$. Hence any NIM-basis element can take the role of $m_0$ in \Cref{prop-NIMadmis}, thus $M(H)$ is admissible.
\end{proof}
\begin{remark}
The algebra object associated to a NIM-rep $M(H)$ over $R(G)$ is then given by $\bigoplus_{h \in H}b_h$. It is clear from \Cref{ex-d3,ex-klein4} that this object can be seen by counting the self-loops present in the NIM-graph. This agrees with the classification of algebras in pointed categories given in \cites{Nat,Ost}.
\end{remark}

\begin{proposition}
    An admissible irreducible NIM-rep over a near-group fusion ring consists of either one group orbit, parameterised by $(H,c_{1,1})$, or two group orbits, parametrised by $(H_1,H_2,0,c_{2,2})$.
    \end{proposition}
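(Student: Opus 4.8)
The plan is to read the constraint directly off the admissibility criterion, \Cref{prop-NIMadmis}, of which we use only the \emph{necessary} direction. That proposition produces a basis element $m_0 \in M$ such that every $m_i \in M$ can be written as $m_i = b_j \act m_0$ for some \emph{single} element $b_j$ of the basis $B = G \cup \{X\}$. It is worth recalling from the proof of \Cref{prop-NIMadmis} that, because the NIM-action is non-negative and $b_j \act m_0 \neq 0$ (\Cref{nim0}), this is genuinely an equality of basis elements --- $b_j \act m_0$ is the single basis vector $m_i$, not merely a sum in which $m_i$ occurs. Relabelling the parts of the $G$-orbit partition of $M$, we may assume $m_0$ lies in the first part; then $\{\, g \act m_0 : g \in G \,\}$ is precisely that part, which has $|G:H_1|$ elements.

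First I would show $p \leq 2$. If $m_i$ is a basis element not in orbit~$1$, then it cannot be of the form $g \act m_0$ with $g \in G$, since that would place it in the orbit of $m_0$; hence $m_i = X \act m_0$. But $X \act m_0$ is one fixed vector of $A$, so there is at most one basis element outside orbit~$1$, and therefore the $G$-orbit partition of $M$ has at most two parts. If $p = 1$, the NIM-rep is one of those classified in \Cref{prop-1orbitnim}, parametrised by a pair $(H_1, c_{1,1})$, and we are in the first case of the statement.

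It remains to treat $p = 2$. By the previous paragraph the unique basis element outside orbit~$1$ equals $X \act m_0$ and constitutes the entire second orbit, so $|G:H_2| = 1$. Expanding $X \act m_0$ by means of \Cref{actionX} and using that it is a single basis vector lying in the second orbit forces its orbit-$1$ component to vanish, i.e.\ $c_{1,1} = (X \act m^1_l, m^1_k) = 0$ (and likewise $c_{1,2} = c_{2,1} = 1$). Thus the NIM-rep is parametrised by a tuple $(H_1, H_2, 0, c_{2,2})$ as in \Cref{prop-2orbitnim}, which is the second case of the statement; substituting $c_{1,1} = 0$ and $|G:H_2| = 1$ into the relation $(c_{1,2})^2 = \tfrac{|H_1||H_2|}{|G|} + c_{1,1} c_{2,2}$ from the proof of \Cref{prop-2orbitnim} moreover pins it down to $H_1 = \{e\}$ and $c_{2,2} = \alpha$. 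The only delicate point is the one flagged in the first paragraph --- that \Cref{prop-NIMadmis} yields an equality of \emph{basis} elements --- because it is exactly the rigidity of the single vector $X \act m_0$ that collapses everything outside the orbit of $m_0$; the remainder is routine bookkeeping.
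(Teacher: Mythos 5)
Your proof is correct and follows essentially the same route as the paper: admissibility via \Cref{prop-NIMadmis} forces every basis element outside the $G$-orbit of $m_0$ to equal the single vector $X \act m_0$, which bounds the number of orbits and kills $c_{1,1}$. You go slightly further than the paper in two harmless ways: you deduce $p\le 2$ directly from the uniqueness of $X\act m_0$ (the paper instead computes $X^2\act m_0$ and invokes irreducibility of the NIM-rep), and you extract the sharper parametrisation $H_1=\{e\}$, $H_2=G$, $c_{1,2}=1$, $c_{2,2}=\alpha$ --- i.e.\ the two-orbit admissible NIM-rep is the regular one --- which is a genuine refinement of the stated conclusion and consistent with the remark following the proposition.
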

\begin{proof}
For a NIM-rep consisting of one orbit, we can argue in the same way as \Cref{prop-NIMadmis} that any NIM-basis element can be set as $m_0$ in \Cref{prop-NIMadmis}.

For a NIM-rep consisting of more than one group orbit, the only way to have an $m_0$ that connects to the orbits it does not lie in is via the  non-invertible element $X$. So for $n \in M$, where $n$ is not in the same orbit as $m_0$, we must have $X \act m_0= n$. But then by the fusion rules
    \begin{equation*}
      X \act n = X^2 \act m_0 = \sum_{i \in G\cdot m_0} m_i + \alpha X \act m_0 = \sum_{i \in G\cdot m_0} m_i +\alpha n  
    \end{equation*}
From this we see that the group orbits of $m_0$ and $n$ are only connected to each other, so by the irreducibility of the NIM-rep it only contains two group orbits.

If we label the orbit containing $m_0$ by 1, and the one containing $n$ by 2, we note that the condition $X \act m_0 = n$ implies $c_{1,1} = 0$. This gives the result.
\end{proof}
\begin{remark}
We thus have two forms of algebra objects arising in categories associated to near-group fusion rings.
\begin{itemize}
    \item For a NIM-rep paramatrised by $(H,c_{1,1})$, the algebra is given as an object by $\oplus_{h \in H}b_h \oplus c_{1,1}X$.
    \item For a NIM-rep parametrised by $(H_1,H_2,0,c_{2,2})$, the algebra object is given by $\oplus_{h \in H_1}b_h$, i.e it is of the form of a group algebra object.
\end{itemize}
\end{remark}

Algebra objects representing module categories over near-group categories have been studied previously at \cites{MM,Gal}: in the case of non-group theoretical Tambara-Yamagami categories with abelian $G$ we have twisted group algebras, see \cite[Proposition 5.7]{Gal}. \cite[Section 8 and 9]{MM} proceed in a slightly more general way, for $G$-graded fusion categories with $G$ not necessarily abelian. Only for the case of Tambara-Yamagami (see \cite[Section 9]{MM}) they recover the same two families of algebras we observe.

\subsection{$(A_1,l)_{\frac{1}{2}}$ Fusion Rings and its Admissible NIM-Reps and Algebra Objects} 
Following \cite{NWZ} (other useful references are \cites{EP,FK}), we can construct a modular tensor category $(A_1,l)$ from a quantum group of type $A_1$ at level $l \in \mZ_{+}$. The Grothendieck ring $\mathrm{Gr}((A_1,l))$ has basis $\{V_i\}_{i \in [0,l]}$, and the fusion coefficients of $V_iV_j = \sum_{k=0}^l c_{i,j}^k V_k$ are given by 
\begin{equation}\label{eq-quantumfusion}
    c_{i,j}^k = \begin{cases}
        1, & \text{if } |i-j| \leq k \leq \mathrm{min}(i+j,2k-i-j) \text{ and } k \equiv i+j \;\mathrm{mod}\; 2,\\
        0, & \mathrm{else}
    \end{cases}
\end{equation}
We note here that, as seen by the fusion rules, $\mathrm{Gr}((A_1,l))$ is commutative.
\begin{definition}\label{def-length}
    For an object $V_i \in \mathrm{Gr}((A_1,l))$, we define the \textit{length} of $V_i$ to be $\mathrm{length(V_i):= \sum\limits_{k=0}^{l}}c_{i,j}^k.$
\end{definition}

When $l$ is a positive odd integer, we can define a modular subcategory $(A_1,l)_{\frac{1}{2}}$ by taking the full subcategory with simple objects 
\begin{equation*}
    \mathrm{Irr}((A_1,l)_{\frac{1}{2}})= \Bigg\{ V_{2i}\; | \; 0 \leq i \leq \frac{-1}{2} \Bigg \}.
\end{equation*}
\begin{remark}\label{rmk-length}
    In the fusion ring $\mathrm{Gr}((A_1,l)_{\mathrm{pt}})$, it is simple to check using the fusion rules that $V_i \neq V_j$ if and only if $\mathrm{length}(V_i) \neq \mathrm{length}(V_j)$.
\end{remark}

We shall now focus on finding the admissible NIM-reps of $\mathrm{Gr}((A_1,l)_{\frac{1}{2}})$. As $l=1$ results in the trivial ring, we shall assume $l\geq 3$. Recall from \Cref{prop-NIMadmis} that an admissible NIM-rep $(A,M)$ over a fusion ring has a distinguished basis element $m_0$. We have seen in \Cref{sec:admiss} that in the case of group and near-group fusion rings, we can have objects in the ring basis $b_i,b_j$ such that $b_i \act m_0 = b_j \act m_0$, due to the invertibility of the group parts of these fusion rings. This is not the case when working with $\mathrm{Gr}((A_1,l)_{\frac{1}{2}})$.
\begin{lemma}\label{lem-noninvert}
    Let $(A,M)$ be an admissible NIM-rep over $\mathrm{Gr}((A_1,l)_{\frac{1}{2}})$, and $V_i \neq V_j \in \mathrm{Gr}((A_1,l)_{\frac{1}{2}})$ such that $V_i \act m_0, V_j \act m_0 \in M$. Then $V_i \act m_0 \neq V_j \act m_0$.
\end{lemma}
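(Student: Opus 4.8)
The plan is to argue by contradiction, assuming that there exist $V_i \neq V_j$ in $\mathrm{Gr}((A_1,l)_{\frac{1}{2}})$ with $V_i \act m_0 = V_j \act m_0 \in M$. The key point I want to exploit is the structure of the fusion rules in \Cref{eq-quantumfusion}, in particular the fact that tensoring with $V_2$ (the generator of the fusion ring, up to the even-labelling convention) essentially acts like a "shift" operator on the indices, so the fusion ring is generated by $V_2$ and the lengths of the $V_i$ are strictly increasing along the generating chain. First I would recall from \Cref{rmk-length} that distinct simple objects have distinct lengths, so without loss of generality I may assume $\mathrm{length}(V_i) < \mathrm{length}(V_j)$; in particular $V_i$ and $V_j$ lie at different "heights" in the fusion graph of the generator $V_2$.

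Next I would use the rigidity condition for the NIM-rep together with \Cref{rigidity}: since $V_i \act m_0$ and $V_j \act m_0$ are assumed to be basis elements, we have $(V_i \act m_0, V_i \act m_0) = 1$ and similarly for $j$, and the assumed equality gives $(V_i \act m_0, V_j \act m_0) = 1$. By the rigidity condition this rearranges to $(m_0, V_{i^*} V_j \act m_0) = 1$ (using that the objects are self-dual in this category, $V_{i^*} = V_i$, so this is $(m_0, (V_i V_j) \act m_0) = 1$). Now $V_i V_j$ decomposes in $\mathrm{Gr}((A_1,l)_{\frac{1}{2}})$ as a non-negative integer combination $\sum_k c_{i,j}^k V_k$, and since $V_i \neq V_j$ with distinct lengths, the coefficient $c_{i,j}^{k}$ for $k=0$ (i.e. the multiplicity of the unit $\one = V_0$) is zero — the unit appears in $V_i V_j$ only when $j = i^* = i$. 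So $(V_i V_j) \act m_0 = \sum_{k \geq 1} c_{i,j}^k (V_k \act m_0)$ involves no term $V_0 \act m_0 = m_0$, hence this element has zero inner product with $m_0$, contradicting $(m_0, (V_i V_j)\act m_0) = 1$.

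The main obstacle I anticipate is being careful about the even-index relabelling: the objects of $(A_1,l)_{\frac{1}{2}}$ are the $V_{2i}$, and one must verify that the fusion rules induced on these (which are the fusion rules of $\mathrm{Gr}((A_1,l)_{\frac{1}{2}})$, a genuine fusion ring) still have the property that $\one$ appears in a product $V \cdot W$ of two simples if and only if $W = V^*$, and that $V^* = V$ here (self-duality). Both follow from the based-ring structure — indeed $\tau(b_i b_j) = \delta_{i,j^*}$ is precisely the statement that $\one$ occurs in $b_i b_j$ with multiplicity $1$ exactly when $j = i^*$ — so strictly speaking I do not even need the explicit formula \Cref{eq-quantumfusion}, only that $\mathrm{Gr}((A_1,l)_{\frac{1}{2}})$ is a based ring and that all its basis elements are self-dual. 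That self-duality can be read off \Cref{eq-quantumfusion} (the unit $V_0$ appears in $V_iV_i$) or cited from the construction in \cite{NWZ}. With that in hand the argument is short; the only "calculation" is the inner-product rearrangement via the rigidity condition, which is routine.
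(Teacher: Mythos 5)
Your reduction, via rigidity and self-duality, to the identity $(m_0,(V_iV_j)\act m_0)=1$ is correct, but the final step --- ``$V_iV_j$ contains no copy of $V_0$, hence $(m_0,(V_iV_j)\act m_0)=0$'' --- does not follow. The pairing $(m_0,V_k\act m_0)$ reads off the multiplicity of the basis element $m_0$ in the $M$-basis decomposition of $V_k\act m_0$, and this can be strictly positive for non-trivial $V_k$; you are conflating the expansion of $(V_iV_j)\act m_0$ as a sum of elements $V_k\act m_0$ with its expansion in the distinguished basis $M$. In fact, by the proposition describing $A=\underline{\mathrm{Hom}}(N,N)$ through self-loops at $m_0$, the numbers $(m_0,V_k\act m_0)$ are exactly the multiplicities of the algebra object, so your claim that they vanish for $k\neq 0$ is equivalent to asserting $A\cong\one$, i.e.\ that the NIM-rep is the regular one --- which is essentially the conclusion of \Cref{prop-quantumNIM} and cannot be assumed at this stage. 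A concrete failure of the general principle you invoke: for the group-ring NIM-rep $M(H)$ with $H$ non-trivial, $(b_h\act m_0,m_0)=1$ for every $h\in H$ even though $b_h\neq\one$. So from $(m_0,(V_iV_j)\act m_0)=1$ you may only conclude that some non-trivial constituent $V_k$ of $V_iV_j$ satisfies $(m_0,V_k\act m_0)>0$, which is not yet a contradiction.

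The paper takes a different route that sidesteps this: from $V_i\act m_0=V_j\act m_0$ and commutativity it deduces $V_i^2\act m_0=V_j^2\act m_0$, then compares the number of simple constituents of $V_i^2$ and $V_j^2$ (their lengths, which differ by \Cref{rmk-length}); since the constituents of the shorter square sit inside those of the longer one, cancellation forces some $V_k\act m_0=0$, contradicting \Cref{nim0}. Your opening observation about distinct lengths points in exactly this direction but is never used in your contradiction. If you want to salvage the inner-product framework, you would need an independent argument that $(m_0,V_k\act m_0)=0$ for the relevant $k$, and that is the genuinely missing content.
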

\begin{proof}
    If we assume that $V_i \act m_0 = V_j \act m_0 $, then as the fusion ring is commutative, we have that $V_i^2 \act m_0 = V_j^2 \act m_0$. But by \Cref{rmk-length}, $\mathrm{length}(V_i) \neq \mathrm{length(V_j)}$, and so the only way that $V_i^2 \act m_0 = V_j^2 \act m_0$ is if some $V_k$ in the decomposition of $V_i^2$ or $V_j^2$ (whichever has larger length), satisfies $V_k \act m_0 = 0$. This has been ruled out by \Cref{nim0}.
\end{proof}

  \begin{remark}\label{rmk-dimadmiss}
     If we have objects $V_i,V_j \in \mathrm{Gr}((A_1,l)_{\mathrm{pt}})$ such that $\mathrm{length}(V_i) > \mathrm{length}(V_j)$, then it is easily verified using the fusion rules in \Cref{eq-quantumfusion} that
     $(V_i^2 \act m_p,m_p) > (V_j^2 \act m_p,m_p)$. Thus, if $V_i \act m_p \in M$, then we immediately have that $V_j \act m_p \in M$. In the case of an admissible NIM-rep $(A,M)$ over $\mathrm{Gr}((A_1,l)_{\mathrm{pt}})$, if the basis $M$ has cardinality d, we immediately get that the objects $V_j$ that satisfy $V_j \act m_0 \in M$ are exactly those of $\mathrm{length}(V_j)\leq d$.
  \end{remark}

\begin{proposition}\label{prop-pqlessthan1}
    In any NIM-rep $(A,M)$ over the fusion ring $(A_1,l)_{\frac{1}{2}}$, $(V_{l-1} \act m_p,m_q) \leq 1$ for all $m_p, m_q \in M$.
\end{proposition}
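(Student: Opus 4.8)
The plan is to bound the largest eigenvalue of the matrix by which $V_{l-1}$ acts and then read off the entrywise bound. Write $N$ for the matrix with entries $N_{pq}=(V_{l-1}\act m_p,m_q)$. Every simple object of $(A_1,l)$ is self-dual, so $V_{l-1}^{*}=V_{l-1}$, and the rigidity condition gives $N_{pq}=(V_{l-1}\act m_p,m_q)=(m_p,V_{l-1}\act m_q)=N_{qp}$; hence $N$ is symmetric with entries in $\mZ_{\geq 0}$. First I would reduce the statement to the inequality $\lambda_{\max}(N)<2$, where $\lambda_{\max}$ denotes the largest eigenvalue. Indeed, for $p\neq q$ the $2\times 2$ principal submatrix of $N$ on the indices $p,q$ has largest eigenvalue $\tfrac12\big(N_{pp}+N_{qq}+\sqrt{(N_{pp}-N_{qq})^{2}+4N_{pq}^{2}}\,\big)\geq N_{pq}$ (using $N_{pp},N_{qq}\geq 0$), and by Cauchy interlacing this is at most $\lambda_{\max}(N)$; the case $p=q$ is immediate. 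So $N_{pq}\leq\lambda_{\max}(N)$, and once $\lambda_{\max}(N)<2$ the non-negative integer $N_{pq}$ must be $0$ or $1$.

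To bound $\lambda_{\max}(N)$, note that $\mathrm{Gr}((A_1,l)_{\frac12})$ is commutative, so the family $\{N_{V_i}\}$ consists of pairwise commuting real symmetric matrices (symmetry again from self-duality of the $V_i$), hence is simultaneously orthogonally diagonalizable. On any common eigenvector the assignment $V_i\mapsto(\text{eigenvalue of }N_{V_i})$ is real-valued and, since $V_i\mapsto N_{V_i}$ respects the fusion rules, extends to a $\mC$-algebra homomorphism $\chi\colon\mathrm{Gr}((A_1,l)_{\frac12})\otimes_{\mZ}\mC\to\mC$. Thus every eigenvalue of $N$ equals $\chi(V_{l-1})$ for some such character $\chi$. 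Now $V_{l-1}^{2}=\one\oplus V_2$ by the fusion rules \eqref{eq-quantumfusion}, so $\chi(V_{l-1})^{2}=1+\chi(V_2)$. The scalar $\chi(V_2)$ is a root of the characteristic polynomial of the matrix of $V_2$ in the regular representation of the fusion ring, i.e.\ one of its eigenvalues; that matrix is symmetric with non-negative integer entries and is irreducible, because $V_2$ generates the ring ($V_2\otimes V_{2i}$ contains $V_{2i+2}$ with multiplicity one for $i<m$), and its row sums are $1,3,3,\dots,3,2$ — in particular not all equal, with maximum $3$. By the strict Perron--Frobenius bound for irreducible non-negative matrices its spectral radius, hence every one of its eigenvalues, is strictly less than $3$. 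Therefore $\chi(V_{l-1})^{2}=1+\chi(V_2)<4$, so $|\chi(V_{l-1})|<2$, giving $\lambda_{\max}(N)<2$ and finishing the argument.

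The genuinely substantive step is the last one, namely pinning down that the regular-representation eigenvalues of $V_{l-1}$ have modulus below $2$; the reduction to a spectral bound and the passage from that bound to individual entries of $N$ are elementary linear algebra. An alternative for the delicate step is to quote the Verlinde formula: the characters of $\mathrm{Gr}((A_1,l)_{\frac12})$ give $\chi(V_{l-1})=\pm 2\cos\!\big(\tfrac{(k+1)\pi}{l+2}\big)$ for the relevant $k$, with $0<\tfrac{k+1}{l+2}<1$, so all such values lie in $(-2,2)$; I would keep the self-contained version above as the main proof.
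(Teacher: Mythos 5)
Your proposal is correct, and it takes a genuinely different route from the paper's. The paper argues by contradiction with a purely combinatorial count inside the NIM-rep: assuming $(V_{l-1}\act m_p,m_q)\ge 2$ forces $(V_2\act m_p,m_p)\ge 3$, and summing a telescoping chain of inequalities on the quantities $h_{i,p}=\sum_k (V_i\act m_p,m_k)$ over $1\le j\le\tfrac{l-1}{2}$ produces $\tfrac{3l-5}{l+1}\ge h_{2,p}\ge 3$, which is impossible. You instead bound the spectrum: the commuting symmetric non-negative matrices $N_{V_i}$ yield real characters $\chi$ of the complexified fusion ring, the relation $V_{l-1}^2=\one+V_2$ gives $\chi(V_{l-1})^2=1+\chi(V_2)$, and Perron--Frobenius applied to the regular representation of $V_2$ (equivalently, $|\chi(V_2)|\le\FPdim(V_2)<3$) forces every eigenvalue of $N_{V_{l-1}}$ into $(-2,2)$, whence the entrywise bound by interlacing (or simply $N_{pq}=e_p^{T}Ne_q\le\lVert N\rVert_{\mathrm{op}}=\max_i|\lambda_i|<2$). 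Every step checks out: self-duality of the $V_i$ plus rigidity gives symmetry of each $N_{V_i}$, the maximal-ideal argument shows $\chi(V_2)$ is indeed an eigenvalue of the regular-representation matrix, and that matrix is irreducible with non-constant row sums, so its spectral radius is strictly below the maximal row sum (the only cosmetic slip is that for $l=3$ the row sums are $1,2$ rather than $1,3,\dots,3,2$, which only helps). What your approach buys: it is shorter and more structural, it proves the stronger statement that the whole spectrum of $N_{V_{l-1}}$ lies in $(-2,2)$, and it delivers the subsequent Lemma~\ref{lem-V(k-1)} for free, since $(V_{l-1}^2\act m_p,m_p)=(N^2)_{pp}\le\lambda_{\max}(N^2)<4$. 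What the paper's argument buys is that it stays entirely within elementary non-negative-integer bookkeeping on the NIM-graph, without invoking characters or Perron--Frobenius theory.
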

\begin{proof}
If we assume that $(V_{l-1}\act m_p,m_q)= a_{l-1,p}^q\geq 2$, we can use the fusion rules in \Cref{eq-quantumfusion} to obtain
\begin{equation*}
    m_p + V_2 \act m_p = V_{l-1}^2 \act m_p = a_{l-1,p}^q V_{l-1} \act m_q + \sum_{\substack{k \in L\\ k \neq q}} a_{l-1,p}^k V_{l-1} \act m_k
\end{equation*}
Applying the form $(-,m_p)$, and using the rigidity condition of the NIM-rep, we find that
\begin{equation}\label{eq-V2greater3}
    (V_2 \act m_p,m_p) \geq a_{l-1,p}^q(V_{l-1}\act m_q,m_p) -1 \geq 3
\end{equation}

The fusion rules in \Cref{eq-quantumfusion} give that, when $l \geq 3$,
$  V_{2j} V_2 = V_{2j-2} + V_{2j}+V_{2j+2},$ when $ 1\leq j \leq  \frac{l-3}{2},$ and $
    V_2 V_{l-1} = V_{l-3} + V_{l-1}. $
We let $h_{i,p} := \sum_{k\in L}a^k_{i,p}$, which counts the number of NIM-basis elements in the decomposition of $V_i \act m_p$.  Applying the fusion rules to $V_2jV_2 \act m_p= \sum_{k \in L}a_{2,p}^kV_2j \act m_k$, we obtain
\begin{align*}
    V_{2j-2}\act m_p + V_{2j+2} \act m_p &= (a_{2,p}^p-1)V_{2j} \act m_p + \sum_{\substack{k \in L\\ k \neq p}} a_{2,p}^k V_{2j} \act m_k , \quad 1 \leq j \leq \frac{l-3}{2} \\
    V_{l-3} \act m_p &= (a_{2,p}^p-1)V_{l-1}\act m_p + \sum_{\substack{k \in L \\ k \neq p}} a_{2,p}^k V_{l-1} \act m_k
\end{align*}
By noting that $h_{i,p}>1$ for all choices of $i,p$, and $a_{2,p}^p\geq 3$ by \Cref{eq-V2greater3}, when we count the NIM-basis elements on each side we obtain the following inequalities;
\begin{align*}
    &h_{2j-2,p}+h_{2j+2,p}\geq 2 h_{2j,p}+ h_{2,p} -3, \quad 1\leq j\leq \frac{l-3}{2},\\ &h_{l-3,p} \geq h_{l-1,p} + h_{2,p} -3
\end{align*}

By taking the inequality for each $1\leq j \leq \frac{l-1}{2}$ and summing them together, we obtain
\begin{align*}
    1 + h_{2,p} + 2h_{4,p} + ... + 2h_{l-3,p} + h_{l-1,p} &\geq 2h_{2,p} + 2h_{4,p} + ...+ 2h_{l-1,p} + \frac{l-1}{2}(h_{2,p}-3) \\
    \implies \frac{3l-1}{2} &\geq \frac{l+1}{2}h_{2,p} +h_{l-1,p} \geq \frac{l+1}{2}h_{2,p} + 2\\
    \implies \frac{3l-5}{l+1}&\geq h_{2,p}
\end{align*}
where the last inequality in the second line follows due to our initial assumption.
However, the last inequality cannot be satisfied as the fraction on the left-hand side is strictly less than 3 for all values of l, which contradicts $(V_2 \act m_p,m_p)= a_{2,p}^p \geq 3$. Hence we have a contradiction, and so, $(V_{l-1}\ast m_p,m_q) \leq 1$, for all $m_p,m_q \in M$.
\end{proof}
\begin{lemma}\label{lem-V(k-1)}
    In any NIM-rep $(A,M)$ over $(A_1,l)_{\frac{1}{2}}$, $(V^2_{l-1}\act m_p,m_p) \leq 3$ for all $m_p,m_q \in M$.
\end{lemma}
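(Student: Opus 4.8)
The plan is to peel off a "$+1$" coming from the identity summand and reduce to the diagonal bound $(V_2\act m_p,m_p)\le 2$, which is already available. First I would compute the fusion product $V_{l-1}\otimes V_{l-1}$ using \eqref{eq-quantumfusion}: for $i=j=l-1$ the condition $c^{k}_{l-1,l-1}=1$ forces $0=|i-j|\le k\le \min\bigl(i+j,\,2l-i-j\bigr)=\min(2l-2,\,2)=2$ together with $k\equiv i+j=2l-2\equiv 0\pmod 2$, so $k\in\{0,2\}$. Hence $V_{l-1}^{2}=V_{0}+V_{2}$ in $\mathrm{Gr}\bigl((A_1,l)_{\frac{1}{2}}\bigr)$ (both summands indeed lie in the half-subcategory since $l$ is odd), and this computation accounts for the precise numerical margin in the statement.

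Consequently, for any $m_p\in M$, using $V_0=\one$ and $(m_p,m_p)=1$,
\[
(V_{l-1}^{2}\act m_p,m_p)=(V_0\act m_p,m_p)+(V_2\act m_p,m_p)=1+(V_2\act m_p,m_p),
\]
so the assertion is equivalent to $(V_2\act m_p,m_p)\le 2$. (Equivalently, by the rigidity condition and self-duality of $V_{l-1}$ one has $(V_{l-1}^{2}\act m_p,m_p)=(V_{l-1}\act m_p,V_{l-1}\act m_p)=\sum_k (a^{k}_{l-1,p})^{2}$, and since $a^{k}_{l-1,p}\le 1$ for every $k$ by \Cref{prop-pqlessthan1}, this equals $h_{l-1,p}$; the claim then reads $h_{l-1,p}\le 3$, consistent with the displayed identity. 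I would include this remark only if it streamlines exposition.)

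To conclude, I would invoke the estimate obtained inside the proof of \Cref{prop-pqlessthan1}: there, the hypothesis $a^{p}_{2,p}:=(V_2\act m_p,m_p)\ge 3$ is shown, via the telescoping sum of inequalities coming from $V_{2j}V_2=V_{2j-2}+V_{2j}+V_{2j+2}$ and $V_2V_{l-1}=V_{l-3}+V_{l-1}$ together with $h_{i,p}>1$, to force $h_{2,p}\le \tfrac{3l-5}{l+1}<3$, contradicting $h_{2,p}\ge a^{p}_{2,p}\ge 3$. Hence $(V_2\act m_p,m_p)\le 2$ for all $m_p\in M$, and therefore $(V_{l-1}^{2}\act m_p,m_p)\le 3$. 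If a self-contained proof is preferred, one simply re-runs that same chain of inequalities here; no new input beyond \eqref{eq-quantumfusion}, the NIM-rep axioms, and \Cref{prop-pqlessthan1} is needed. The main obstacle is thus purely bookkeeping: getting the fusion computation $V_{l-1}^{2}=V_0+V_2$ exactly right so the extra "$1$" is correctly tracked, and recognising that the diagonal bound $(V_2\act m_p,m_p)\le 2$ is already a by-product of the previous proposition, so that the substantive counting argument does not have to be redone.
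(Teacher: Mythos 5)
Your proof is correct and follows essentially the same route as the paper: both compute $V_{l-1}^2 = V_0 + V_2$ from the fusion rules to reduce the claim to a bound on $(V_2\act m_p,m_p)$, and then rerun the telescoping chain of inequalities from the proof of \Cref{prop-pqlessthan1} to rule out $(V_2\act m_p,m_p)\geq 3$. The only cosmetic difference is that you establish the equivalent bound $(V_2\act m_p,m_p)\leq 2$ directly, whereas the paper argues by contradiction from $(V_{l-1}^2\act m_p,m_p)\geq 4$ and arrives at the marginally different bound $h_{2,p}\leq \frac{3l-9}{l+1}$ in place of your $\frac{3l-5}{l+1}$; both are strictly less than $3$, so both close the argument.
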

\begin{proof}
    If we assume that $ (V^2_{l-1}\act m_p,m_p) > 4$, then by \Cref{prop-pqlessthan1}, we have that $h_{l-1,p}= (V^2_{l-1}\act m_p,m_p)>4$. By the fusion rules in \Cref{eq-quantumfusion}, we have that $(V_2 \act m_p,m_p)\geq 3$. We are in a very similar setup to the proof of \Cref{prop-pqlessthan1}, which if we follow through results in the inequality, we get
    \begin{equation*}
        \frac{3l-9}{l+1}\geq h_{2,p}.
    \end{equation*}
    This fraction is also strictly less than 3, so we obtain the desired contradiction.
\end{proof}
\begin{proposition}\label{prop-leq2}
    Let $(A,M)$ be an admissible NIM-rep over $(A_1,l)_{\frac{1}{2}}$. Then there exists no $m_k\in M$ such that $(V^2_{l-1} \act m_k,m_k) = 3$.
\end{proposition}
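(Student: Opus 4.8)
The plan is to argue by contradiction: suppose some $m_k \in M$ satisfies $(V_{l-1}^2 \act m_k, m_k) = 3$. First I would put the hypothesis in a more usable form. Since $V_{l-1}^2 = V_0 + V_2$ in $(A_1,l)_{\frac12}$ (this is the identity $m_p + V_2 \act m_p = V_{l-1}^2 \act m_p$ already used in the proof of \Cref{prop-pqlessthan1}) and $V_{l-1}$ is self-dual, we get $(V_{l-1}^2 \act m_k, m_k) = 1 + (V_2 \act m_k, m_k)$, so the assumption is equivalent to $(V_2 \act m_k, m_k) = 2$; moreover, by \Cref{prop-pqlessthan1} the vector $V_{l-1}\act m_k$ is multiplicity-free and supported on exactly three basis elements, which fixes the local shape of the NIM-graph around $m_k$ as a ``fork''. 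It is worth disposing of the small cases first: for $l=3$ one has $V_2 = V_{l-1}$, whose Frobenius--Perron dimension is the golden ratio, hence $<2$, so the Perron eigenvector equation for the symmetric matrix $N_{V_2}$ already forbids any diagonal entry equal to $2$ in any NIM-rep over $(A_1,3)_{\frac12}$; thus we may assume $l \ge 5$.

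The essential new ingredient is admissibility, because the telescoping argument of \Cref{prop-pqlessthan1} and \Cref{lem-V(k-1)} does not close when the relevant diagonal coefficient is exactly $2$: with coefficient $2$, after summing the inequalities coming from $V_{2j}V_2 = V_{2j-2}+V_{2j}+V_{2j+2}$ the ``middle'' quantities $h_{2j,\cdot}$ fail to cancel, so one needs an \emph{upper} bound on them. By \Cref{prop-NIMadmis} there is a distinguished $m_0 \in M$ with $m_k = V_j \act m_0$ for some simple $V_j$, and using commutativity and self-duality of $\mathrm{Gr}((A_1,l)_{\frac12})$ one rewrites $(V_2 \act m_k, m_k) = (V_j^2 V_2 \act m_0, m_0) = \sum_r c_{j,j}^r (V_r V_2 \act m_0, m_0)$; since $V_0$ occurs in $V_j^2$, the term $(V_2 \act m_0, m_0)$ is present, and combining this with \Cref{lem-V(k-1)} (which bounds every such diagonal entry by $2$) and non-negativity should force $(V_2 \act m_0, m_0) = 2$ as well, so that we may assume $m_k = m_0$ from the start. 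At the generator $m_0$ the admissibility input of \Cref{rmk-dimadmiss} (only the simples of length at most $\lvert M\rvert$ carry $m_0$ to a single basis element, with those simples identified uniquely via \Cref{rmk-length} and the injectivity statement \Cref{lem-noninvert}) is what supplies the missing upper bounds on the $h_{2j,0}$ and on the support of $V_{l-1}\act m_0$.

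With those bounds in hand I would re-run the telescoping exactly as in the proofs of \Cref{prop-pqlessthan1} and \Cref{lem-V(k-1)}: apply $V_{2j}V_2 = V_{2j-2}+V_{2j}+V_{2j+2}$ for $1 \le j \le \frac{l-3}{2}$ and $V_2 V_{l-1} = V_{l-3}+V_{l-1}$ to $V_{2j}\act m_0$, count NIM-basis elements with multiplicity on both sides, insert the new upper bounds, and sum, after first noting that $V_2 \act m_0 = 2m_0$ is impossible (it would force a negative coefficient of $m_0$ in $V_6 \act m_0$ via $V_6 = V_2V_4 - V_2 - V_4$ when $l\ge 7$, and a direct contradiction from $V_4^2 = V_0 + V_2$ when $l=5$), so that $h_{2,0}\ge 3$. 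The outcome should be a numerical inequality of the same flavour as $h_{2,p}\le \tfrac{3l-5}{l+1}<3$ appearing above, contradicting $(V_2 \act m_0, m_0) = 2$. The main obstacle I anticipate is precisely the middle step --- transferring the extremal datum from an arbitrary $m_k$ to the generator $m_0$, and then extracting the correct upper bounds on the middle multiplicities from \Cref{rmk-dimadmiss} --- since this is the only place where admissibility genuinely enters and the bookkeeping with the fusion coefficients $c_{j,j}^r$ and the length function of \Cref{def-length} is delicate; the final telescoped estimate, by contrast, is a routine variant of the calculations already carried out.
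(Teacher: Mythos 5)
There is a genuine gap at the ``transfer'' step, and it is fatal to the proposed strategy. You claim that from $2=(V_2\act m_k,m_k)=\sum_r c_{j,j}^r\,(V_rV_2\act m_0,m_0)$ one should be able to force $(V_2\act m_0,m_0)=2$ and thereby assume $m_k=m_0$. But this identity only yields the \emph{upper} bound $(V_2\act m_0,m_0)\le 2$, since the remaining terms are merely non-negative; nothing forces them to vanish, and in fact they do not: the coefficient $c_{j,j}^2$ is $1$ for the relevant $j$, and $(V_2\act m_0,V_2\act m_0)\ge 1$. Worse, the conclusion you are trying to reach is simply false for an admissible NIM-rep: once $(V_{l-1}\act m_k,m_k')$ is multiplicity-free with three-element support, $\lvert M\rvert\ge 3$, so by \Cref{rmk-dimadmiss} the element $V_2\act m_0$ lies in $M$, and by \Cref{lem-noninvert} it differs from $V_0\act m_0=m_0$; hence $(V_2\act m_0,m_0)=0$, never $2$. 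So the extremal vertex $m_k$, if it exists, is necessarily \emph{not} the generator $m_0$, and the reduction ``we may assume $m_k=m_0$'' cannot be carried out. Everything downstream (the upper bounds on the $h_{2j,0}$ and the re-run of the telescoping at $m_0$) therefore has no starting point.

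For comparison, the paper's proof keeps $m_k=V_j\act m_0$ with $j$ in general nonzero and avoids telescoping entirely: it computes $V_2\act m_k$ in two ways, once via $V_{l-1}^2V_j=V_j+2V_{j+2}+V_{j+4}$ and once via $V_2V_j=V_{j-2}+V_j+V_{j+2}$, then pairs both expressions against $m_k$ to get the two linear relations $2=2(V_{j+2}\act m_0,m_k)+(V_{j+4}\act m_0,m_k)$ and $1=(V_{j-2}\act m_0,m_k)+(V_{j+2}\act m_0,m_k)$. Admissibility (via \Cref{rmk-dimadmiss}) pins down which of these multiplicities can be nonzero, forcing $(V_{j+2}\act m_0,m_k)=1$ and $(V_{j+4}\act m_0,m_k)=0$, which is then incompatible with $(V_2\act m_0,m_k)\neq 0$ guaranteed by \Cref{lem-noninvert}. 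Your opening reductions (that the hypothesis is equivalent to $(V_2\act m_k,m_k)=2$, and the multiplicity-free fork shape from \Cref{prop-pqlessthan1}) are correct and agree with the paper, and your separate treatment of $l=3$ is sound but unnecessary; the middle of the argument, however, needs to be replaced along the lines above rather than by a second telescoping.
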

\begin{proof}
    Assume there exists some $m_k \in M$ such that $(V^2_{k-1} \act m_k,m_k) = 3$. By \Cref{lem-V(k-1)}, we can write 
    \begin{equation*}
        V_{k-1}\act m_k = m_x + m_y + m_z,
    \end{equation*}
    where $m_x, m_y, m_z \in M$ are distinct NIM-basis elements. The fusion rule of $V_{l-1}^2$ gives us that $(V_2 \act m_k,m_k)=2$.

    As the NIM-rep is admissible and $m_x,m_y,m_y$ are distinct NIM-basis elements, the cardinality of the NIM-basis $M$ is at least $3$, so by \Cref{rmk-dimadmiss} we know that $V_2 \act m_0, V_{l-1} \act m_0 \in M$. We also know that there exists a $V_j \in  (A_1,l)_{\frac{1}{2}}$ such that $V_j \act m_0 = m_k$. Using the fusion rules, we find that 
    \begin{equation*}
        V_{l-1} \act m_k = V_{l-1}V_j \act m_0 = V_{l-1-j} \act m_0 + V_{l+1-j} \act m_0 .
    \end{equation*}
    Acting with $V_{l-1}$ again on both sides and then using the form $(-,m_k)$, we have
    \begin{align}
        m_k + V_2 \act m_k &= V^2_{k-1} \act m_0 = V_j \act m_0 + 2 V_{j+2} \act m_0 + V_{j+4} \act m_0 \nonumber \\
       \implies  V_2 \act m_k &=  2V_{j+2} \act m_0 + V_{j+4} \act m_0 \label{eq-quantum1}.
    \end{align}
    A second way to calculate $V_2 \act m_k$ is as follows:
    \begin{equation} \label{eq-quantum2}
        V_2 \act m_k = V_2V_j \act m_0 = V_{j-2} \act m_0 + V_j \act m_0 + V_{j+2} \act m_0.
    \end{equation}
    By applying the form $(-,m_k)$ to both \Cref{eq-quantum1} and \Cref{eq-quantum2}, we have that
    \begin{align}
        &2 = 2(V_{j+2} \act m_0,m_k) + (V_{j+4} \act m_0,m_k) \label{eq-quantum3}\\
        &1 = (V_{j-2} \act m_0,m_k) + (V_{j+2} \act m_0,m_k) 
    \end{align}
    By the fusion rules \Cref{eq-quantumfusion} and \Cref{rmk-dimadmiss}, it is clear that we can only satisfy \Cref{eq-quantum3} when $j \leq \frac{l\pm 1}{2}$, where the sign is determined by $l \equiv \mp 1 \text{ mod } 4$. But then we immediately have that $(V_{j-2} \act m_k, m_1) = 0$ by \Cref{rmk-dimadmiss}, so the above equations give us that 
    \begin{equation} \label{eq-quantum5}
        (V_{j+2} \act m_0,m_k) = 1, \; \text{ and } (V_{j+4} \act m_0, m_k)=0.
    \end{equation}
    If we calculate the fusion rules, we see that the only term that appears in $(V_{j+2}V_j \act m_0, m_0)$ and not in $(V_{j+4})$ is $(V_2 \act m_0,m_k)$. But as $V_2 \act m_0 \in M$, we have by \Cref{lem-noninvert} that this term is non-zero. Hence we can never satisfy \Cref{eq-quantum5}, giving a contradiction. Thus we must have $(V_{l-1}^2 \act m_0,m_0) \leq 2$ for all $m_k \in M$.
\end{proof}
\begin{proposition}\label{prop-quantumNIM}
    Up to isomorphism, there is only one admissible NIM-rep over $A(1,l)_{\frac{1}{2}}$.
\end{proposition}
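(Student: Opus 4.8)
The plan is to reconstruct the entire $\mZ_+$-module structure of an admissible NIM-rep $(A,M)$ from the action of the single element $V_{l-1}$. First I would note that $V_2$ generates the fusion ring $(A_1,l)_{\frac{1}{2}}$ (every $V_{2k}$ is a polynomial in $V_2$, as one reads off from the recursion implicit in \Cref{eq-quantumfusion}) and that $V_2 = V_{l-1}^2 - \one$; hence $V_{l-1}$ already generates the ring. Consequently a NIM-rep over $(A_1,l)_{\frac{1}{2}}$ is irreducible exactly when the matrix $N := N_{V_{l-1}}$ of the $V_{l-1}$-action is irreducible --- i.e.\ when the graph $\Gamma$ on the vertex set $M$, with an edge connecting $m_p$ and $m_q$ whenever $(V_{l-1}\act m_p,m_q)=1$ and a loop at $m_p$ whenever $(V_{l-1}\act m_p,m_p)=1$, is connected. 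Since an admissible NIM-rep comes, via \Cref{thm-EGNOmodule}, from an indecomposable module category, its NIM-graph (see \Cref{rmkNIMreps}) is connected, and therefore so is $\Gamma$.

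Next I would pin down $\Gamma$ combinatorially. By \Cref{prop-pqlessthan1} every entry of $N$ is at most $1$, so $\Gamma$ is a simple graph possibly carrying loops; and combining \Cref{lem-V(k-1)} with \Cref{prop-leq2} gives $(V_{l-1}^2\act m_p,m_p)\le 2$ for all $m_p\in M$, which by the rigidity condition equals the number of non-zero entries in row $p$ of $N$. Hence every vertex of $\Gamma$ meets at most two edge-ends, so the connected graph $\Gamma$ is a path, a cycle, or a tadpole (a path with one extra loop at an end). To single out which, I would use that for an indecomposable NIM-rep the Perron--Frobenius eigenvalue of the nonnegative matrix $N$ equals $\FPdim(V_{l-1}) = 2\cos(\pi/(l+2)) < 2$: this rules out cycles (adjacency spectral radius $2$) and every "too short" path or tadpole, while a path realising this spectral radius would have $l+1$ vertices, contradicting $|M|\le (l+1)/2$ --- the latter bound because, by admissibility (\Cref{prop-NIMadmis}) and \Cref{lem-noninvert}, $V_j\mapsto V_j\act m_0$ is a bijection from a subset of the $(l+1)/2$ simples onto $M$. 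The only surviving case is the tadpole $T_{(l+1)/2}$, whose spectral radius $2\cos(\pi/(2\cdot\frac{l+1}{2}+1)) = 2\cos(\pi/(l+2))$ matches precisely because $l$ is odd. (In the hands-on spirit of the preceding lemmas one could argue instead: \Cref{lem-noninvert} and \Cref{rmk-dimadmiss} force $V_{l-1}\act m_0$ to be a single basis element $m_1\neq m_0$, then $V_2\act m_0 = V_{l-1}^2\act m_0 - m_0$ a single basis element $m_2$, and inductively $V_{l-1}\act m_i = m_{i-1}+m_{i+1}$ as one walks along $\Gamma$ away from $m_0$; the truncation $V_{l-1}^2=\one+V_2$ --- crucially with no $V_{2k}$, $2k\ge 4$, on the right --- then forces this path to close off, with a loop, after exactly $(l+1)/2$ steps.)

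Finally, the regular NIM-rep $\mathrm{Gr}((A_1,l)_{\frac{1}{2}})$ is itself admissible (it is $\cC$ viewed as a module category over itself, generated by $[\one]$), so by the previous paragraph its $V_{l-1}$-graph is again $T_{(l+1)/2}$; and since $V_{l-1}$ generates the ring, the whole $\mZ_+$-module structure is determined by $N_{V_{l-1}}$. Hence $(A,M)$ is isomorphic to the regular NIM-rep, which yields the uniqueness claim. The \emph{main obstacle} I anticipate is the middle step --- excluding the non-tadpole graphs and, above all, fixing the \emph{size} of the tadpole --- since this is the one place where book-keeping with the bilinear form no longer suffices and one must inject either the Perron-eigenvalue/$\FPdim$ input or a careful use of the truncation relations of $(A_1,l)_{\frac{1}{2}}$; the degenerate case $l=3$ (where $V_{l-1}=V_2$ is the unique non-trivial simple and several steps collapse) should also be checked on its own.
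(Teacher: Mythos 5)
Your proof is correct, but it reaches the conclusion by a genuinely different route than the paper. The paper works throughout with the distinguished generator $m_0$ supplied by admissibility: it takes the simple $V_j$ of maximal length with $V_j\act m_0\in M$, uses \Cref{prop-leq2} together with the length bookkeeping of \Cref{rmk-dimadmiss} to force $j=\tfrac{l\pm1}{2}$, and concludes that every basis element is $V_i\act m_0$ for a unique simple $V_i$, so the whole module structure is dictated by the fusion rules. You instead classify the graph of the single matrix $N=N_{V_{l-1}}$: \Cref{prop-pqlessthan1}, \Cref{lem-V(k-1)} and \Cref{prop-leq2} make $N$ a symmetric $0$--$1$ matrix with row sums at most $2$, so its connected graph is a path, a cycle, a tadpole, or --- a case missing from your list, though your spectral argument dispatches it for free since all of its row sums equal $2$ --- a path with loops at \emph{both} ends; the Perron--Frobenius eigenvalue $\FPdim(V_{l-1})=2\cos(\pi/(l+2))<2$ eliminates cycles and doubly-looped paths, the cardinality bound $|M|\le\tfrac{l+1}{2}$ (the one place you use admissibility and \Cref{lem-noninvert}) eliminates the path $A_{l+1}$, and only the tadpole $T_{(l+1)/2}$ survives; since $V_{l-1}$ generates the fusion ring (via $V_2=V_{l-1}^2-\one$), this pins down every $N_{V_{2k}}$ and hence the NIM-rep up to isomorphism. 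What your route buys is the conceptual picture --- this is the odd-level half of the $ADE$/Coxeter-number classification of \cite{EK}, with the tadpole appearing as the folded $A_{l+1}$ diagram --- at the cost of importing two standard facts the paper never states, namely that the positive common eigenvector of an irreducible NIM-rep has $N_b$-eigenvalue $\FPdim(b)$, and the spectral radii of paths, cycles and tadpoles; the paper's argument is more pedestrian but entirely self-contained. Your parenthetical ``hands-on'' variant is essentially the paper's proof recast as a walk along the $V_{l-1}$-graph, and both you and the paper read \Cref{lem-V(k-1)} and \Cref{prop-leq2} jointly as giving $(V_{l-1}^2\act m_p,m_p)\le 2$, so your argument inherits exactly the same dependence on those lemmas as the original.
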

\begin{proof}
Suppose we have an admissible NIM-rep $(A,M)$ over $A(1,l)_{\frac{1}{2}}$ where $M$ has cardinality $d$. Then there exists a $V_j \in A(1,l)_{\frac{1}{2}}$ where $\mathrm{length(V_j) = d}$ and $V_j \act m_0 \in M$. Acting with $V_{l-1}$ we get that 
\begin{equation*}
    V_{l-1}V_j \act m_0 = V_{l-1-j}\act m_0 + V_{l+1-j} \act m_0.
\end{equation*}
Using the fusion rules \Cref{eq-quantumfusion} and \Cref{def-length}, if $j < \frac{k\pm 1}{2}$, (where the sign depends on $k \equiv \mp 1 \text{ mod } 4$), then length$(V_{l+1-j})= d+1$ and so $V_{l-1-j} \act m_0 \notin M$, by \Cref{rmk-dimadmiss}. Similarly, if $j > \frac{k\pm 1}{2}$, then $V_{l+1-j} \act m_0 \notin M$. In both cases, this leads to
\begin{equation*}
    (V^2_{l-1} V_{j} \act m_0, V_j \act m_0) \geq (V_{l-1-j} \act m_0,V_{l-1-j} \act m_0 ) + (V_{l+1-j} \act m_0,V_{l+1-j} \act m_0 ) = 3.
    \end{equation*}
    This  contradicts $V_{j} \act m_0 \in M$ by \Cref{prop-leq2}, and so we must have that $j = \frac{k\pm 1}{2}$. All other objects $V_i$ have length less than $V_j$, and so every $m_k \in M$ is of the form $V_i \act m_0 \in M$. This gives that full NIM-rep structure is simply generated by the fusion rules.
\end{proof}

Now that we have all admissible NIM-reps over $\mathrm{Gr}((A_1,l)_{\frac{1}{2}})$, we turn our attention back to $(A_1,l)$. We can form a second full subcategory $(A_1,l)_{\mathrm{pt}}$ by taking the simple objects  $\mathrm{Irr((A_1,l)_{pt})}=\{V_0,V_l\}$. 
\begin{lemma}[\cite{NWZ} Section 4]
    There is an equivalence of modular tensor categories $(A_1,l) \simeq (A_1,l)_{\frac{1}{2}} \boxtimes (A_1,l)_{\mathrm{pt}}$
\end{lemma}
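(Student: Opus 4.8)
The plan is to realise $(A_1,l)_{\mathrm{pt}}$ as a \emph{modular} (i.e.\ non-degenerate) fusion subcategory of $(A_1,l)$ and then invoke M\"uger's decomposition theorem (see e.g.\ \cite{EGNO}): if $\cD$ is a fusion subcategory of a modular category $\cC$ and $\cD$ is itself non-degenerate, then the functor $\cD\boxtimes\cD'\to\cC$, $(X,Y)\mapsto X\otimes Y$, is an equivalence of ribbon categories, where $\cD'$ is the subcategory of objects centralising every object of $\cD$; moreover $\cD'$ is again modular. So it suffices to (i) check that $(A_1,l)_{\mathrm{pt}}$ is a non-degenerate fusion subcategory, and (ii) identify its centraliser with $(A_1,l)_{\frac{1}{2}}$.

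For (i), I would first observe that \Cref{eq-quantumfusion} gives $V_l\otimes V_l\cong V_0$, so $V_l$ is invertible of order two and $(A_1,l)_{\mathrm{pt}}$ is pointed with underlying group $\mZ_2$; such a category is non-degenerate exactly when $\theta_{V_l}$ is a primitive fourth root of unity. Using the explicit ribbon twist of $(A_1,l)$, $\theta_{V_j}=e^{2\pi i h_j}$ with $h_j=j(j+2)/(4(l+2))$ (the conformal weights of $\widehat{\mathfrak{sl}}_2$ at level $l$, equivalently the eigenvalues of the $U_q(\mathfrak{sl}_2)$ $R$-matrix), one computes $\theta_{V_l}=e^{\pi i l/2}$, which equals $i$ or $-i$ because $l$ is odd. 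Hence $(A_1,l)_{\mathrm{pt}}$ is non-degenerate, and this is the one place where oddness of $l$ is used.

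For (ii), since $V_l$ is invertible the object $V_l\otimes V_j\cong V_{l-j}$ is simple, so the monodromy $c_{V_l,V_j}c_{V_j,V_l}$ is the scalar $\theta_{V_{l-j}}\,\theta_{V_j}^{-1}\,\theta_{V_l}^{-1}=e^{2\pi i(h_{l-j}-h_j-h_l)}$. A short computation with $h_m=m(m+2)/(4(l+2))$ gives $h_{l-j}-h_j-h_l=-j/2$, so this scalar is $(-1)^j$. Thus $V_j$ centralises $(A_1,l)_{\mathrm{pt}}$ if and only if $j$ is even, so the centraliser is the full subcategory on $\{V_{2i}\}_{0\le i\le (l-1)/2}$, which is closed under tensor product (the rules in \Cref{eq-quantumfusion} preserve the parity of labels) and contains $V_0$ --- that is, it is exactly $(A_1,l)_{\frac{1}{2}}$. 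M\"uger's theorem applied with $\cD=(A_1,l)_{\mathrm{pt}}$ then delivers the equivalence of modular tensor categories, the ribbon structure on the Deligne product matching because the twist is multiplicative across the factors.

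The hard part is not conceptual but a matter of bookkeeping: the whole argument rests on having the balancing/ribbon structure of $(A_1,l)$ explicitly enough to evaluate $\theta_{V_l}$ and the monodromy scalars. Once the weights $h_j=j(j+2)/(4(l+2))$ (or the corresponding $R$-matrix eigenvalues) are taken as input, the remaining steps are elementary arithmetic; the one subtlety to watch is the normalisation convention, which must be the one for which $(A_1,l)$ is genuinely non-degenerate, so that $(A_1,l)_{\mathrm{pt}}$ appears as a semion-type rather than an $\mathrm{sVec}$-type subcategory. A rank count --- $l+1$ simple objects on the left, $2\cdot\frac{l+1}{2}$ on the right --- serves as a quick sanity check.
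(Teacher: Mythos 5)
Your proof is correct and is essentially the argument behind the cited result: the paper offers no proof of its own (it simply quotes \cite{NWZ}, Section 4), and the standard derivation there is exactly your route --- check that the pointed subcategory generated by $V_l$ is non-degenerate (a semion-type $\mZ_2$ category since $\theta_{V_l}=e^{\pi i l/2}=\pm i$ for odd $l$), compute its centraliser via the monodromy scalar $(-1)^j$ to identify it with $(A_1,l)_{\frac{1}{2}}$, and apply M\"uger's decomposition theorem. Your twist and monodromy computations check out, so nothing further is needed.
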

On the level of Grothendieck rings, this equivalence tells us that the basis set of $\mathrm{Gr((A,l))}$ is equivalent to the direct product of basis sets of $\mathrm{Gr}((A_1,l)_{\frac{1}{2}})$ and $\mathrm{Gr}((A_1,l)_{\frac{1}{2}}) = R(\mZ_2) $, with the ring structure induced component-wise from the ring structures of $\mathrm{Gr}((A_1,l)_{\frac{1}{2}})$ and $R(\mZ_2)$ respectively. 

If we have a NIM-rep $(A,M)$ over $\mathrm{Gr((A,l))}$, we immediately gain a NIM-rep over both $\mathrm{Gr}((A_1,l)_{\frac{1}{2}})$ by restricting along the natural embedding. NIM-reps over the Grothendieck ring $\mathrm{Gr}((A_1,l))$ have been classified in \cite{EK} and are in one-to-one correspondence with simply laced Dynkin diagrams with Coxeter number h=l+2. In the case that $l$ is an odd integer, this gives the only NIM-rep as $\mathrm{Gr}((A_1,l))$ viewed as a NIM-rep over itself. The restriction of this NIM-rep to $\mathrm{Gr}((A_1,l)_{\frac{1}{2}})$ is exactly the single admissible NIM-rep found in \Cref{prop-quantumNIM}.

\newpage
\appendix
\section{Modular invariants and the exponents conjecture for low rank modular tensor categories, joint with Devi Young}
\label{modularinvs}

Here we would like to test a conjecture that relates NIM-reps of modular tensor categories and their so-called modular invariants. Let us first introduce several notions.

\begin{definition}
    Let $\cC$ be a modular tensor category of rank $k$ with modular data $S$ and $T$. A \textit{modular invariant} is a squared matrix $\mathcal{Z}$, with rows and columns labeled by $\mathrm{Ob} \left( \cC \right)$, satisfying:
    \begin{enumerate}
        \item[MI1:] $\mathcal{Z} S=S \mathcal{Z}$ and $\mathcal{Z} T=T \mathcal{Z}$;
        \item[MI2:] $\mathcal{Z}_{\mathrm{ab}} \in \mathbb{N}$, $\forall a,b \in \mathrm{Ob} \left( \cC \right)$; and
        \item[MI3:] $\mathcal{Z}_{00}=1$.
    \end{enumerate}
\end{definition}

The easiest example of a modular invariant is the identity matrix. Indeed, the most interesting ones would be different ones than this one.

\begin{definition}
    Given a modular invariant $M$ associated to a modular tensor category $\cC$ of rank k with $\mathrm{Ob} \left( \cC \right)=\lbrace \one, X_1,X_2,\ldots,X_{k-1} \rbrace$, the \textit{exponent} of $M$ is the multi-set $\mathcal{E}_M$ where $a \in \mathrm{Ob} \left( \cC \right)$ appears with multiplicity $M_{\mathrm{aa}}$. We will denote the exponent as:  $\mathcal{E}_M=\left( \one^{M_{00}},X_1^{M_{11}},\ldots,X_{k-1}^{M_{k-1 k-1}} \right)$.
\end{definition}

Given a certain NIM-rep $N$ in a modular tensor category, observed in the sense of \cites{Gannon,Gannonbook} described in \Cref{rmkNIMreps}, the matrices $N_a$ can be simultaneously diagonalised by a unitary matrix. Each eigenvalue of $N_a$ equals $S_{a,b}/S_{0,b}$, for some $b \in \mathrm{Ob} \left( \cC \right)$. Each eigenvalue corresponds to at most one element in $\mathrm{Ob} \left( \cC \right)$.  This allows us to introduce a second, seemingly different notion of an exponent attached to the modular data:

\begin{definition}
    Let $N$ be a NIM-rep associated to a modular tensor category of rank k with $\mathrm{Ob} \left( \cC \right)=\lbrace \one, X_1,X_2,\ldots,X_{k-1} \rbrace$. The multiset associated to the assignment $a \mapsto N_a$ is defined to be $\mathcal{E}_a \left( N \right) = \left (\one^n_{0}, X_1^{n_1}, ..., X_{k-1}^{n_{k-1}} \right)$, where  $n_b \in \{0,1\}$ is the multiplicity of $S_{a,b}/S_{0,b}$ corresponding to an eigenvalue in $N_a$. This multiset is independent of $a$, so we define the \textit{exponent} of $N$ to be $\mathcal{E}\left( N \right) := \mathcal{E}_a \left( N \right) $.
\end{definition}

These two notions seem to be related in the following way:
\begin{conjecture}
    Consider a rational conformal field theory described by a modular tensor category $\cC$. Then, for every modular invariant $M$ there is a NIM-rep $N$ satisfying that: \begin{equation*}
        \mathcal{E} \left( N \right)=\mathcal{E}_M.
    \end{equation*}
\end{conjecture}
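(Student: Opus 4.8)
The plan is \emph{not} to establish the conjecture in general — it is genuinely open — but to verify it computationally for every unitary modular tensor category of rank at most $4$, relying on the classification of such categories in \cite{RSW} (which, as noted in the introduction, contains representatives of the three families studied in \Cref{NIMrepscalcs}). The overall strategy is a finite case check: for each $\cC$ on the list, produce the complete finite list of modular invariants and the complete finite list of NIM-reps, compute both flavours of exponent, and confirm that every $\mathcal{E}_M$ occurs as some $\mathcal{E}(N)$.

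Concretely, I would first fix the modular data $(S,T)$ of $\cC$ explicitly. Condition MI1 says $\mathcal{Z}$ lies in the commutant of $\{S,T\}$, which is a finite linear system over $\mathbb{Q}$; its solution space is a finite-dimensional $\mathbb{Q}$-vector space, and intersecting it with the non-negativity constraint MI2 and the normalisation MI3 leaves only finitely many matrices $\mathcal{Z}$, which can be enumerated exhaustively. From each one I read off $\mathcal{E}_M = (\one^{M_{00}}, X_1^{M_{11}}, \dots, X_{k-1}^{M_{k-1,k-1}})$. Next I would enumerate the NIM-reps of $\mathrm{Gr}(\cC)$: for the pointed, near-group and $(A_1,l)_{\frac12}$ cases this is already carried out in \Cref{NIMrepscalcs}, and for the remaining rank $\leq 4$ fusion rings it is a bounded search for non-negative integer matrices $\{N_a\}$ satisfying the $\mathbb{Z}_+$-module axioms and the rigidity condition, the search being finite because Frobenius–Perron considerations bound the entries of each $N_a$. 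For each NIM-rep $N$ I then simultaneously diagonalise the commuting family $\{N_a\}$, identify each eigenvalue with the ratio $S_{a,b}/S_{0,b}$ it realises, and record the resulting multiset $\mathcal{E}(N)$. Finally I compare the two finite collections $\{\mathcal{E}_M\}$ and $\{\mathcal{E}(N)\}$ category by category; for the $A(1,l)$ family one can also cross-check against the $\mathrm{ADE}$-type classification of \cite{EK}.

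The main obstacle I expect is twofold. The first part is the exhaustiveness of the NIM-rep search for the categories not covered by the structural results of the paper: one must both bound the matrix entries (via Frobenius–Perron eigenvalue estimates) and argue that no solutions are missed, which is the delicate bookkeeping. The second part is the eigenvalue-to-object correspondence used to define $\mathcal{E}(N)$: when the $S$-matrix has coincidences among the ratios $S_{a,b}/S_{0,b}$ for different simple objects $b$, the multiplicities $n_b \in \{0,1\}$ cannot be read off naively from eigenvalue multiplicities, and one must check that the assignment $a \mapsto \mathcal{E}_a(N)$ is still well defined and independent of $a$ exactly as claimed before the conjecture. Once these two points are handled, the verification reduces to a finite, if lengthy, computation, which is what the appendix carries out.
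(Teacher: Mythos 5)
Your proposal matches the paper's treatment: the statement is left as a conjecture (known to fail in general per \cite{GannonNIM}), and the appendix carries out exactly the finite verification you describe — enumerate modular invariants via the commutant conditions MI1--MI3, enumerate NIM-reps using the classifications of Section \ref{NIMrepscalcs}, and compare exponents category by category for all unitary MTCs of rank at most $4$ from \cite{RSW}. The only point worth adding is the paper's observation that even where the one-directional statement survives, the correspondence is not a bijection at rank $4$ (a NIM-rep of the $\mZ_4$ MTC with no matching modular invariant, and two modular invariants of the toric and $(D_4,1)$ MTCs sharing a single NIM-rep).
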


There is certain evidence that this conjecture is not true in general, see e.g. \cite{GannonNIM}. Still, it is an interesting question to see up to which point it holds, and why it is the case. In what follows we test this conjecture for all unitary modular tensor categories of rank less or equal 4 as classified in \cite{RSW}. For this, we use the NIM-reps we have computed in the present article since they all fall into the families we have classified. Tables \ref{MIrank2MTCs}, \ref{MIrank3MTCs}, \ref{MIrank4MTCs-1}, \ref{MIrank4MTCs-2}, and \ref{MIrank4MTCs-3} contain the modular invariants and NIM-reps associated to each of these categories, as well as their respective associated exponents.

Here, note that the fusion rings of the Ising and $A\left(1,2 \right)$ MTCs, and the fusion rings of the toric and $D\left(1, 4 \right)$ MTCs, are the same. As the NIM-reps do not carry any of the modular data beyond this, the NIM-reps for each of these pair of categories are the same. For the Ising and $A \left(1,2\right)$ pair, the remaining modular data is so similar that we cannot even distinguish them using their modular invariants. This is not the case for the toric and $D\left(1, 4 \right)$ pair.

What we see is that for rank 2,3 MTCs each modular invariant has a corresponding NIM-rep, and the correspondence is true. However, at rank 4 it already starts breaking down: the $\mZ_4$ MTC has a NIM-rep that does not correspond to a modular invariant, and the toric and $D(4,1)$ MTCs each having two modular invariants associated to a single NIM-rep.


\begin{table}[H]
    \centering
    \begin{tabular}{c|c|c||c|c}
         \textbf{Category} & \textbf{Modular} & \textbf{Exponent} & \textbf{NIM-rep} & \textbf{Exponent} \\
         & \textbf{invariant} &&& \\ \hline
         Semion & $\mathrm{Id}$ & $\left( \one^{1},X_1^{1} \right)$ & $X_1 \mapsto \left( \begin{matrix}
             0 & 1 \\ 1 & 0 \end{matrix}\right) $ & $\left( \one^{1},X_1^{1} \right)$\\ \hline
        Fibonacci & $\mathrm{Id}$ & $\left( \one^{1},X_1^{1} \right)$ & $X_1 \mapsto \left( \begin{matrix}
             0 & 1 \\ 1 & 1 \end{matrix}\right) $ & $\left( \one^{1},X_1^{1} \right)$ \\ \hline
    \end{tabular}
    \caption{Modular invariants, NIM-reps and their respective exponents for rank 2 modular tensor categories}
    \label{MIrank2MTCs}
\end{table}

\begin{table}[H]
    \centering
    \begin{tabular}{c|c|c||c|c}
         \textbf{MTC} & \textbf{Modular invariant} & \textbf{Exponent} & \textbf{NIM-rep} & \textbf{Exponent} \\ \hline
       $\mathbb{Z}_3$ & $\mathrm{Id}$ & $\left( \one^{1},X_1^{1},X_2^{1} \right)$ & $X_1 \mapsto \begin{pmatrix}
           0 & 1 & 0 \\ 0 & 0 & 1 \\ 1 & 0 & 0 
       \end{pmatrix}$,  & $\left( \one^{1},X_1^{1},X_2^{1} \right)$ \\ &&&$X_2 \mapsto \begin{pmatrix}
           0 & 0 & 1 \\ 1 & 0 & 0 \\ 0 & 1 & 0
       \end{pmatrix}$& \\ & $\begin{pmatrix}
  1 & 0 & 0\\ 
  0 & 0 & 1\\
  0 & 1 & 0
\end{pmatrix}$ & $\left( \one^{1},X_1^{0},X_2^{0} \right)$ & $X_1, X_2 \mapsto \left( 1 \right)$ & $\left( \one^{1},X_1^{0},X_2^{0} \right)$ \\ \hline
      Ising/$A \left( 1,2\right)$
      & $\mathrm{Id}$ & $\left( \one^{1},X_1^{1},X_2^{1} \right)$ & $X_1 \mapsto \begin{pmatrix}
         0 & 1 & 0 \\ 1 & 0 & 0 \\ 0 & 0 & 1 
      \end{pmatrix}$,  & $\left( \one^{1},X_1^{1},X_2^{1} \right)$ \\ &&& $X_2 \mapsto \begin{pmatrix}
         0 & 0 & 1 \\ 0 & 0 & 1 \\ 1 & 1 & 0 
      \end{pmatrix}$& \\ \hline
    $A( 1,5)_{1/2}$ & $\mathrm{Id}$ &$\left( \one^{1},X_1^{1},X_2^{1} \right)$ & $X_1 \mapsto \begin{pmatrix}
0 & 1 & 0 \\ 1 & 0 & 1 \\ 0 & 1 & 1         
    \end{pmatrix},$ & $\left( \one^{1},X_1^{1},X_2^{1} \right)$ \\ &&&$X_2 \mapsto \begin{pmatrix}
0 & 0 & 1 \\ 0 & 1 & 1 \\ 1 & 1 & 1         
    \end{pmatrix}$& \\\hline
    \end{tabular}
\caption{Modular invariants, NIM-reps and their respective exponents for rank 3 modular tensor categories}
\label{MIrank3MTCs}
\end{table}

\begin{table}[H]
    \centering
    \begin{tabular}{c|c|c||c|c}
         \textbf{MTC} & \textbf{Modular invariant} & \textbf{Exponent} & \textbf{NIM-rep} & \textbf{Exponent} \\  \hline
        $\mathbb{Z}_4$ & $\mathrm{Id}$ & $\left( \one^{1},X_1^{1},X_2^{1},X_3^{1}\right)$ &$X_1 \mapsto \begin{pmatrix}
      0 & 1 & 0 & 0 \\ 1 & 0 & 0 & 0 \\ 0 & 0 & 0 & 1 \\ 0 & 0 & 1 & 0
  \end{pmatrix}$,& $\left( \one^{1},X_1^{1},X_2^{1},X_3^{1}\right)$ \\
 &&&$X_2 \mapsto \begin{pmatrix}
    0 & 0 & 1 & 0 \\ 0 & 0 & 0 & 1 \\ 0 & 1 & 0 & 0 \\ 1 & 0 & 0 & 0
 \end{pmatrix}$,& \\  
 &&&$X_3 \mapsto \begin{pmatrix}
     0 & 0 & 0 & 1 \\ 0 & 0 & 1 & 0 \\ 1 & 0 & 0 & 0 \\ 0 & 1 & 0 & 0
 \end{pmatrix}$& \\
        & $\begin{pmatrix}
  1 & 0 & 0 & 0\\ 
  0 & 1 & 0 & 0\\
  0 & 0 & 0 & 1\\
  0 & 0 & 1 & 0
  \end{pmatrix}$ & $\left( \one^{1},X_1^{1},X_2^{0},X_3^{0} \right)$ &$X_1 \mapsto \begin{pmatrix}
      1 & 0 \\ 0 & 1 
  \end{pmatrix}$,& $\left( \one^{1},X_1^{1},X_2^{0},X_3^{0} \right)$  \\ 
  &&& $X_2, X_3 \mapsto \begin{pmatrix}
      0 & 1 \\ 1 & 0
  \end{pmatrix}$&\\
&&&$X_1, X_2, X_3 \mapsto \left( 1\right),$& $\left( \one^{1},X_1^{0},X_2^{0},X_3^{0} \right)$ \\
  \hline
           $ A(1,7)_{1/2}$ & $\mathrm{Id}$ & $ \left( \one^{1},X_1^{1},X_2^{1},X_3^{1} \right)$ &$X_1\mapsto \begin{pmatrix}
        0 & 1 & 0 & 0 \\ 1 & 0 & 1 & 0 \\ 0 & 1 & 0 & 1 \\ 0 & 0 & 1 & 1
    \end{pmatrix}$,& $ \left( \one^{1},X_1^{1},X_2^{1},X_3^{1} \right)$ \\
    &  & &$X_2\mapsto \begin{pmatrix}
        0 & 0 & 1 & 0 \\ 0 & 1 & 0 & 1 \\ 1 & 0 & 1 & 1 \\ 0 & 1 & 1 & 1
    \end{pmatrix}$,&  \\
    &&&$X_3 \mapsto \begin{pmatrix}
        0 & 0 & 0 & 1 \\ 0 & 0 & 1 & 1 \\ 0 & 1 & 1 & 1 \\ 1 & 1 & 1 & 1
    \end{pmatrix}$& \\ \hline
\end{tabular}

\caption{Modular invariants, NIM-reps and their respective exponents for rank 4 modular tensor categories, $\mathbb{Z}_4$ and $ A(1,7)_{1/2}$ cases}
\label{MIrank4MTCs-1}
\end{table}

   \begin{table}[H]
    \centering
    \begin{tabular}{c|c|c||c|c}
         \textbf{MTC} & \textbf{Modular invariant} & \textbf{Exponent} & \textbf{NIM-rep} & \textbf{Exponent} \\  \hline 
    Toric  & $\mathrm{Id}$ & $\left( \one^{1},X_1^{1},X_2^{1},X_3^{1} \right)$ &$X_1 \mapsto \begin{pmatrix}
      0 & 1 & 0 & 0 \\ 1 & 0 & 0 & 0 \\ 0 & 0 & 0 & 1 \\ 0 & 0 & 1 & 0
  \end{pmatrix}$,& $\left( \one^{1},X_1^{1},X_2^{1},X_3^{1} \right)$ \\ 
  &&&$X_2 \mapsto \begin{pmatrix}
      0 & 0 & 1 & 0 \\ 0 & 0 & 0 & 1 \\ 1 & 0 & 0 & 0 \\ 0 & 1 & 0 & 0
  \end{pmatrix}$,&\\
  &&&$X_3 \mapsto \begin{pmatrix}
      0 & 0 & 0 & 1 \\ 0 & 0 & 1 & 0 \\ 0 & 1 & 0 & 0 \\ 1 & 0 & 0 & 0
  \end{pmatrix}$& \\
  & $\begin{pmatrix}
  1 & 1 & 0 & 0\\ 
  1 & 1 & 0 & 0\\
  0 & 0 & 0 & 0\\
  0 & 0 & 0 & 0
  \end{pmatrix}$ & $\left( \one^{1},X_1^{1},X_2^{0},X_3^{0} \right)$ &$X_1 \mapsto \begin{pmatrix}
      1 & 0 \\ 0 & 1 
  \end{pmatrix}$,& $\left( \one^{1},X_1^{1},X_2^{0},X_3^{0} \right)$ \\
  &&&$X_2,X_3 \mapsto \begin{pmatrix}
      0 & 1 \\ 1 & 0 
  \end{pmatrix}$& \\
   & $\begin{pmatrix}
  1 & 0 & 1 & 0\\ 
  0 & 0 & 0 & 0\\
  1 & 0 & 1 & 0\\
  0 & 0 & 0 & 0
  \end{pmatrix}$ & $\left( \one^{1},X_1^{0},X_2^{1},X_3^{0} \right)$ &$X_2 \mapsto \begin{pmatrix}
      1 & 0 \\ 0 & 1
  \end{pmatrix}$,& $\left( \one^{1},X_1^{0},X_2^{1},X_3^{0} \right)$\\
  &&&$X_1, X_3 \mapsto \begin{pmatrix}
      0 & 1 \\ 1 & 0
  \end{pmatrix}$& \\
    & $\begin{pmatrix}
  1 & 0 & 0 & 0\\ 
  0 & 0 & 1 & 0\\
  0 & 1 & 0 & 0\\
  0 & 0 & 0 & 1
  \end{pmatrix}$ & $\left( \one^{1},X_1^{0},X_2^{0},X_3^{1} \right)$ & $X_3 \mapsto \begin{pmatrix}
      1 & 0 \\ 0 & 1
  \end{pmatrix},$ & $\left( \one^{1},X_1^{0},X_2^{0},X_3^{1} \right)$ \\
  &&&$X_1, X_2 \mapsto \begin{pmatrix}
      0 & 1 \\ 1 & 0
  \end{pmatrix}$& \\
         & $\begin{pmatrix}
  1 & 1 & 0 & 0\\ 
  0 & 0 & 0 & 0\\
  1 & 1 & 0 & 0\\
  0 & 0 & 0 & 0
  \end{pmatrix}$ & $\left( \one^{1},X_1^{0},X_2^{0},X_3^{0} \right)$ &$X_1,X_2,X_3 \mapsto \left( 1 \right)$& $\left( \one^{1},X_1^{0},X_2^{0},X_3^{0} \right)$ \\ 
   & $\begin{pmatrix}
  1 & 0 & 1 & 0\\ 
  1 & 0 & 1 & 0\\
  0 & 0 & 0 & 0\\
  0 & 0 & 0 & 0
  \end{pmatrix}$ & $\left( \one^{1},X_1^{0},X_2^{0},X_3^{0}\right)$ & -- & --  \\
   \hline
\end{tabular}
\caption{Modular invariants, NIM-reps and their respective exponents for rank 4 modular tensor categories, toric case}
\label{MIrank4MTCs-2}
\end{table}

\begin{table}[H]
    \centering
    \begin{tabular}{c|c|c||c|c}
         \textbf{MTC} & \textbf{Modular invariant} & \textbf{Exponent} & \textbf{NIM-rep} & \textbf{Exponent} \\ \hline

  ($D_{4},1$) & Id & $\left(\one^{1},X_1^{1},X_2^{1},X_3^{1}\right)$ &$X_1 \mapsto \begin{pmatrix}
      0 & 1 & 0 & 0 \\ 1 & 0 & 0 & 0 \\ 0 & 0 & 0 & 1 \\ 0 & 0 & 1 & 0
  \end{pmatrix}$,& $\left( \one^{1},X_1^{1},X_2^{1},X_3^{1} \right)$ \\
 &&&$X_2 \mapsto \begin{pmatrix}
    0 & 0 & 1 & 0 \\ 0 & 0 & 0 & 1 \\ 0 & 1 & 0 & 0 \\ 1 & 0 & 0 & 0
 \end{pmatrix}$,& \\  
 &&&$X_3 \mapsto \begin{pmatrix}
     0 & 0 & 0 & 1 \\ 0 & 0 & 1 & 0 \\ 1 & 0 & 0 & 0 \\ 0 & 1 & 0 & 0
 \end{pmatrix}$&   \\
  & $\begin{pmatrix}
  1 & 0 & 0 & 0\\ 
  0 & 1 & 0 & 0\\
  0 & 0 & 0 & 1\\
  0 & 0 & 1 & 0
  \end{pmatrix}$ & $\left(\one^{1},X_1^{1},X_2^{0},X_3^{0}\right)$&$X_1 \mapsto \begin{pmatrix}
      1 & 0 \\ 0 & 1 
  \end{pmatrix}$,& $\left( \one^{1},X_1^{1},X_2^{0},X_3^{0} \right)$ \\
  &&&$X_2,X_3 \mapsto \begin{pmatrix}
      0 & 1 \\ 1 & 0 
  \end{pmatrix}$& \\ 
 & $\begin{pmatrix}
  1 & 0 & 0 & 0\\ 
  0 & 0 & 0 & 1\\
  0 & 0 & 1 & 0\\
  0 & 1 & 0 & 0
  \end{pmatrix}$ & ($\one^{1},X_1^{0}X_2^{1},X_3^{0}$)& $X_2 \mapsto \begin{pmatrix}
      1 & 0 \\ 0 & 1
  \end{pmatrix}$,& $\left( \one^{1},X_1^{0},X_2^{1},X_3^{0} \right)$\\
  &&&$X_1, X_3 \mapsto \begin{pmatrix}
      0 & 1 \\ 1 & 0
  \end{pmatrix}$& \\
   & $\begin{pmatrix}
  1 & 0 & 0 & 0\\ 
  0 & 0 & 1 & 0\\
  0 & 1 & 0 & 0\\
  0 & 0 & 0 & 1
  \end{pmatrix}$ & ($\one^{1},X_1^{0},X_2^{0},X_3^{1}$)&  $X_3 \mapsto \begin{pmatrix}
      1 & 0 \\ 0 & 1
  \end{pmatrix},$ & $\left( \one^{1},X_1^{0},X_2^{0},X_3^{1} \right)$ \\
  &&&$X_1, X_2 \mapsto \begin{pmatrix}
      0 & 1 \\ 1 & 0
  \end{pmatrix}$&  \\
  & $\begin{pmatrix}
 1 & 0 & 0 & 0\\ 
 0 & 0 & 0 & 1\\
 0 & 1 & 0 & 0\\
 0 & 0 & 1 & 0
 \end{pmatrix}$ & ($\one^{1},X_1^{0},X_2^{0},X_3^{0}$)&$X_1,X_2,X_3 \mapsto \left( 1 \right)$& $\left( \one^{1},X_1^{0},X_2^{0},X_3^{0} \right)$\\
 & $\begin{pmatrix}
 1 & 0 & 0 & 0\\ 
 0 & 0 & 1 & 0\\
 0 & 0 & 0 & 1\\
 0 & 1 & 0 & 0
 \end{pmatrix}$ & ($\one^{1},X_1^{0},X_2^{0},X_3^{0}$) & -- & -- \\ \hline
  
    \end{tabular}
      \caption{Modular invariants, NIM-reps and their respective exponents for rank 4 modular tensor categories, ($D_{4},1$) case}
\label{MIrank4MTCs-3}
\end{table}


\bibliography{biblio}
\bibliographystyle{amsrefs}


\end{document}